\documentclass[10pt]{article}

\usepackage[T1]{fontenc} 
\usepackage[utf8]{inputenc}
\usepackage[english]{babel}
\usepackage{amsmath,amsfonts,amssymb, amsthm}
\usepackage{mathrsfs}
\usepackage{enumitem}
\usepackage{wrapfig}
\usepackage{graphicx}
\usepackage{bbold}
\usepackage{bbm}
\usepackage{dsfont}
\usepackage{hyperref}
\usepackage{csquotes}
\usepackage{relsize}
\usepackage{exscale}
\usepackage{float}
\usepackage{empheq}
\usepackage{setspace}
\usepackage[toc,page]{appendix}

\usepackage{subfig}
\usepackage{titlesec}
\usepackage{pdfpages}
\usepackage{mdframed}
\usepackage{listings}

\usepackage[
  paper=letterpaper,
  textwidth=6.25in,
  textheight=9.0in,
  left=1.2in,
  right=1.2in,
  top=1.1in,
  bottom=1.1in
]{geometry}

\allowdisplaybreaks

\titleformat{\paragraph}
{\normalfont\normalsize\bfseries}{\theparagraph}{1em}{}
\titlespacing*{\paragraph}
{0pt}{3.25ex plus 1ex minus .2ex}{1.5ex plus .2ex}

\newcommand{\eps}{\varepsilon}

\newcommand{\mB}{\mathcal{B}}

\newcommand{\mD}{\mathcal{D}}
\newcommand{\mE}{\mathcal{E}}

\newcommand{\mH}{\mathcal{H}}
\newcommand{\mI}{\mathcal{I}}

\newcommand{\mM}{\mathcal{M}}
\newcommand{\mN}{\mathcal{N}}

\newcommand{\mP}{\mathcal{P}}

\newcommand{\mR}{\mathcal{R}}
\newcommand{\mS}{\mathcal{S}}
\newcommand{\mT}{\mathcal{T}}
\newcommand{\mU}{\mathcal{U}}

\newcommand{\mW}{\mathcal{W}}

\newcommand{\NN}{\mathbb{N}}

\newcommand{\PP}{\mathbb{P}}

\newcommand{\RR}{\mathbb{R}}

\newcommand{\A}{\operatorname{A}}
\newcommand{\B}{\operatorname{B}}

\newcommand{\E}{\operatorname{E}}

\renewcommand{\P}{\operatorname{P}}
\newcommand{\Q}{\operatorname{Q}}

\newcommand{\U}{\operatorname{U}}

\newcommand{\X}{\operatorname{X}}
\newcommand{\Y}{\operatorname{Y}}

\renewcommand{\k}{\kappa}

\newcommand{\wh}{\widehat}
\newcommand{\ol}{\overline}

\newcommand{\Par}{\operatorname{Par}}
\newcommand{\Leb}{\operatorname{Leb}}

\newcommand{\supp}{\operatorname{supp}}

\newcommand{\card}{\operatorname{Card}}

\newcommand{\Var}{\operatorname{Var}}

\newcommand{\KL}{\operatorname{KL}}

\newcommand{\sY}{\mathrm{Y}}
\newcommand{\sX}{\mathrm{X}}

\newcommand{\ul}{\underline}

\SetEnumerateShortLabel{i}{\textit{\roman}}

\newtheorem{theorem}{Theorem}
\newtheorem{lemma}[theorem]{Lemma}
\newtheorem{proposition}[theorem]{Proposition}
\newtheorem{corollary}[theorem]{Corollary}
\newtheorem{definition}[theorem]{Definition}

\newtheorem{example}[theorem]{Example}

\newtheorem{remark}[theorem]{Remark}

\makeatletter
\@addtoreset{proofpart}{theorem}
\makeatother

\usepackage[backend=biber, maxnames=99]{biblatex}
\title{A Minimax Theory of Nonparametric \\Regression Under Covariate Shift}
\author{\large{Petr Zamolodtchikov}\vspace{.15cm}\\ Universit\"at Bielefeld\vspace{.05cm}\\
pzamolod@math.uni-bielefeld.de}
\date{\today}

\begin{document}

\maketitle

\begin{abstract}
    We consider nonparametric regression under covariate shift, where we observe samples from both the target distribution and a related but distinct source distribution. We introduce a novel object, the transfer function, and show that properties of its domain determine our minimax rates. Those exhibit a variety of regimes, including classical rates, governed by the better of source-only and target-only rates, as well as regimes in which the convergence rates exhibit multiplicative interactions between the sample sizes and are faster than the best-of-two benchmark. The rates are shown to be achieved up to logarithmic factors by a design-adaptive estimator. Compared with existing theory, our results cover the case in which covariates have unbounded support.
\end{abstract}

\begingroup
\renewcommand{\thefootnote}{}
\footnotetext{\textit{keywords:} Covariate Shift, Minimax, Nonparametric Regression, Transfer Learning, Unbounded Support.}
\endgroup

\section{Introduction}
\label{sec.intro}

Transfer learning is a widely used approach to improving the performance of a predictive model by incorporating related data into its training set. The central premise is that different datasets may contain information about the task at hand, which the model extracts and leverages, leading to numerous successful implementations over the years. Examples include computer vision \cite{li2020transfer,wang2022transfer}, healthcare \cite{ali2021enhanced,ebbehoj2022transfer}, natural language processing \cite{ruder2019transfer}, and experimental particle physics \cite{mokhtar2025fine, camaiani2022model}. Beyond the natural incentive to improve model performance, transfer learning is particularly interesting when relevant data are scarce.\\

In the setting of supervised learning, transfer learning aims to design algorithms that perform well on a \textit{target} distribution $\Q\mathstrut_{\!\X, \sY}$ while being fit on a dataset either comprised of $n$ samples from a \textit{source} distribution $\P_{\sX, \sY},$ or a mix of both $n$ samples from the source and $m$ samples from the target. Most of classical statistical learning theory had been derived under the assumption of distributional invariance between training and test datasets, that is $\P_{\sX, \sY} = \Q\mathstrut_{\!\sX, \sY},$ and, therefore, lacks the ability to explain, predict, and control transfer learning in practice. Hence, transfer learning is an active area of research \cite{zhu2025}. The theoretical literature has largely crystallised around two mathematically distinct, yet complementary, paradigms arising from distributional shifts. The first is called \textit{posterior drift} and refers to the setting in which the covariates' marginals remain the same, $\P_{\sX} = \Q\mathstrut_{\!\sX},$ but the conditional distributions of the outputs vary across tasks, $\P_{\sY|\sX} \neq \Q\mathstrut_{\!\Y\!|\sX}.$ See \cite{auddy2025minimax, cai2024transfer} and references therein. This paper addresses \textit{covariate shift} (CS) \cite{shimodaira2000improving, sugiyama2007covariate}, which is the second setting where only the covariates' marginals vary, $\P_{\sX} \neq \Q\mathstrut_{\!\sX},$ while the output's conditionals remain equal, $\P_{\sY|\sX} = \Q\mathstrut_{\!\Y\!|\sX}.$ \\

The main theoretical goal of analysing CS is two-fold. Firstly, one aims to identify the correct object that describes and quantifies transferability by modulating convergence rates. Secondly, one aims to design algorithms that optimally adapt to distributional shifts. In this article, transferability is quantified by the notion of \textit{transfer function}, a $(\P_{\sX}, \Q\mathstrut_{\!\sX})$-dependent univariate function whose analytic behaviour encodes transferability. More precisely, the latter diverges at the boundary of its domain, and the location of this boundary governs achievable minimax rates. The second goal is achieved, up to logarithmic factors, by a design-adaptive $k$-nearest-neighbours regression estimator.\\

Quantification of transfer is a central topic in the CS literature. In the linear model, transfer is naturally described by the interactions of the source and target covariance matrices \cite{mallinar2024minimum, song2024generalization}. A dominant approach for empirical risk minimisers (ERM) addresses CS by re-weighting the empirical risk, thereby encoding transferability via moment assumptions about the density ratio \cite{ma2023optimally, gogolashvili2023importance, chen2024high, xu2025estimating, liu2025spectral, della2025computational, feng2024deep}.\\

The line of work we now extend has departed from density-ratio-type assumptions and focused on geometric-type regularity of the source-target pair. Notably, in the classification setting, the pioneering work \cite{kpotufe2021marginal} introduced the \textit{transfer exponent}, which measures the relative singularity of the source-target pair via the growth of their ball-mass ratios; see also \cite{cai2024transferbandit}. In the same spirit, and in the context of nonparametric regression, \textit{$\alpha$-families} of source-target pairs were introduced in \cite{pathak2022new} and \cite{trottner2024covariate}.\\

In the nonparametric setting, typical convergence rates have the form $(n^{r_S} + m^{r_T})^{-1} \asymp (n^{r_S} \vee m^{r_T})^{-1},$ which we call the \textit{wedge rate} and identify as the minimum of a \textit{source rate} and a \textit{target rate}. The wedge rate almost always corresponds to the convergence rate of an estimator that selects the best between two separate estimators trained solely on $n$ source samples and $m$ target samples, respectively.\\

In the context of $\alpha$-families, Nadaraya-Watson estimators with fixed bandwidths are minimax optimal. However, in \cite{schmidt2022local}, it was shown that the unpenalised least squares estimator exhibits much finer transfer properties, comparable to those of a Nadaraya-Watson estimator with pointwise-optimal, or design-adaptive, bandwidth. This led to faster rates for power-to-uniform transfer, where $\P_{\sX}$ is a power law and $\Q\mathstrut_{\!\sX}$ is uniform, and motivated the study of design-adaptive estimators in \cite{zamolodtchikov2024transfer}, where a local $k$-NN was first analysed. Additionally, \cite{schmidt2022local} identifies a prototypical instance of multiplicative rates. The recent preprint \cite{zhou2025synergistic} establishes a lower bound on the integrated convergence rates of power-to-uniform covariate shift under general smoothness, which provides finer insights into the multiplicative regime.\\

These observations reveal gaps in existing minimax theories pertaining to CS in the nonparametric regression setting. Firstly, the class of source-target pairs in \cite{pathak2022new} incorporates very irregular distributions. Those constitute the hard cases and lead to a theory that smooths away finer mechanisms, such as multiplicative rates. Secondly, transfer exponents and $\alpha$-families break down as soon as the design becomes unbounded. Indeed, both quantities then become infinite. \\

The approach via transfer functions leads to a simple yet expressive minimax theory of nonparametric regression under CS. The result is a single scalar parameter per occurring distribution that governs minimax rates. The analysis is carried out over a restricted class of source-target pairs whose local mass is sufficiently well-behaved, enabling our rates to capture the subtle mechanics of transfer. In particular, our approach yields a rich atlas of clearly identified regimes. As discussed in Section \ref{sec.4.3}, the restriction isolates a clean phenomenon that is structurally robust, and its principled weakening should only broaden the theory. The hard cases are very regular Pareto source-target pairs, a feature of a realistic minimax theory. The transfer function remains meaningful independently of the boundedness, or lack thereof, of the covariate support. Since all the rates are new, our theory also contributes to nonparametric regression with unbounded covariate support.\\

\noindent \textbf{Paper organisation.} We introduce the statistical model, the transfer function and our working assumptions in Section \ref{sec.2}. We then state and discuss our main results in Section \ref{sec.3}. Section \ref{sec.4} is dedicated to discussing our notion of transfer function, the design of our estimator, and the necessity or lack thereof of our working assumptions. All the proofs are deferred to the appendix.\\

\noindent\textbf{Notations.}
We use the notation $a_n \lesssim b_n$ to denote that there exists a positive constant $c$ such that $a_n \leq c b_n$, for all $n \in \NN$. $a_n \gtrsim b_n$ is defined in a similar way, and we denote $a_n \asymp b_n$ whenever $a_n \lesssim b_n$ and $a_n \gtrsim b_n.$
For any $x \in \RR^d$ and $r > 0$, we denote by $\B(x,r)$ the closed ball centred in $x$ with radius $r$ in the Euclidean norm denoted by $\|\cdot\|.$ The infinite norm of a function $f$ is denoted by $\|f\|_{\infty}.$ 

\section{Preliminaries}\label{sec.2}

\noindent\textbf{Nonparametric regression model.} We consider the standard nonparametric regression model. Letting $(X, Y)$ be a pair of random variables with values in $\RR^d \times \RR,$ we assume the existence of a function $f_* \colon \RR^d \to \RR$ satisfying
\begin{align}
    \label{eq.model}
    Y = f_*(X) + \eps,
\end{align}
where $\eps$ is a centred random variable, independent from $X.$ We additionally assume that $\eps$ is sub-exponential with parameters $\alpha, \nu >0.$ That is, for all $|\lambda| \leq \alpha^{-1}$
\begin{align*}
    \E[\exp(\lambda \eps)] \leq \exp\Big(\frac{\nu^2\lambda^2}{2}\Big). 
\end{align*}
Moreover, we will assume that $f_*$ lies in a H\"older ball $\mH(L, \beta)$ defined as 
\begin{align*}
    \mH(L, \beta) := \bigg\{f \colon \RR^d \to \RR: \|f\|_{\infty} + \sup_{\substack{x, y \in \RR^d\\ x\neq y}} \frac{|f(x) - f(y)|}{\|x - y\|^\beta} \leq L\bigg\},
\end{align*}
where $L > 0$ and $\beta \in (0, 1]$ are fixed constants.\\

\noindent\textbf{Covariate shift.} In the setting of Covariate Shift (CS), we are given a sample of $m$ observations from the target distribution $\Q\mathstrut_{\!\X, \sY}$ and $n$ observations from the source distribution $\P_{\sX, \sY}$ with the constraint that they agree when conditioned on the covariates,
\begin{align}
\label{eq.covariate.shift}
\P_{\sY|\sX} = \Q\mathstrut_{\!\Y\!|\sX}. \tag{CS}
\end{align}
Under \eqref{eq.covariate.shift}, if $(X, Y)$ satisfies Model \eqref{eq.model} under the source (resp.\ the target) distribution, then the exact same model is satisfied under the target (resp.\ the source) distribution. This means that the regression function $f_*$ and the noise variable's distribution are the same under $\P_{\sX, \sY}$ and $\Q\mathstrut_{\!\X, \sY}.$ The mismatch takes effect on the level of marginal distributions $\P_{\sX}$ and $\Q\mathstrut_{\!\sX}.$\\

\noindent\textbf{Prediction loss.} Given a sample $\mD_{\P} = \{(X_i, Y_i)\}_{i=1}^n$ of i.i.d.\ observations from $\P_{\sX, \sY}$ and an additional independent sample $\mD_{\Q} = \{(X_j', Y_j')\}_{j=1}^m$ of i.i.d.\ observations from $\Q\mathstrut_{\!\X, \sY},$ the predictive goal is to construct an estimator $\smash{\wh f}(\cdot, \mD_{\Q}, \mD_{\Q})$ that minimises the Mean Squared Error $\ell(\smash{\wh f}\,) := \E_{\Q\mathstrut_{\!\X, \sY}}[(\smash{\wh f}(X') - Y')^2|\mD_{\Q}, \mD_{\Q}].$ Under Model \eqref{eq.model}, this is equivalent to estimating $f_*$ in the $L^2(\Q\mathstrut_{\!\sX})$ loss 
\begin{align*}
    \ell(\wh f\,) - \ell(f_*) = \E_{X' \sim \Q\mathstrut_{\!\sX}}\big[(\wh f(X') - f_*(X'))^2\big] = \big\|\wh f - f_*\big\|_{L^2(\Q\mathstrut_{\!\sX})}^2.
\end{align*} 
Under Model \eqref{eq.model}, the distribution of $Y$ is entirely determined by the distribution of the covariate $X,$ the regression function $f_*,$ and the distribution of $\eps,$ meaning that specifying joint distributions of $(X, Y)$ is redundant. Consequently, we will call $\P_{\sX}$ the source distribution and $\Q\mathstrut_{\!\sX}$ the target distribution in the rest of this article.\\

\noindent\textbf{Transfer function.} Within \eqref{eq.covariate.shift} and Model \eqref{eq.model}, it is clear that minimax estimation rates depend on the source-target pair $(\P_{\sX}, \Q\mathstrut_{\!\sX}),$ and, as mentioned in Section \ref{sec.intro}, a key step is to identify and formalise a quantity that captures this dependence. Here, this dependence is quantified through the behaviour of the \textit{transfer function} and its domain. Let us first introduce $\mM$ as the class of Borel probability measures on $\RR^d$ that admit a Lebesgue density.
\begin{definition}[Transfer function]
    For a pair of distributions $\P, \Q \in \mM,$ and a real number $\gamma \in [0, \infty),$ the transfer function is defined as
    \begin{align*}
        \mT(\P, \Q, \gamma) := \E_{X \sim \Q}[p(X)^{-\gamma}] \in [0, \infty],
    \end{align*}
    where $p$ and $q$ are the respective densities of $\P$ and $\Q.$
\end{definition}
Intuitively, $\mT(\P, \Q, \cdot)$ measures the quantity of mass that $\Q$ assigns to low-density regions of $\P.$ For fixed $\P, \Q,$ the transfer function is a $\log$-convex function of $\gamma$ that typically blows up to infinity once its argument reaches the boundary of its domain. This leads to the definition of integrability indices.
\begin{definition}[Integrability index]
    For $\P, \Q \in \mM,$ we define the integrability index of $(\P, \Q)$ as
    \begin{align*}
        \gamma^*(\P, \Q) := \sup\{\gamma \geq 0: \mT(\P, \Q, \gamma) < \infty\} \in [0, \infty].
    \end{align*}
\end{definition}
Further properties of the transfer function and integrability indices are discussed in Section \ref{sec.4.1}.\\

\noindent\textbf{Regularity.} The derivations in our results rely on additional regularity assumptions that restrict the class of covariates' distributions in the scope of this paper. First, denote by $\mM(D)$ the class of measures in $\mM$ whose Lebesgue density is upper bounded by the constant $D>0,$ which is arbitrary but fixed.
\begin{definition}
\label{def.class.proba}
Let $0 < \theta < \infty.$ We define $\mP(D, \theta)$ to be the class of distributions $\P \in \mM(D),$ with density $p,$ such that for all $x \in \supp(\P)$ and all $r \in (0, 1],$
\begin{align}
    \label{eq.minimal.maximal.mass}
    \theta^{-1}p(x)r^d \leq \P\{\B(x, r)\} \leq \theta p(x)r^d.
\end{align}
\end{definition}
We will refer to \eqref{eq.minimal.maximal.mass} as the \textit{local mass} assumption. Definition \ref{def.class.proba} is satisfied for uniform distributions, or distributions with densities bounded away from zero and infinity, which is one of the most standard settings for nonparametric regression. Standard light-tailed distributions such as Gaussians do not satisfy \eqref{eq.minimal.maximal.mass}, while heavier-tailed distributions such as Pareto and exponential do (see Examples \ref{ex.pareto} and \ref{ex.exponential}). If the density $p$ of $\P$ cancels at $x_0 \in \RR^d,$ and $p(x)$ is proportional to $\|x - x_0\|^\alpha$ in a neighbourhood of $x_0$ and some positive $\alpha,$ then $\P$ does not satisfy \eqref{eq.minimal.maximal.mass}. We further discuss Definition \ref{def.class.proba} and its impact on our results in Section \ref{sec.4.3}

\section{Main Results} \label{sec.3}
In this section, we state our upper and lower bounds before giving an analysis of the resulting rate exponents. 
We work under the setting of Model \eqref{eq.model} under \eqref{eq.covariate.shift}. We fix $D, \theta > 0.$ We will always denote the source distribution by $\P_{\sX}$ and the target distribution by $\Q\mathstrut_{\!\sX}.$ As such, we also simplify notations and denote by $\gamma^* := \gamma^*(\P_{\sX}, \Q\mathstrut_{\!\sX})$ and by $s^* := \gamma^*(\Q\mathstrut_{\!\sX}, \Q\mathstrut_{\!\sX}).$ For readability, we also denote by $r_\beta := \tfrac{2\beta}{2\beta + d}.$

\subsection{Learnability and impact of covariate shift}
\label{sec.3.1}

In what follows, we abstract away the specific expressions of the multiplicative constants in the upper and lower bounds, retaining only their essential dependencies on the parameters. Explicit expressions can be found in the more precise statements in Appendix \ref{app.main.results}, see Theorems \ref{th.transfer.local.knn.proof}, and \ref{th.rates.local.k.two.sample.proof}. We now state our main results. On the one hand, we provide an upper bound for the supremum risk in probability over the class $\mH(L, \beta),$ which is valid pointwise for any source-target pair. On the other hand, we provide a lower bound on the supremum risk in expectation over $\mH(L, \beta)$ as well as classes of bounded transferability indices.
\begin{theorem}[Upper bound]
    \label{th.upper.bound.two.sample}
    Let $\P_{\sX}, \Q\mathstrut_{\!\sX} \in \mP(D, \theta).$ Then, there exists an estimator $\wh f,$ an integer $N = N(\theta, D)$ and a constant $C(\tau) = C(\tau, d, \beta, \theta, \gamma^*, s^*)$, such that for all $\tau > 1,$ all $m\wedge n \geq N,$ all $\gamma \in [0, \gamma^*)$ and all $s \in [0, s^*),$
    \begin{align*}
        \sup_{f_* \in \mH(L, \beta)}\PP\Bigg\{\big\|\wh f - f_*\big\|_{L^2(\Q\mathstrut_{\!\sX})}^2 \geq C(\tau)\mR(n, m, \gamma, s)\Bigg\} \leq 3(n^{1-\tau} + m^{1-\tau}),
    \end{align*}
    where the rate $\mR = \mR(n, m, \gamma, s)$ is defined as
    \begin{align*}
        \mR &= \begin{dcases}
            \mT_{\Q\mathstrut_{\!\sX}}(\P_{\sX}, \gamma)^{\tfrac{r_\beta - s}{\gamma - s}}\mT_{\Q\mathstrut_{\!\sX}}(\Q\mathstrut_{\!\sX}, s)^{\tfrac{\gamma - r_\beta}{\gamma - s}}\Big(\frac{\log(nm)}{n}\Big)^{\gamma \tfrac{r_\beta - s}{\gamma - s}}\Big(\frac{\log(nm)}{m}\Big)^{s\tfrac{\gamma - r_\beta}{\gamma - s}} &\!\!\! \vcenter{\hbox{\shortstack[l]{if $(\gamma-r_\beta)(s-r_\beta)<0$\\and $m\in[n,n^{\gamma/s}]$,}}}\\
            \Big[\mT_{\Q\mathstrut_{\!\sX}}(\P_{\sX}, \gamma \wedge r_\beta)\Big(\frac{\log(nm)}{n}\Big)^{\gamma \wedge r_\beta}\Big]\wedge \Big[\mT_{\Q\mathstrut_{\!\sX}}(\Q\mathstrut_{\!\sX}, s \wedge r_\beta)\Big(\frac{\log(nm)}{m}\Big)^{s\wedge r_\beta}\Big] &\!\!\!\text{ otherwise.}
        \end{dcases}
    \end{align*}
\end{theorem}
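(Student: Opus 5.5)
The plan is to build a design-adaptive local $k$-NN estimator on the pooled sample. Its key feature is that the neighbourhood size varies with the query point, so that the bias--variance tradeoff is solved locally. Concretely, at a query point $x$ we choose $k(x)$ by a Lepski-type or balancing rule so that the radius $r_k(x)$ of the $k(x)$ nearest pooled covariates satisfies $r_k(x)^{2\beta}\asymp \log(nm)/k(x)$. The estimate $\wh f(x)$ is then the average of the responses of those neighbours.

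The first step is a pointwise oracle inequality. Conditionally on the design, the bias is at most $L r_k(x)^\beta$, and by sub-exponential Bernstein plus a union bound over the at most $n+m$ candidate values of $k$ and over the design points, the stochastic term is at most $\sqrt{\tau\log(nm)/k}$ with probability $1-3(n^{1-\tau}+m^{1-\tau})$. Combining these gives $|\wh f(x)-f_*(x)|^2\lesssim \min_k\{r_k(x)^{2\beta}+\log(nm)/k\}$.

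The second step converts the neighbour radius into densities via the local mass assumption \eqref{eq.minimal.maximal.mass}. With high probability, uniformly, the ball $\B(x,r)$ contains of order $n\P_{\sX}\{\B(x,r)\}+m\Q\mathstrut_{\!\sX}\{\B(x,r)\}\asymp (np(x)+mq(x))r^d$ pooled points as long as this count exceeds $\log(nm)$. The constraint $r\le 1$ is what forces $N(\theta,D)$, together with a cap $r\wedge 1$ in low-density regions, where the error is simply bounded by $L^2$. Optimising over $r$ yields the pointwise risk
\[
|\wh f(x)-f_*(x)|^2\lesssim 1\wedge\Big(\frac{\log(nm)}{np(x)+mq(x)}\Big)^{r_\beta}\le \Big(\frac{\log(nm)}{np(x)}\Big)^{a}\wedge\Big(\frac{\log(nm)}{mq(x)}\Big)^{b}
\]
for any exponents $a,b\in[0,r_\beta]$, where we use $1\wedge t^{r_\beta}\le t^{a}$. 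A point needs care here: points $x$ in the support of $\Q\mathstrut_{\!\sX}$ but outside the support of $\P_{\sX}$ have $p(x)=0$, which is consistent with the formula above. The local mass property is only used for the measure whose support contains $x$.

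The third step integrates against $\Q\mathstrut_{\!\sX}$. In the ``otherwise'' case we take either $a=\gamma\wedge r_\beta$, which directly gives $\mT(\P_{\sX},\Q\mathstrut_{\!\sX},\gamma\wedge r_\beta)(\log(nm)/n)^{\gamma\wedge r_\beta}$, or the symmetric choice with $q$ and $s$. The minimum of the two follows because the same realisation of the estimator satisfies both bounds. In the multiplicative regime, say $s<r_\beta<\gamma$ with $m\in[n,n^{\gamma/s}]$, we instead use the pointwise minimum $\min(A^\gamma,B^s)$ with $A=\log(nm)/(np)$ and $B=\log(nm)/(mq)$, which is valid since $\min(1,\cdot)$ caps everything. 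We then apply the interpolation $\min(u,v)\le u^{\lambda}v^{1-\lambda}$ with $\lambda=(r_\beta-s)/(\gamma-s)$, so that $\lambda\gamma+(1-\lambda)s=r_\beta$. The quantity to integrate becomes $p^{-\gamma\lambda}q^{-s(1-\lambda)}$ times the rate factors. By Hölder's inequality with exponents $1/\lambda$ and $1/(1-\lambda)$ under $\Q\mathstrut_{\!\sX}$, its integral is at most $\mT(\P_{\sX},\Q\mathstrut_{\!\sX},\gamma)^{\lambda}\mT(\Q\mathstrut_{\!\sX},\Q\mathstrut_{\!\sX},s)^{1-\lambda}$, which is exactly the stated expression. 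The constraint on $m$ only identifies when this product beats the wedge rate, so the bound holds regardless. It must however be checked that the needed pointwise bound $1\wedge(\cdot)^{r_\beta}\le A^{\gamma}\wedge B^{s}$ is legitimate when $\gamma>r_\beta$. This is fine: $A^{r_\beta}$ and $1$ sandwich $A^\gamma$ whenever $A\le1$ and whenever $A\ge1$.

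The main obstacle is the second step. It requires uniform-in-$x$ concentration of neighbour counts over an unbounded support, with only a union bound and a VC argument over balls available, while keeping the logarithmic factor at $\log(nm)$, and it must be reconciled with the data-driven choice of $k$. I would first handle this by a VC bound for Euclidean balls (the relative deviation form), which gives $\log(nm)$ uniformly.
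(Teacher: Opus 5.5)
Your route differs from the paper's and works once one step is repaired. The paper does not pool the samples. It builds separate neighbour counts $k_{\P}(x)$ and $k_{\Q}(x)$ from plug-in $\ell$-NN density estimates with $\ell\asymp\log(nm)$, and bounds the pointwise error by a convex combination (Jensen) of the two one-sample errors. It then splits $\RR^d$ into four regions according to whether $p\gtrless C\ell/n$ and $q\gtrless C\ell/m$, treating each region separately.

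Your single pooled $k$-NN with a radius-balancing rule gives the pointwise oracle bound $1\wedge t(x)^{r_\beta}$ directly, where $t(x):=\log(nm)/(np(x)+mq(x))$. Integration then takes two lines and needs no region bookkeeping. Because you never estimate densities, you only use the lower half of \eqref{eq.minimal.maximal.mass}. The paper needs the upper half to keep $\wh p/p$ bounded above, and that is what controls its plug-in bias term.

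Your route does require three things you only sketch:
\begin{itemize}
\item a uniform relative-deviation bound for ball counts, proved for each sample separately and then added;
\item the restriction $k\ge\lceil\log(nm)\rceil$, so that the sub-exponential deviation is of order $\sqrt{\log(nm)/k}$;
\item for uniformity over $x\in\RR^d$ in Step 1, a polynomial bound on the number of distinct nearest-neighbour sets (via VC of balls, or the Biau--Devroye cell count the paper uses), not just a union bound over design points.
\end{itemize}

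The step that fails as written is the multiplicative regime. For $\gamma>r_\beta$ the inequality $1\wedge t^{r_\beta}\le A^{\gamma}$ is false. If $A<1$ then $A^\gamma<A^{r_\beta}$, so your ``sandwich'' goes the wrong way. Concretely, take $np(x)=100\log(nm)$, $mq(x)=\log(nm)$, $\gamma=1$, $r_\beta=1/2$ and $s=1/4$. Then $1\wedge t^{r_\beta}\approx 0.0995$, while $A^\gamma\wedge B^s=0.01$. So you cannot pass through $\min(A^\gamma,B^s)$.

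The fix is to interpolate at the level of $t$ itself. Set $\lambda=(r_\beta-s)/(\gamma-s)\in[0,1]$. Then $\gamma\lambda+s(1-\lambda)=r_\beta$ with both exponents nonnegative, and since $t\le A$ and $t\le B$,
\[
1\wedge t^{r_\beta}\le t^{\gamma\lambda}\,t^{s(1-\lambda)}\le A^{\gamma\lambda}B^{s(1-\lambda)}.
\]
This is exactly the integrand you then treat with H\"older at exponents $1/\lambda$ and $1/(1-\lambda)$. It is also what the paper does on its bulk region, writing the minimum of the two $r_\beta$-powers as a geometric mean with weights $\gamma\lambda/r_\beta$ and $s(1-\lambda)/r_\beta$. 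With this replacement your Step 3 is correct. As you note, the resulting bound then holds for all $(n,m)$ whenever $r_\beta$ lies between $s$ and $\gamma$.
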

The pointwise nature of the result is desirable here: conditionally on the regularity imposed by $\mP(D, \theta),$ the rate depends on the particular source-target pair only through the transfer function. It is important to note that the estimator in Theorem \ref{th.upper.bound.two.sample} does not depend on the particular choice of source-target pair, nor on any knowledge of the transferability indices $(\gamma^*, s^*).$ It depends, however, on the knowledge of $2\beta/(2\beta + d).$ In fact, we design a \textit{local} $k$-nearest neighbours regressor and discuss its construction in Section \ref{sec.4.2}. Theorem \ref{th.upper.bound.two.sample} identifies conditions on $(\gamma, s, r_\beta)$ for which a special regime \textit{might} emerge, where the rate becomes multiplicative. This situation occurs when $(\gamma - r_\beta)(s - r_\beta) < 0$. In this setting, the rates become multiplicative if, in addition, $m \in [n, n^{\gamma/s}].$ The latter interval is understood as $[n^{\gamma/s}, n]$ if $\gamma/s < 1$ and $[n, n^{\gamma/s}]$ when $\gamma/s > 1.$ As a result, the convergence rates depend on five parameters $(\gamma^*, s^*, r_\beta, n, m).$\\

It is noteworthy that the double role of $\Q\mathstrut_{\!\sX}$ as both (part of) the data-generating process, and the testing distribution in Theorem \ref{th.upper.bound.two.sample} is arbitrary. A careful reading of the proofs indicates that the same result holds if we instead consider that the data consists of $n$ samples from $\P_{\sX}^{(1)}$ and $m$ samples from $\P_{\sX}^{(2)}$ while keeping $\Q\mathstrut_{\!\sX}$ as the testing distribution. In this case, the relevant parameters in the rates' exponents are $\gamma^*(\P_{\sX}^{(1)}, \Q\mathstrut_{\!\sX})$ and $\gamma^*(\P_{\sX}^{(2)}, \Q\mathstrut_{\!\sX}).$ This observation suggests natural extensions to multitask estimation \cite{blanchard2024estimation} or domain generalisation \cite{blanchard2021domain}. \\

We now state a more specialised result that corresponds formally to the case $m = 0$ in Theorem \ref{th.upper.bound.two.sample}, in which the training data is sampled from the source distribution only.
\begin{corollary}[Upper bound, pure transfer]
    \label{cor.upper.bound.one.sample.P}
    Let $\P_{\sX}, \Q\mathstrut_{\!\sX} \in \mP(D, \theta).$ Then, there exists an estimator $\wh f,$ an integer $N = N(\theta, D) \geq 2d\vee 3,$ and a constant $C(\tau) = C(\tau, d, \beta, \theta, \gamma^*)$, such that for all $\tau > 1,$ all $n \geq N,$ and all $\gamma \in [0, \gamma^*),$
    \begin{align*}
        \sup_{f_* \in \mH(L, \beta)}\PP\Big\{\big\|\wh f - f_*\big\|_{L^2(\Q\mathstrut_{\!\sX})}^2 \geq C(\tau)\mT(\P_{\sX}, \Q\mathstrut_{\!\sX}, \gamma\wedge r_\beta)\Big(\frac{\log n}{n}\Big)^{\gamma \wedge r_\beta}\Big\} \leq 3n^{1-\tau}.
    \end{align*}
\end{corollary}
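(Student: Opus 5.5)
We prove the corollary directly, with a one-sample analogue of the local $k$-NN estimator of Section~\ref{sec.4.2} built from the $n$ source points only. We cannot simply plug $m=0$ into Theorem~\ref{th.upper.bound.two.sample}, so we rerun its argument without the target sample.

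\textbf{Estimator.} At each query point $x$ we choose a data-driven number of neighbours
\[
k(x)\asymp \log n \cdot \big(n/\log n\big)^{r_\beta}\big(\hat p(x)\big)^{r_\beta}\wedge n,
\]
where $\hat p(x)$ is a local density proxy, for instance $k_0/(n\,\hat R_{k_0}(x)^d)$ with $k_0\asymp\log n$. We then average the $Y_i$ of the $k(x)$ nearest neighbours. Equivalently, $k(x)$ is the largest $k$ balancing the bias $R_k(x)^{2\beta}$ against the variance $\log n/k$.

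\textbf{Step 1: uniform localisation.} Using the local mass assumption \eqref{eq.minimal.maximal.mass} together with a Bernstein/VC argument over balls, we show that with probability at least $1-n^{1-\tau}$ the following holds simultaneously for all $x\in\supp(\P_{\sX})$ and all $k\ge C\tau\log n$:
\[
R_k(x)^d\asymp \frac{k}{n\,p(x)} \qquad\text{whenever } k/(np(x))\le 1 .
\]
The restriction to $r\le1$ comes from Definition~\ref{def.class.proba}. Regions where $np(x)\lesssim\log n$ are declared \emph{bad}.

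\textbf{Step 2: pointwise error on good points.} On the good set, conditionally on the design, we bound bias and noise separately. The bias is at most $L R_k(x)^\beta$. The noise is controlled by sub-exponential concentration, giving $\sqrt{\tau\log n/k}$ up to a union bound over the polynomially many distinct neighbour sets. This union bound contributes the second $n^{1-\tau}$ term. With the balanced choice of $k(x)$ we obtain
\[
|\wh f(x)-f_*(x)|^2\lesssim \Big(\frac{\log n}{n\,p(x)}\Big)^{r_\beta}\wedge 1 .
\]
On bad points we only use $|\wh f-f_*|^2\lesssim L^2+\text{noise}\lesssim 1$, which follows from boundedness of $f_*$ and a sub-exponential maximum bound; this is the third exceptional event.

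\textbf{Step 3: integration against $\Q_{\sX}$.} Fix $\gamma<\gamma^*$ and set $g=\gamma\wedge r_\beta$. Since $u\wedge1\le u^{g}$ for $u\ge0$ and $g\le r_\beta$,
\[
\Big(\frac{\log n}{np(x)}\Big)^{r_\beta}\wedge 1\le\Big(\frac{\log n}{np(x)}\Big)^{g}.
\]
Integrating over $\Q_{\sX}$ gives $(\log n/n)^{g}\,\mT(\P_{\sX},\Q_{\sX},g)$. This is finite because $\mT$ is log-convex with $\mT(\cdot,\cdot,0)=1$, so $\mT$ is finite on $[0,\gamma^*)$.

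Points outside $\supp(\P_{\sX})$ need separate treatment. Any target mass there has $p=0$, which forces $\mT=\infty$ for every $\gamma>0$. So either $\gamma^*=0$, in which case the bound is the trivial $O(1)$ rate, or that mass is zero.

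The constant depends on $\gamma^*$ only through the union of the three exceptional events and the requirement $n\ge N$. The three events give total failure probability $3n^{1-\tau}$.

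\textbf{Main obstacle.} I expect the hard part to be Step~2's uniformity over $x$ with an adaptive, data-dependent $k(x)$, on a design whose support may be unbounded. I would handle it by discretising $k$ over a dyadic grid, so that only $O(\log n)$ values occur. The neighbour sets range over a class of polynomially many subsets of the sample, by Vapnik--Chervonenkis arguments for balls. A union bound over both then gives the required concentration uniformly, without any covering of the unbounded support.
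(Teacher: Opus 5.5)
Your plan follows the paper's own route, which is Theorem~\ref{th.transfer.local.knn.proof} with $\kappa=1$. The ingredients match: a local $k$-NN whose $k(x)$ is a plug-in of an $\ell$-NN density estimate with $\ell\asymp\log n$; two-sided uniform control of $R_k(x)$ via Bernstein plus a VC covering of balls (a union over all $k$ already gives $n^{1-\tau}$, so the dyadic grid is unnecessary); the bias--variance bound on the high-density set $\Delta=\{p\ge C\ell/n\}$; a crude bound on $\Delta^c$; and integration against $\Q\mathstrut_{\!\sX}$. Your inequality $u^{r_\beta}\wedge 1\le u^{g}$ with $u=\log n/(np(x))$ is a compact form of the paper's split. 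On $\Delta$ the paper writes $p^{-r_\beta}\le p^{-g}(C\ell/n)^{g-r_\beta}$, and on $\Delta^c$ it uses $\Q\mathstrut_{\!\sX}(\Delta^c)\le (C\ell/n)^{g}\,\mT(\P_{\sX},\Q\mathstrut_{\!\sX},g)$.

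One step fails as written: your $k(x)$ has no lower floor. On the bad set, for instance far from the data when the support is unbounded, $\wh p(x)$ can be arbitrarily small, so $k(x)$ can be of order $1$. The noise part of the error is then not $O(1)$, contrary to your claim. Uniform sub-exponential concentration only gives $|\wh f(x)-\ol f(x)|\lesssim \sqrt{\log n/k(x)}+\log n/k(x)$. A maximum bound over $n$ sub-exponential (or even Gaussian) variables gives $O(\log n)$, hence $O(\log^2 n)$ after squaring. Multiplied by $\Q\mathstrut_{\!\sX}(\text{bad})\lesssim (\log n/n)^{g}\,\mT(\P_{\sX},\Q\mathstrut_{\!\sX},g)$, this costs an extra polylogarithmic factor, so you do not obtain the stated bound with a constant $C(\tau)$. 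The case $\gamma=0$ has the same problem, since there you need an $O(1)$ bound everywhere.

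The fix is what the paper does: clip $k(x)=n\wedge\lceil\cdots\rceil\vee\lceil\log n\rceil$. Then the noise event of Corollary~\ref{cor.variance.bound.nice.version} gives $(\wh f(x)-\ol f(x))^2\le C_\tau\log n/k(x)\le C_\tau$ everywhere. The bad-set contribution becomes at most $(2C_\tau+8L^2)\,\Q\mathstrut_{\!\sX}(\Delta^c)$.

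A smaller point: in Step~1 the condition should be $k/(np(x))\le c(\theta,\tau)$ rather than $\le 1$. Before invoking \eqref{eq.minimal.maximal.mass}, you need the population radius $\zeta_{h_+}(x)$, with $h_+\asymp B_\tau k/n$, to be at most $1$ (compare Lemma~\ref{lem.inclusions}). This only affects constants.
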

In view of Corollary \ref{cor.upper.bound.one.sample.P}, one notices that in configurations of $(\gamma^*, s^*, r_\beta, n, m)$ where the rate is not multiplicative, the rates in Theorem \ref{th.upper.bound.two.sample} are essentially the minimum of the rates obtained from selecting the best among two separate estimators, trained only on the source sample and only the target sample, respectively.\\

We complement the upper bounds from Theorem \ref{th.upper.bound.two.sample} with uniform lower bounds over classes of pairs with bounded transferability indices defined as follows
\begin{align*}
    \ul\mP(D, \theta, s, \gamma) := \Big\{(\P_{\sX}, \Q\mathstrut_{\!\sX}) \in \mP(D, \theta)^2: \gamma^*(\P_{\sX}, \Q\mathstrut_{\!\sX}) \geq \gamma \ \text{ and } \ \gamma^*(\Q\mathstrut_{\!\sX}, \Q\mathstrut_{\!\sX}) \geq s\Big\}.
\end{align*}
With this definition, we obtain the following lower bound on the minimax risk over classes $\ul \mP(D, \theta, s, \gamma).$
\begin{theorem}
    \label{th.lower.bound}
    Work under \eqref{eq.covariate.shift} and within the regression model \eqref{eq.model} with Gaussian noise $\eps \sim \mN(0, \sigma^2)$ with $\sigma^2 > 0.$ Let $L, D > 0, \ \theta \geq \Leb(\B(0, 1)), \beta \in (0, 1]$ and $(\gamma, s) \in (0, \infty)\times (0, 1).$ Then, there exists a constant $C_0 > 0$ independent of $n,m$ such that for all $n\vee m >1$, 
    \begin{align*}
        \inf_{\wh f}\sup_{\substack{f_* \in \mH(L, \beta)\\ \P_{\sX}, \Q\mathstrut_{\!\sX} \in \ul\mP(D, \theta, \gamma, s)}} \E\Big[\|\wh f - f_*\|_{L^2(\Q\mathstrut_{\!\sX})}^2\Big] \geq  \begin{dcases}
            C_0m^{- s\tfrac{\gamma - r_\beta}{\gamma - s}}n^{-\gamma\tfrac{r_\beta - s}{\gamma - s}} & \vcenter{\hbox{\shortstack[l]{if $(\gamma-r_\beta)(s-r_\beta)<0,$\\and $m\in[n,n^{\gamma/s}]$}}}\\
            C_0\Big[n^{-(\gamma\wedge r_\beta)} \wedge m^{-(s \wedge r_\beta)}\Big] & \text{ else. }
        \end{dcases}
    \end{align*}
    where the $\inf$ is taken over all estimators obtained from $n$ samples from $\P_{\sX, \sY}$ and $m$ samples from $\Q\mathstrut_{\!\X, \sY}.$
\end{theorem}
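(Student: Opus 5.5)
The plan is to build a single hard family of \emph{radially Pareto} source–target pairs whose integrability indices sit exactly at the prescribed values. On this family we run an Assouad-type argument with bumps whose bandwidth $h$ and location $R$ are tuned jointly. The different regimes of Theorem~\ref{th.lower.bound} should then come out of a two-parameter optimisation over $(R,h)$.

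\textbf{Step 1: the hard pair.} Take $q(x)\propto(1+\|x\|)^{-a}$ with $a=d/(1-s)$, and $p(x)\propto(1+\|x\|)^{-b}$ with $b=(a-d)/\gamma=ds/((1-s)\gamma)$. In polar coordinates,
\begin{align*}
\E_{\Q}[q^{-s'}]\asymp\int^\infty(1+r)^{-a(1-s')}r^{d-1}\,dr,
\end{align*}
which is finite iff $a(1-s')>d$, i.e.\ iff $s'<s$; hence $\gamma^*(\Q,\Q)=s$. In the same way, $\E_{\Q}[p^{-\gamma'}]<\infty$ iff $a-b\gamma'>d$, i.e.\ iff $\gamma'<\gamma$, so $\gamma^*(\P,\Q)=\gamma$. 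Membership in $\mP(D,\theta)$ holds as in the Pareto example (Example~\ref{ex.pareto}): polynomial densities vary by at most a constant factor on balls of radius $\leq 1$, which gives the local mass bounds with $\theta\geq\Leb(\B(0,1))$, after rescaling by constants to respect $D$.

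The assumption $s<1$ makes $a$ finite and $q$ integrable. If $b\le d$, then $p$ is not integrable; in that case I would either truncate $p$ at a radius $R_{\max}$ much larger than every radius used below (which only raises $\gamma^*$), or use a mixture, keeping its tail exponent $b$ on the relevant shell. I expect this to be a bookkeeping issue rather than a real obstacle.

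\textbf{Step 2: hypotheses and KL control.} On the shell $\{R\le\|x\|\le2R\}$, place $M\asymp R^d h^{-d}$ disjoint bumps $\pm L h^\beta\phi((x-z_k)/h)$ with $h\le1$, giving $f_\omega\in\mH(L,\beta)$ for $\omega\in\{0,1\}^M$. With Gaussian noise, flipping one bump changes the joint law of the two samples by a KL divergence of order
\begin{align*}
\big(nR^{-b}+mR^{-a}\big)h^{2\beta+d}.
\end{align*}
We require this to be $\lesssim1$, which forces
\begin{align*}
h^{2\beta+d}\asymp\min\big(R^b/n,\;R^a/m,\;1\big).
\end{align*}
Assouad's lemma then yields a risk bound $\gtrsim M\cdot\Q(\text{bump})\cdot h^{2\beta}\asymp R^{d-a}h^{2\beta}$.

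\textbf{Step 3: optimise over $R$.} This is the main computation and I expect it to be the main obstacle: the case analysis has to reproduce the thresholds exactly.

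Take first $h=1$, which is allowed once $R^b\ge n$ and $R^a\ge m$. The bound is $R^{d-a}$ with $R=n^{1/b}\vee m^{1/a}$. Since $(a-d)/b=\gamma$ and $(a-d)/a=s$, this gives $n^{-\gamma}\wedge m^{-s}$.

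Take next $R\asymp1$, so that the bumps sit in the bulk. This gives the classical $n^{-r_\beta}\wedge m^{-r_\beta}$.

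The multiplicative regime should come from the kink $nR^{-b}=mR^{-a}$, i.e.\ $R=(m/n)^{1/(a-b)}$, with $h<1$ there. Substituting $h^{2\beta+d}=R^b/n$ gives a risk $R^{d-a+b r_\beta}\,n^{-r_\beta}$. A direct exponent computation should produce $m^{-s(\gamma-r_\beta)/(\gamma-s)}n^{-\gamma(r_\beta-s)/(\gamma-s)}$. The constraints $h\le1$ and $R\ge1$ should translate exactly into $m\in[n,n^{\gamma/s}]$ and $(\gamma-r_\beta)(s-r_\beta)<0$.

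Since the overall lower bound is the maximum over all admissible $(R,h)$, checking these three candidate configurations in each region of $(\gamma,s,r_\beta,n,m)$ yields the stated piecewise bound. The constant $C_0$ depends only on $(L,D,\theta,\beta,d,\sigma,\gamma,s)$.
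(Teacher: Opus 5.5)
Your construction and three-candidate optimisation are correct, and for $\gamma<s/(1-s)$ this is essentially the paper's proof. The paper uses product Pareto densities on $[0,\infty)^d$, whose effective tail exponents $d(\alpha_{\P}+1)$ and $d(\alpha_{\Q}+1)$ play the role of your $b$ and $a$. It places bumps on the cube $[a,2a]^d$ and uses Varshamov--Gilbert plus Fano. It solves the same KL constraint for the location given $h$, rather than for $h$ given $R$. Your three candidates match its cases: $h=1$ is Cases 1, 3.2, 4.2; the bulk is Cases 2, 3.1, 4.1; the kink is the balancing choice in Cases 3.3, 4.3. Two things differ. Your radial pair avoids the orthant's corner in the local-mass check. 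Assouad removes the $M\ge 8$ bookkeeping.

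One small inaccuracy: admissibility of the kink ($R\ge1$, $h\le1$) gives only that $m$ lies between $n$ and $n^{\gamma/s}$. Supercriticality is not an admissibility constraint. It is what makes the kink dominate the other two candidates, and that is all the proof needs.

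The remedy you sketch for $b\le d$, i.e.\ $\gamma\ge s/(1-s)$, does not work as stated. Truncating $p$ alone makes $p=0$ on a set of positive $\Q$-mass. Then $\mT(\P,\Q,\gamma')=\infty$ for every $\gamma'>0$, so $\gamma^*(\P,\Q)=0$: the index drops rather than rises.

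More fundamentally, no modification of $p$ alone can work when $\gamma>s/(1-s)$. By H\"older, for every $\P$,
\begin{align*}
\mT\big(\Q,\Q,\gamma/(1+\gamma)\big)=\int q^{1/(1+\gamma)}\,dx\le \mT(\P,\Q,\gamma)^{1/(1+\gamma)}.
\end{align*}
Hence $\gamma^*(\P,\Q)\ge\gamma$ forces $\gamma^*(\Q,\Q)\ge\gamma/(1+\gamma)>s$, so a "mixture" for $p$ with your $q$ cannot reach the index $\gamma$.

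You have to change $q$ as well. For example, truncate both $p$ and $q$ at an $(n,m)$-dependent radius $R_{\max}$ beyond every shell you use. Both indices then become infinite, which is allowed because $\ul\mP$ only lower-bounds them. On the shells, $q$ is unchanged up to constants and $p$ only gets smaller, which only loosens the KL constraint. At $b=d$ exactly, a logarithmic correction such as $p\propto(1+\|x\|)^{-d}\log(e+\|x\|)^{-2}$ already suffices.

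The paper's own Pareto construction has the same restriction, since $\alpha_{\P}=s/((1-s)\gamma)-1>0$ if and only if $\gamma<s/(1-s)$, and it does not treat this range. You were right to flag it; only your fix needs correcting.
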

The same conclusion holds uniformly over classes of source-target pairs with fixed, rather than lower-bounded, transferability indices. The requirement $\theta \geq \Leb(\B(0, 1))$ can be omitted, as $\mP(D, \theta)$ is empty for $\theta < \Leb(\B(0, 1))$ as a consequence of Lebesgue's differentiation theorem. The lower bound is proved for $s\in (0,1)$ using Pareto source-target pairs. Borderline cases such as $\gamma \wedge s = 0$ or $s =1$ can be treated with alternative constructions for the target distribution at the cost of additional technicalities and are not pursued here. The cases $s > 1$ or $\gamma = \infty$ are out of the scope of this article as far as formal lower bounds constructions are concerned.\\

\subsection{Analysis of the rates}
\label{sec.3.2}

As shown in Theorems \ref{th.upper.bound.two.sample} and \ref{th.lower.bound}, the transferability indices govern the exponents in our convergence rates. Our statements of upper and lower bounds are asymmetric in the sense that the exponents in the upper bound are given for any $0 \leq \gamma < \gamma^*$ and  $0\leq s < s^*,$ while the exponents in the lower bounds are expressed in terms of transferability indices. Moreover, the upper bounds also incorporate the transfer function as a multiplicative constant. This is natural, as the existence of lower bounds shows that the multiplicative constants must blow up for exponents that are larger than $\gamma^*$ and $s^*$. The transfer function's appearance as a multiplicative constant captures this effect. Under certain conditions on the type of singularity at the transferability index, those can be absorbed in the rate, at the expense of a poly-logarithmic factor. While we discuss this more in detail in Section \ref{sec.4.1}, we now proceed to analyse the convergence rates obtained in Section \ref{sec.3.1}.\\

The goal of the following sections is to describe the structure of the aforementioned convergence rates, their transitions and the different regimes arising from our main results. This leads us to prioritise readability and clarity. As a first step in this direction, we introduce vocabulary to describe the different situations arising from Section \ref{sec.3.1}. \\

\noindent\textbf{Configurations.} We will refer to a configuration of $(\gamma, s, r_\beta)$ as \textit{supercritical} when $(\gamma - r_\beta)(s - r_\beta) < 0.$ We will call a configuration of the same parameters \textit{(sub)critical} whenever $(\gamma - r_\beta)(s - r_\beta) = 0$ (resp.\ $(\gamma - r_\beta)(s - r_\beta) > 0$).\\

\noindent\textbf{Regimes.} We will call \textit{wedge regime} any configuration of $(\gamma, s, r_\beta, n, m)$ in which the rates are proportional to the \textit{wedge rate} $n^{-(\gamma\wedge r_\beta)} \wedge m^{-(s\wedge r_\beta)}.$ Similarly, the \textit{acceleration regime} refers to any configuration of $(\gamma, s, r_\beta, n, m)$ in which the minimax rates are proportional to the \textit{accelerated rates} $n^{-\gamma\tfrac{r_\beta - s}{\gamma - s}}m^{-s\tfrac{\gamma - r_\beta}{\gamma - s}}.$\\

To further simplify exposition for the remaining Sections \ref{sec.3.2} and \ref{sec.3.3}, we omit all logarithmic factors and set the multiplicative constants to be equal to one. In particular, we omit the multiplicative occurrences of transfer functions. Finally, we will refer to the $n$-dependent part of the convergence rates as the \textit{source rate} and to the $m$-dependent part as the \textit{target rate}.

\subsubsection{Phase diagrams}
\label{sec.3.2.1}

\begin{figure}[H]
\centering
\begin{tabular}{@{}ccc@{}}
    \includegraphics[width=0.315\textwidth, trim=3pt 0cm 4pt 0cm, clip]{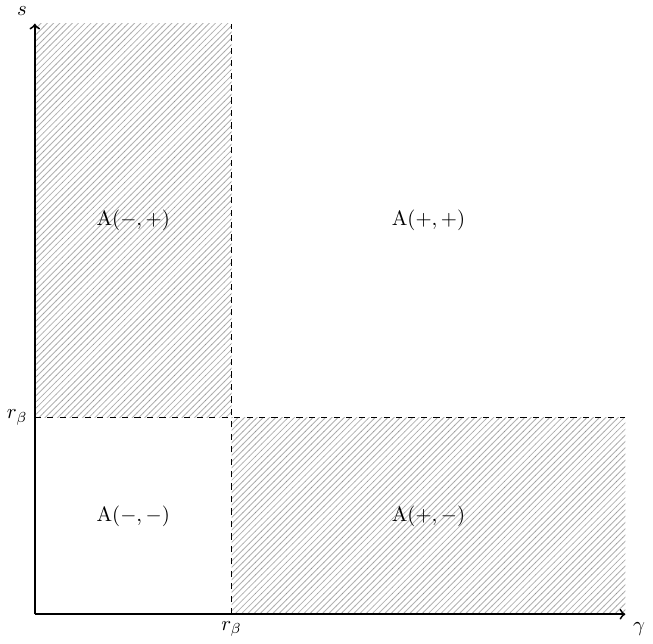} &
    \includegraphics[width=0.315\textwidth, trim=3pt 0cm 4pt 0cm, clip]{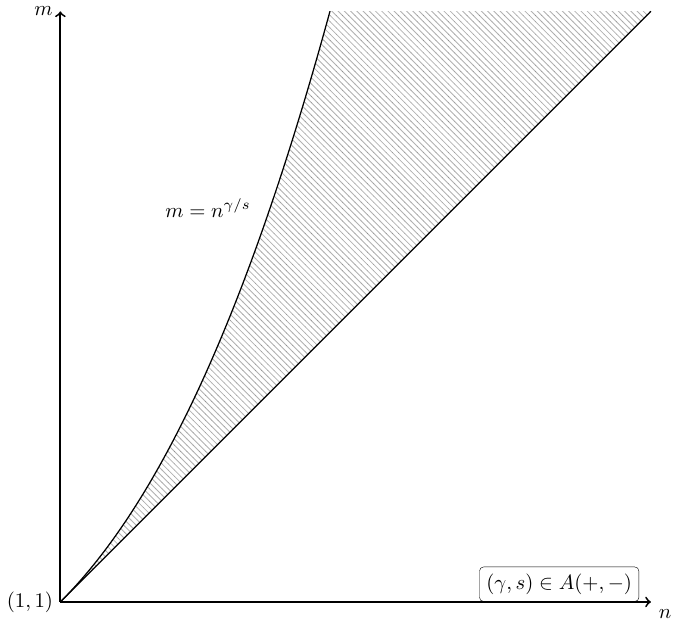} &
    \includegraphics[width=0.315\textwidth, trim=3pt 0cm 4pt 0cm, clip]{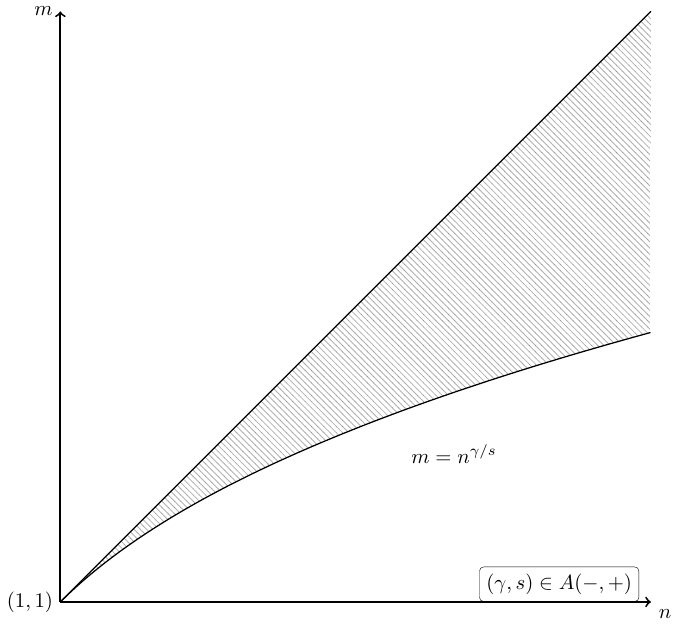}
\end{tabular}
\caption{\label{fig.rates.regimes} \textbf{Left:} Phase diagram in the $(\gamma, s)$ coordinates system. The dashed regions correspond to supercritical configurations. \textbf{Center:} Phase diagram in $(n, m)$ coordinates for $(\gamma, s) \in A(+, -).$ \textbf{Right:} Phase diagram in $(n,m)$ coordinates for $(\gamma, s) \in A(-, +).$ The dashed regions correspond to the acceleration regime.}
\label{fig.rates.regimes_accel}
\end{figure}
A global phase diagram is presented on the leftmost panel of Figure \ref{fig.rates.regimes}. The space of transferability indices is sharply partitioned into four regions. Region $\A(-, -)\cup\A(+, +)$ represent the set of subcritical configurations, where the accelerated regime cannot emerge, while $\A(-, +)\cup\A(+, -)$ the set of supercritical configurations where the accelerated regime might emerge, depending on $(n,m).$ The boundary (dotted lines) between $\A(-, -)\cup\A(+, +)$ and $\A(-, +)\cup\A(+, -)$ represents the set of critical configurations. In the region $\A(-, -)$ the rate is always $n^{-\gamma}\wedge m^{-s}$ and in $\A(+, +)$ the rate is always $n^{-r_\beta} \wedge m^{-r_\beta}.$ After fixing $(\gamma, s),$ it is possible to view the phase diagram in $(n, m)$ coordinates. This results in the central and right-most panels of Figure \ref{fig.rates.regimes_accel}. They describe the situation where $(\gamma, s)$ is fixed and supercritical. Since the two acceleration regimes are mutually exclusive, we obtain two diagrams. The central diagram corresponds to $(\gamma, s) \in \A(+, -),$ meaning that $\gamma/s > 1$ while the right-most diagram corresponds to $(\gamma, s) \in \A(-, +),$ where $\gamma/s < 1.$ On both the central and leftmost diagrams of Figure \ref{fig.rates.regimes}, the shaded regions correspond to the acceleration regime.
\begin{wrapfigure}[16]{l}{0.32\textwidth}
    \vspace{-\baselineskip}
  \begin{center} 
    \includegraphics[width=0.315\textwidth]{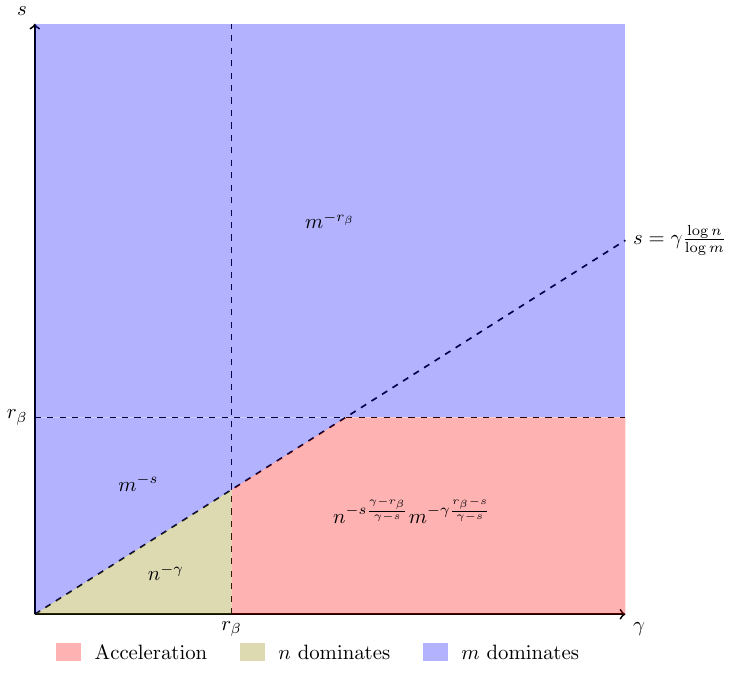}
  \end{center}
  \vspace{-\baselineskip}
  \caption{\label{fig.rates.regimes.nm} Subdivision of $(\gamma, s)$ phases for fixed $(n,m)$ pair. Here, $n < m.$}
\end{wrapfigure}
\indent It is also instructive to fix a particular pair $(n,m)$ and visualise the resulting subdivision of the leftmost diagram of Figure \ref{fig.rates.regimes}. We fix $n < m$ and obtain Figure \ref{fig.rates.regimes.nm}. As will be seen in Section \ref{sec.3.2.3}, the rates transition smoothly from one regime to another. Since $n < m,$ the only possible acceleration regime is the case $s < r_\beta < \gamma,$ corresponding to $\A(+, -).$ Indeed, this is the only way we can have $m \in [n, n^{\gamma/s}].$ The acceleration regime is represented as the red region in Figure \ref{fig.rates.regimes.nm}, while the rest of the diagram corresponds to the wedge regime. In the green region, the wedge rate is driven by the source rate. In the blue region, the rate is driven by the target rate. The blue region is subdivided into two regions. Below the line $s = r_\beta$ the rate is $m^{-s}$ and the rate is $m^{-r_\beta}$ above this line. We see that the red region is the intersection of $\A(+, -)$ with the area under the line $s = \gamma\tfrac{\log n}{\log m}.$ As the ratio $\tfrac{\log n}{\log m}$ increases towards $1$ the red region increases in area. As the ratio becomes larger than $1,$ the diagram shifts to its reflection with respect to the line $s = \gamma.$ That is because if $n > m,$ then the only way for acceleration to happen is to have $n^{\gamma/s} \leq m,$ meaning $\gamma < s.$ Hence, the acceleration regime is represented by the intersection of $\A(-, +)$ with the area above the line $s =\gamma \tfrac{\log n}{\log m}.$ \\

This concludes our general mapping of configurations and regimes and provides a basis for the next sections, where we discuss and visualise phase transitions for $(\gamma, s)$ and $(n,m).$ Since the acceleration regimes are symmetric, we will focus on the case $s < r_\beta < \gamma.$ 

\subsubsection{Phase transition of transferability indices}
\label{sec.3.2.2}
The acceleration regime is better represented in a $(\log n, \log m)$ coordinate system. We begin by illustrating a phase transition in Figure \ref{fig.rates.transition.regimes_accel_log}. The mechanics go as follows. We fix $\gamma$ and $r_\beta$ and give three snapshots, in $(\log n, \log m)$ coordinates, of a smooth transition of $s$ from $s \in (r_\beta, \gamma)$ towards $s \in (0, r_\beta),$ or, equivalently, $(\gamma, s)$ transitions from $\A(+, +)$ to $\A(+, -).$ Figure \ref{fig.rates.transition.regimes_accel_log} represents three stages of this transition.
\begin{figure}[H]
\centering
\begin{tabular}{@{}ccc@{}}
    \includegraphics[width=0.315\textwidth, trim=3pt 0cm 4pt 0cm, clip]{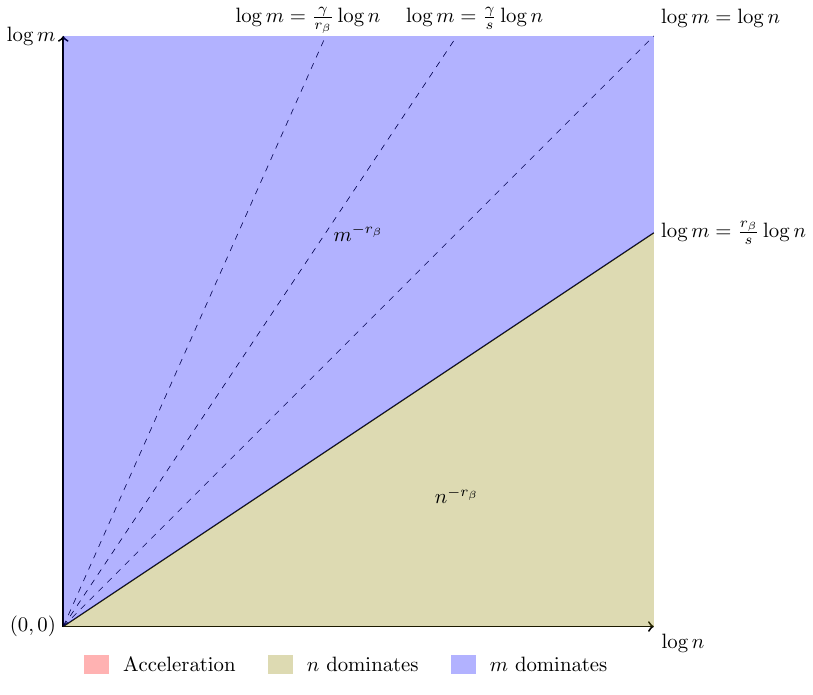} &
    \includegraphics[width=0.315\textwidth, trim=3pt 0cm 4pt 0cm, clip]{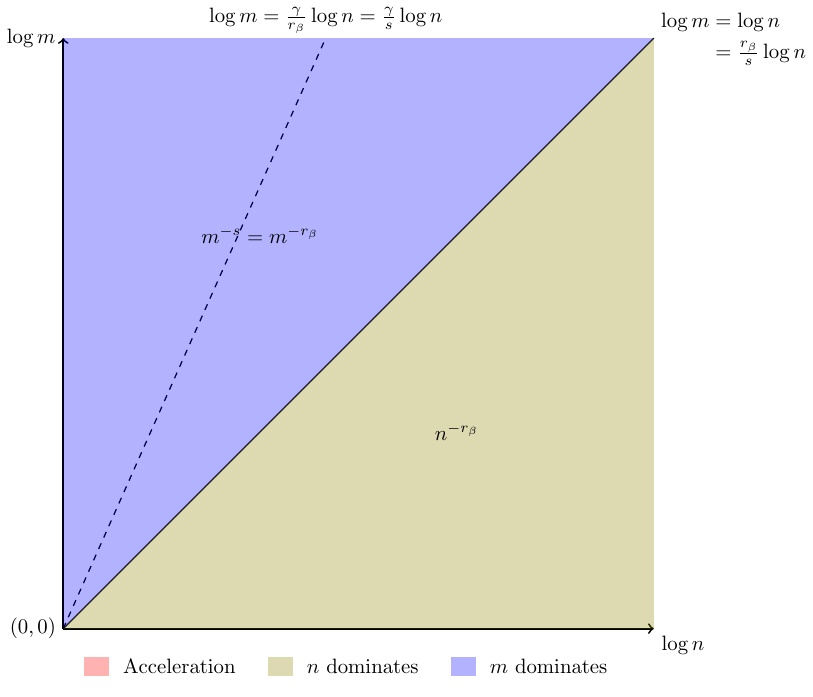} &
    \includegraphics[width=0.315\textwidth, trim=3pt 0cm 4pt 0cm, clip]{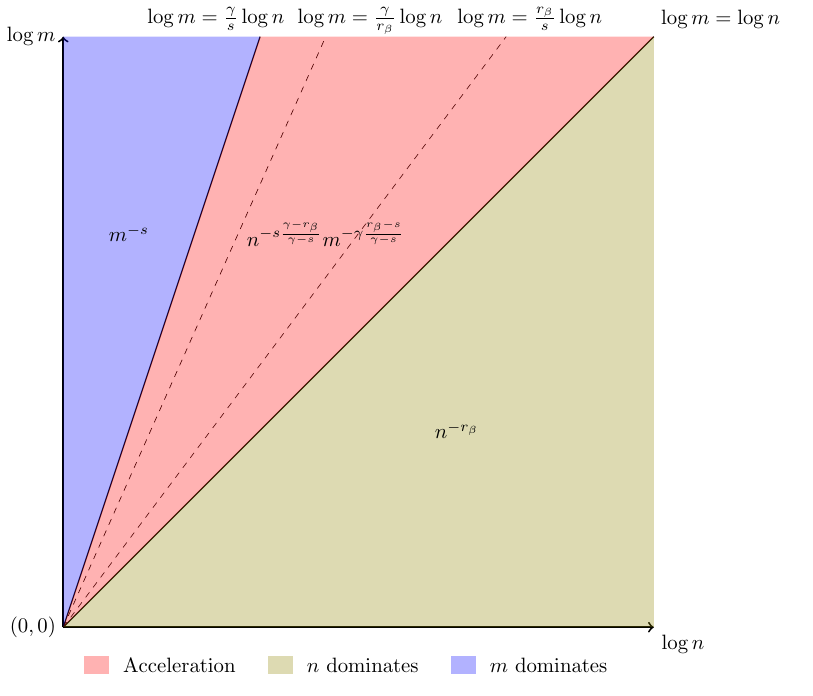}
\end{tabular}
\caption{Phase transition in $(\log n, \log m)$ coordinates with $\gamma/r_\beta$ fixed. \textbf{Left:} $r_\beta < s < \gamma,$ subcritical configuration. \textbf{Centre:} $r_\beta = s < \gamma,$ critical configuration. \textbf{Right:} $s < r_\beta < \gamma,$ supercritical configuration.}
\label{fig.rates.transition.regimes_accel_log}
\end{figure}
The leftmost diagram of Figure \ref{fig.rates.transition.regimes_accel_log} corresponds to $(\gamma, s) \in A(+, +).$ The rate is the wedge rate. The green region represents the pairs $(\log n, \log m)$ for which the rate is driven by the source rate and the blue region represents the pairs for which it is driven by the target rate. We now describe the solid and dashed lines on the leftmost diagram. The boundary between these two regions is given by the line $s\log m = r_\beta\log n$, which we will call $a(s)$. The line $s\log m = \gamma \log n,$ which we will call $b(s)$. The latter represents $m = n^{\gamma/s}$, corresponding to the maximal $m$ for which the acceleration regime would occur, if it were possible. In contrast, the line $\log m = \log n$ represents the smallest $m$ for which acceleration would occur and is called $(I)$. The line $r_\beta \log m = \gamma \log n$, denoted $(M),$ corresponds to pairs $(n,m)$ that balance the two factors in the accelerated rates and is represented for reference.\\

As $s$ decreases towards $r_\beta,\ a(s)$ collapses towards $(I)$ while $b(s)$ collapses towards $(M)$, resulting in the central diagram of Figure \ref{fig.rates.transition.regimes_accel_log}. The latter represents the critical configuration $s = r_\beta.$ As soon as $s < r_\beta,$ we obtain the rightmost diagram of Figure \ref{fig.rates.transition.regimes_accel_log}. There, the configuration $(\gamma, s, r_\beta)$ is supercritical. The lines $a(s)$ and $(M)$ are in the red cone delimited by $(I)$ and $b(s)$. The red cone is the set of $(\log n, \log m)$ corresponding to the acceleration regime.\\

We have described the emergence of acceleration regimes as one passes a phase transition in $(\gamma, s).$ Another complementary perspective arises from fixing $(\gamma, s)$ at supercriticality and visually analysing the rates along paths of source-target sample sizes $(n,m)$. This leads us to the next section.

\subsubsection{Samples sizes paths}
\label{sec.3.2.3}
We now freeze the configuration of the rightmost diagram of Figure \ref{fig.rates.transition.regimes_accel_log} and explore the evolution of the rates along two paths of $(n,m)$ in the $(\log n, \log m)$ plane. One main point of this section is to show, visually, that the transitions between the wedge and accelerated rates are smooth, and that the accelerated rates are significantly smaller than the wedge rates in the acceleration regime.\\

Given the structure of the rates, we represent their evolution on a logarithmic scale. The first path is called \textit{linear path} and is given by pairs $(n,m)$ satisfying $\log n + \log m = \log nm = R.$ Letting $R = \log(B),$ such a path can be parametrised by $\lambda \in [0, 1]$ as $n = B^{1-\lambda}$ and $m = B^{\lambda}.$ The second path corresponds to a \textit{fixed budget} path where the total sample size is fixed. That is $n+m = B,$ or $\lambda \mapsto ((1-\lambda)B, \lambda B),$ and $n\wedge m \geq 1.$ The paths in the $(\gamma, s)$ plane and the evolution of the logarithm of the corresponding rates along them in Figure \ref{fig.rates.transition.nm}.
\begin{figure}[H]
\centering
\begin{tabular}{@{}ccc@{}}
    \includegraphics[width=0.315\textwidth, trim=3pt 0cm 4pt 0cm, clip]{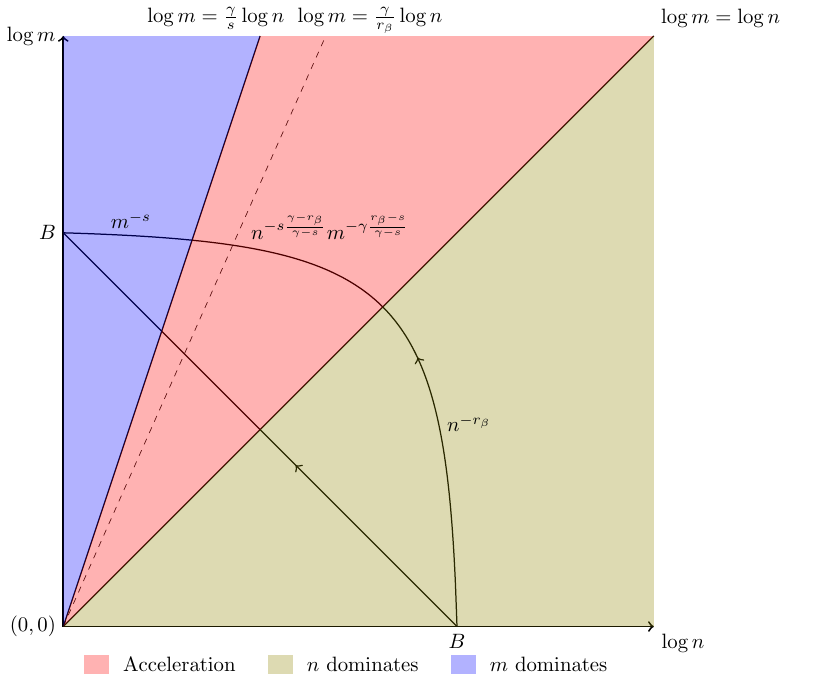} &
    \includegraphics[width=0.315\textwidth, trim=3pt 0cm 4pt 0cm, clip]{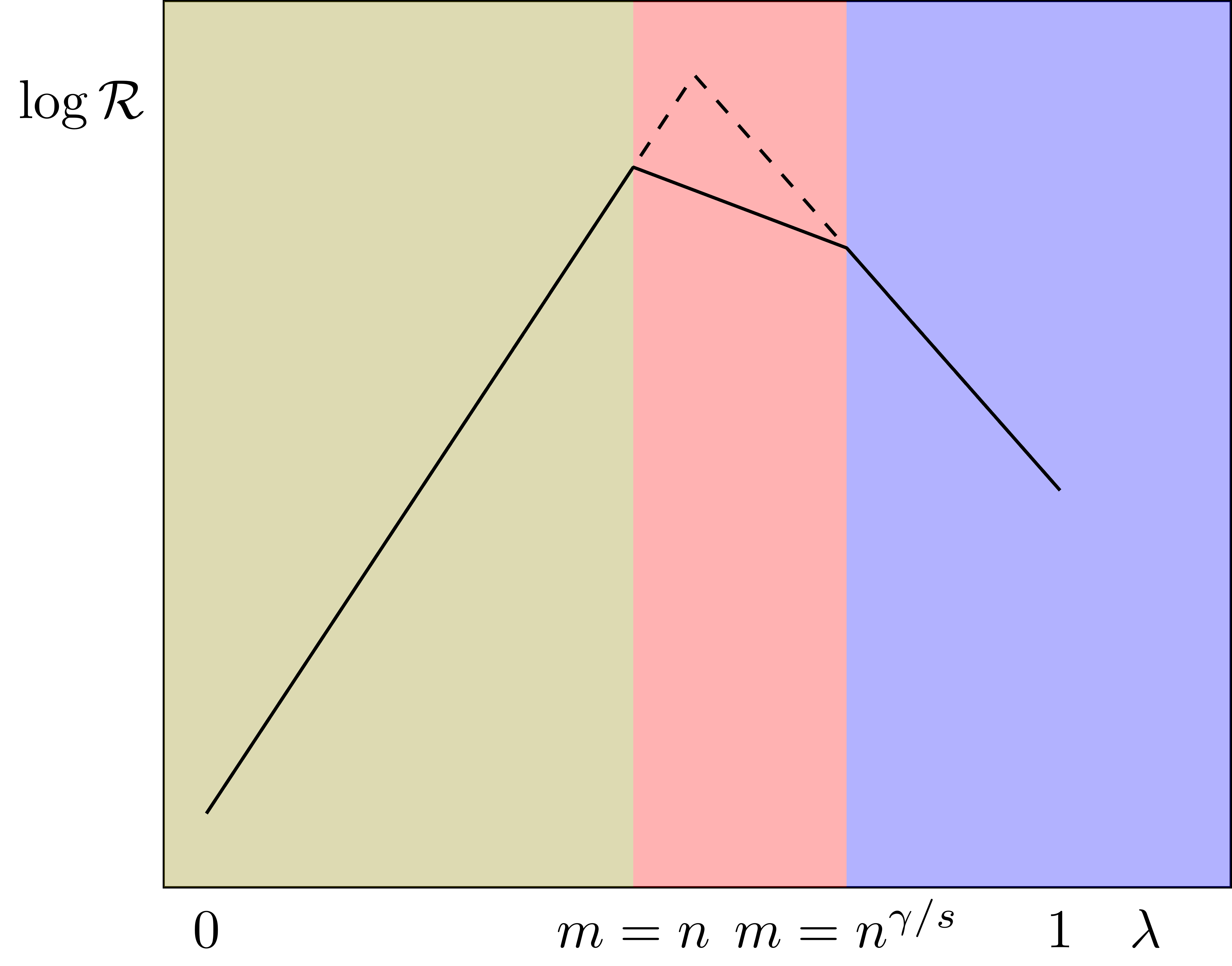} &
    \includegraphics[width=0.315\textwidth, trim=3pt 0cm 4pt 0cm, clip]{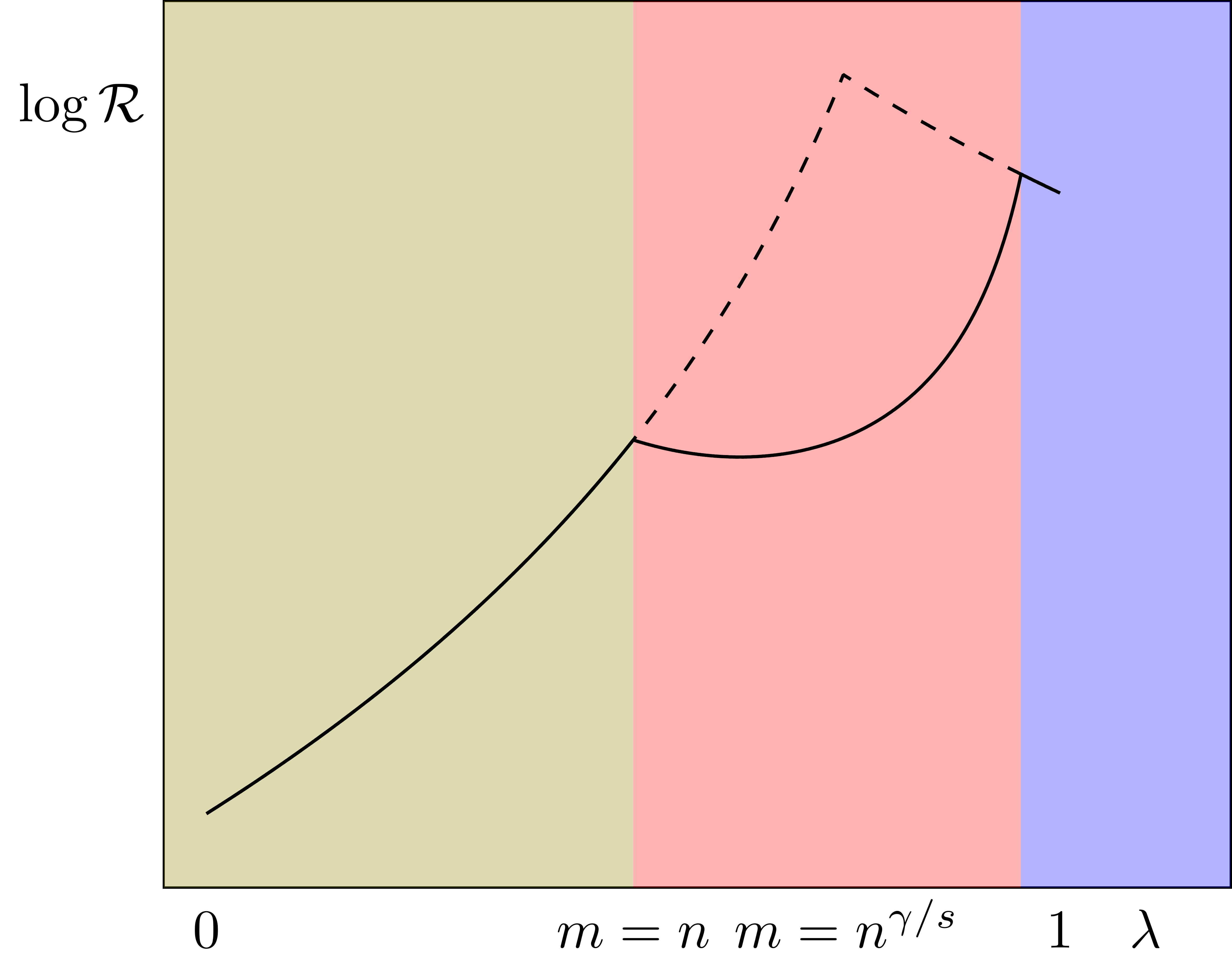}
\end{tabular}
\caption{\textbf{Left:} Two paths in $(\log n, \log m)$ coordinates. The linear path corresponds to $(n, m) = (B^{1-\lambda}, B^{\lambda})$ and the concave path corresponds to $(n, m) = ((1 - \lambda)B, \lambda B).$ \textbf{Centre:} evolution of the logarithm of the rates along the linear path as a function of $\lambda.$ \textbf{Right:} evolution of the logarithm of the rates along the fixed budget path as a function of $\lambda.$}
\label{fig.rates.transition.nm}
\end{figure}
In the central and leftmost plots of Figure \ref{fig.rates.transition.nm}, the dotted curves correspond to the wedge rate. We see clearly that the accelerated rate interpolates smoothly the wedge rate at the boundary points $m = n$ and $m = n^{\gamma/s},$ and that the accelerated rate is lower than the wedge rate. The latter can also be seen by parametrising $m^s = n^{u \gamma + (1 - u)s}$ for $u \in [0, 1].$ Then the accelerated rate becomes $n^{-(r_\beta + u(\gamma - r_\beta))},$ which is strictly smaller than the wedge rate for all $u \in (0, 1)$. This concludes our exposition of phase diagrams that emerge from the minimax rates of Theorems \ref{th.upper.bound.two.sample} and \ref{th.lower.bound}.

\subsection{Heuristics, related works and examples}
\label{sec.3.3}

We now begin by providing a heuristic explanation of how and why the accelerated rates emerge. We then discuss our results in light of another work that obtained similar regimes, and conclude with examples of the resulting rates for concrete source-target pairs.\\

\noindent\textbf{Heuristics for the emergence of an accelerated regime.} Let us now heuristically extract the mechanism behind the emergence, or lack thereof, of an accelerated regime. Firstly, we pick an estimator that optimises the bias-variance tradeoff \textit{pointwise} in $x.$ The corresponding steps are not relevant here and are detailed in Section \ref{sec.4.2}. This rules out any possibility of improving the \textit{integrated} error rate. The resulting pointwise squared error is of the form
\begin{align*}
    (\wh f(x) - f(x))^2 &\leq (np(x) + mq(x))^{-r_\beta} \asymp (np(x) \vee mq(x))^{-r_\beta},
\end{align*}
where we see that the bias-variance mechanics impose the exponent $r_\beta,$ and render it unavoidable. In particular, its appearance does not depend on any transferability index of the transfer functions, and this property would still hold, even for a weakening of the regularity assumptions of $\mP(D, \theta).$ We now introduce a pair of admissible parameters $(\gamma, s).$ In order to derive an excess risk bound, the next step is to integrate the latter against $\Q\mathstrut_{\!\sX}.$ Writing this integral, and using $a \wedge b = \inf_{\lambda \in [0, 1]} a^\lambda b^{1-\lambda}$ for all $\lambda \in [0, 1]$ and for $a, b > 0,$ results in
\begin{align}
    \label{eq.geom.interp.inside.E}
    \big\|\wh f - f_*\big\|_{L^2(\Q\mathstrut_{\!\sX})}^2 &\leq \E_{\Q\mathstrut_{\!\sX}}\bigg[\inf_{\lambda \in [0, 1]}\Big\{(np(X))^{-\lambda r_\beta} (mq(X))^{-(1 - \lambda)r_\beta}\Big\}\bigg].
\end{align}
The minimising $\lambda$ is zero if $mq(x) > np(x)$ and one if $mq(x) < np(x).$ However, in order to make the accelerated regime appear, we instead introduce a parameter $\tau > 1$ and a subset $E_\tau := \{x: \tau^{-1}mq(x) \leq np(x) \leq \tau mq(x)\},$ where $np(x)$ is \textit{comparable} with $mq(x).$ For $x \in E_\tau,$ it holds that $(np(x))\wedge (nq(x)) \asymp (np(x))^{\lambda}(mq(x))^{1-\lambda}$ for any $\lambda \in [0, 1],$ so $\lambda$ can be seen a free tuning parameter instead of being imposed by the imbalance between $np(x)$ and $mq(x).$ Thus, we can separate the integral into three terms. To do so, let us denote by $E_- := \{x: np(x) < \tau^{-1}mq(x)\}$ and by $E_+ := \{x: \tau mq(x) < np(x)\}.$ This, and the previous considerations on the choice of $\lambda,$ leads to
\begin{align}
    \label{eq.three.terms}
    \begin{split}
    \big\|\wh f - f_*\big\|_{L^2(\Q\mathstrut_{\!\sX})}^2 &\leq \E_{\Q\mathstrut_{\!\sX}}\bigg[(mq(X))^{-r_\beta}\mathds{1}_{E_-}(X)\bigg] + \E_{\Q\mathstrut_{\!\sX}}\bigg[(np(X))^{-r_\beta}\mathds{1}_{E_+}(X) \bigg]\\
    &\qquad +\E_{\Q\mathstrut_{\!\sX}}\bigg[(np(X))^{-\lambda r_\beta} (mq(X))^{-(1 - \lambda)r_\beta}\mathds{1}_{E_\tau}(X) \bigg].
    \end{split}
\end{align}
As a result, the integrated error will be of the order of the dominating term in \eqref{eq.three.terms}. It is crucial to mention that the third term, which is obviously the one leading to an accelerated rate, can not appear when all the samples come from either 
$\P_{\sX}$ or $\Q\mathstrut_{\!\sX}.$ Intuitively, the third term \textit{can} dominate under the condition that $\Q\mathstrut_{\!\sX}(E_\tau)$ is large, i.e.\ when $np(x)$ is of the same \enquote{order} as $mq(x)$ on a large enough subset of $\RR^d.$ This explains that the emergence of the accelerated regime depends on $n$ and $m$. Strictly speaking, the size of $\Q\mathstrut_{\!\sX}(E_\tau)$ also depends on $\gamma$ and $s.$ However, their roles are better explained as follows. Since the parameters $\gamma$ and $s$ are tied to the existence of moments of $p(X)^{-1}$ and $q(X)^{-1}$ under $\Q\mathstrut_{\!\sX},$ they dictate integrability, in terms of $\lambda,$ of both terms inside the expectation in \eqref{eq.three.terms}. As a consequence, they fully determine the optimal $\lambda$ that leads (or not, depending on the configuration $(\gamma, s)$) to the accelerated rate. This concludes our heuristic explanation of the accelerated rates and their emergence.\\

\noindent\textbf{Related work.} \label{rem.synergistic} We now compare our results with the recent work \cite{zhou2025synergistic}, which, alongside \cite{schmidt2022local}, is, to our knowledge, one of the rare works in the literature that reports a multiplicative convergence rate for the covariate shift problem.\\

The authors show a similar result in the nonparametric regression setting with $d=1$ for $\Q\mathstrut_{\!\sX} = \mU([0, 1])$ and $\P_{\sX}$ in a one-parameter class of power laws, that is, for $a > 0,$ the density of $\P_{\sX}$ is proportional to $x^{a - 1}\mathds{1}(x \in [0, 1]).$ Although their rates are expressed in terms of $a$ and $r_\beta,$ it is possible to compute the corresponding transferability indices. We find $\gamma^* = 1/(a - 1)$ and $s^* = \infty.$ With these notations, their minimax rates read
\begin{align*}
    \inf_{\wh f}\sup_{f_* \in \mH(L, \beta)}\E\Big[\big\|\wh f - f_*\big\|_{L^2(\Q\mathstrut_{\!\sX})}^2\Big] \gtrsim \begin{dcases}
        n^{-r_\beta}\wedge m^{-r_\beta} & \text{ if } \gamma \geq r_{\beta}\\
        n^{-\tfrac{2\beta + 1}{2\beta + a}}\wedge m^{-r_\beta} & \text{ if } \gamma^* < r_\beta \ \text{ and } m \notin \Big[n, n^{\tfrac{2\beta + 1}{2\beta + a}}\Big]\\
        m^{-(r_\beta-\gamma^*)}n^{-\gamma^*} & \text{ if } \gamma^* < r_\beta \ \text{ and } m \in \Big[n, n^{\tfrac{2\beta + 1}{2\beta + a}}\Big].
    \end{dcases}
\end{align*}
First, we observe that our notion of (super/sub)critical configurations is consistent with their results. Indeed, since $s^* = \infty,$ the only supercritical configuration is $\gamma^* < r_\beta.$ Next, our setting does not include power-law behaviour due to the local mass assumption. However, heuristically sending $s$ to infinity in our accelerated rates yields the same rate exponents as those in \cite{zhou2025synergistic}. These matchings suggest a certain universality in the transferability indices. Supercritical configurations correspond to $r_\beta \in [\gamma^*, s^*]$ and, under additional conditions on $(n,m)$ lead to an accelerated rate of the form $m^{-a}n^{-b}$ with $a + b = r_\beta.$\\

We now discuss aspects of the results that do not match. Sending $s^*$ to infinity in our results leads to the acceleration regime being conditional on $m \in [1, n],$ which is (typically) much larger than their interval $[n^{(2\beta + 1)/(2\beta + a)}, n].$ In the (sub)critical configuration, our wedge rates do match theirs, but not in the supercritical non-accelerated case. The discrepancy in the rates indicates that the class of distributions itself influences both the rates and the range of $(n,m)$ for which acceleration happens. Although tempting, we will not attempt to infer the missing parameter(s) by comparing the two results, especially since convergence rates computed for explicit densities tend to obscure the identification of key parameters, as will be seen in the examples below.\\

\noindent\textbf{Examples.} We now give illustrative examples of rates obtained with Pareto and exponential source-target pairs. The proofs for those examples can be found in Appendix \ref{app.examples}. We stress again that we omit logarithmic factors and set multiplicative constants to one.
\begin{example}[Pareto source-target pairs]
    \label{ex.pareto}
    We consider the two-parameter family of Pareto distributions on $\RR$ denoted by $\Par(\alpha, \sigma).$ For $\alpha, \sigma > 0,$ the density $r$ of $\Par(\alpha, \sigma)$ is defined as
    \begin{align*}
        r(x) = \frac{\alpha}{\sigma}\Big(1 + \frac{x}{\sigma}\Big)^{-(\alpha+1)}\mathds{1}(x\geq 0).
    \end{align*}
    Let $\alpha_{\P}, \alpha_{\Q}, \sigma_{\P}, \sigma_{\Q} > 0$ and consider $\P_{\sX} = \Par(\alpha_{\P}, \sigma_{\P})$ and $\Q\mathstrut_{\!\sX} = \Par(\alpha_{\Q}, \sigma_{\Q})$ with respective densities $p$ and $q.$ For $D, \theta$ chosen as
    \begin{align*}
        D = (\alpha_{\P}/\sigma_{\P})\vee(\alpha_{\Q}/\sigma_{\Q}), \ \text{ and } \ \theta = 2\big(1 + 1/(\sigma_{\P}\wedge\sigma_{\Q})\big)^{\alpha_{\P}\wedge\alpha_{\Q} + 1},
    \end{align*}
    it holds that $\P_{\sX}, \Q\mathstrut_{\!\sX} \in \mP(D, \theta).$ Additionally, the transferability indices are
    \begin{align*}
        \gamma^* = \gamma^*(\P_{\sX}, \Q\mathstrut_{\!\sX}) = \frac{\alpha_{\Q}}{\alpha_{\P} + 1}, \ \text{ and } s^* = \gamma^*(\Q\mathstrut_{\!\sX}, \Q\mathstrut_{\!\sX}) = \frac{\alpha_{\Q}}{\alpha_{\Q} + 1}.
    \end{align*}
    To identify the exponents in the rates, we can verify that
    \begin{align*}
        s^* < r_\beta \Leftrightarrow \alpha_{\Q} < 2\beta/d, \ \text{ and } \ \gamma^* < r_\beta \Leftrightarrow \alpha_{\P} > \alpha_{\Q}/r_\beta.
    \end{align*}
    If both $\alpha_{\Q} < 2\beta/d$ and $\alpha_{\P} > \alpha_{\Q}/r_\beta,$ then the wedge rates read
    \begin{align*}
        n^{-\tfrac{\alpha_{\Q}}{\alpha_{\P} + 1}} \wedge m^{-\tfrac{\alpha_{\Q}}{\alpha_{\Q} + 1}}.
    \end{align*}
    The first necessary condition for the  accelerated rates to occur is
    \begin{align}
        \label{eq.accel.pareto}
        \underbrace{\Big(\alpha_{\Q} < \frac{2\beta}{d} \ \text{ and } \ \alpha_{\P} \leq \frac{\alpha_{\Q}}{r_\beta}\Big)}_{s^* < r_\beta < \gamma^*} \ \text{ or } \ \underbrace{\Big(\alpha_{\Q} \geq \frac{2\beta}{d} \ \text{ and } \ \alpha_{\P} > \frac{\alpha_{\Q}}{r_\beta}\Big)}_{s^* \geq r_\beta > \gamma^*}.
    \end{align}
    In other words, both distributions must be sufficiently heavy-tailed, or both must be sufficiently light-tailed. The second necessary condition for the accelerated rates to occur is
    \begin{align*}
        \underbrace{n \leq m \leq n^{\tfrac{\alpha_{\Q} + 1}{\alpha_{\P} + 1}}}_{s^* < r_\beta \leq \gamma^*} \ \text{ or } \ \underbrace{n^{\tfrac{\alpha_{\Q} + 1}{\alpha_{\P} + 1}} \leq m \leq n}_{s^* \geq r_\beta > \gamma^*}.
    \end{align*}
    In both cases, the accelerated rate is
    \begin{align*}
        n^{-\tfrac{(\alpha_{\Q} + 1)r_\beta - \alpha_{\Q}}{\alpha_{\Q} - \alpha_{\P}}}m^{-\tfrac{\alpha_{\Q} - (\alpha_{\P} + 1)r_\beta}{\alpha_{\Q} - \alpha_{\Q}}}.
    \end{align*}
\end{example}

\begin{example}[Exponential source-target pairs]
    \label{ex.exponential}
    We consider the family of exponential distributions on $\RR$ denoted by $\mE(\lambda).$ For $\lambda > 0,$ the density $r$ of $\mE(\lambda)$ is defined as $r(x) = \lambda e^{-\lambda x}\mathds{1}(x \geq 0).$ Let $\lambda_{\P}, \lambda_{\Q} > 0$ and consider $\P_{\sX} = \mE(\lambda_{\P})$ and $\Q\mathstrut_{\!\sX} = \mE(\lambda_{\Q})$ with respective densities $p$ and $q.$ For $D = \lambda_{\P}\vee \lambda_{\Q}$ and $\theta = e^{D}$ it holds that $\P_{\sX}, \Q\mathstrut_{\!\sX} \in \mP(D, \theta).$ The transferability indices are
    \begin{align*}
        \gamma^* = \gamma^*(\P_{\sX}, \Q\mathstrut_{\!\sX}) = \frac{\lambda_{\Q}}{\lambda_{\P}}, \ \text{ and } s^* = \gamma^*(\Q\mathstrut_{\!\sX}, \Q\mathstrut_{\!\sX}) = 1.
    \end{align*}
    Alternatively, we may say that $s^*$ can be taken as any value in $[0, 1).$ The wedge rates read
    \begin{align*}
        n^{-\tfrac{\lambda_{\Q}}{\lambda_{\P}}\wedge r_\beta} \wedge m^{-r_\beta}.
    \end{align*}
    The only occurrence of accelerated rates is when $\lambda_{\Q} \leq r_\beta \lambda_{\P},$ and $n^{\lambda_{\Q}/\lambda_{\P}} \leq m \leq n.$ In particular, $\Q\mathstrut_{\!\sX}$ must have sufficiently heavier tails than $\P_{\sX}.$ Then, the accelerated rate is
    \begin{align*}
        n^{-\lambda_{\Q}\tfrac{1 - r_\beta}{\lambda_{\P} - \lambda_{\Q}}}m^{-\tfrac{r_\beta\lambda_{\P} - \lambda_{\Q}}{\lambda_{\P} - \lambda_{\Q}}}.
    \end{align*}
\end{example}

\section{Transfer function, estimator, and local mass}
\label{sec.4}

This section provides technical details underlying our results. We begin by studying further properties of the transfer function introduced in Section \ref{sec.2}. We then give details on the estimator that achieves the rates in the upper bounds of Theorem \ref{th.upper.bound.two.sample} and Corollary \ref{cor.upper.bound.one.sample.P}, and we end with a discussion on the class of distributions $\mP(D, \theta).$

\subsection{Transfer function}
\label{sec.4.1}

This section exhibits several properties of the transfer function defined in Section \ref{sec.2} as well as its impact on the convergence rates of Theorems \ref{th.upper.bound.two.sample} and \ref{th.lower.bound}. We describe its behaviour as a function of $\gamma$ and explain what it measures. Then, we focus on its transferability indices. There, we link the latter to standard quantities, such as moments and R\'enyi entropies. Finally, we discuss how the blow-up of $\mT(\P, \Q, \gamma)$ close to $\gamma^*(\P, \Q)$ impacts the convergence rates in our upper bounds.\\

For this section, we consider two arbitrary distributions $\P$ and $\Q$ with respective densities $p$ and $q.$ Let us recall that the transfer function is defined as 
\begin{align*}
    \mT(\P, \Q, \gamma) = \E_{X \sim \Q} \big[p(X)^{-\gamma}\big].
\end{align*}
The transfer function satisfies $\mT(\P, \Q, 0) = 1,$ is $\log$-convex and $\gamma \mapsto \mT(\P, \Q, \gamma)^\gamma$ is non-decreasing (see Lemma \ref{lem.ratio.bound} in Appendix \ref{app.4.1}). The transferability index $\gamma^*(\P, \Q)$ is the eventual (maybe infinite) boundary point where the transfer function ceases being finite and blows up to infinity. Any $\gamma$ for which the transfer function is finite gives an approximation of the amount of $\Q$-mass that lies in low-density regions of $\P.$ Indeed, by Markov's inequality, for any $0 \leq \gamma < \gamma^*(\P, \Q),$ it holds that 
\begin{align*}
    \Q\{x: p(x) \leq t\} \leq t^{\gamma}\mT(\P, \Q, \gamma).
\end{align*}
Similarly, the transfer function controls the truncated moments of $p(X)^{-1}$. This type of control is primarily technical and has been omitted here. It is, however, the type of control that bounds the third term in \eqref{eq.three.terms} of Section \ref{sec.3.3} and yields the accelerated rate. We now focus on the transfer function's transferability indices.

First, whenever $\P = \Q$ with unbounded support, we immediately have the upper bound $\gamma^*(\P, \P) < 1.$ Additionally, lower bounds on the transferability index follow from the existence of generalised moments of the distribution.
\begin{lemma}
    \label{lem.moments.gamma}
    Let $X \sim \P \in \mM$ and $\rho, \eps > 0.$ If $\E[\| X \|^{\rho + \eps}] < \infty,$ then $\gamma^*(\P, \P) \geq \rho/(\rho + d).$
\end{lemma}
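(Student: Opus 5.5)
We need to show that for every $\gamma<\rho/(\rho+d)$ we have $\mT(\P,\P,\gamma)=\int p^{1-\gamma}\,dx<\infty$. The plan is a single Hölder inequality against a polynomial weight.

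Fix $\gamma\in(0,\rho/(\rho+d))$; the case $\gamma=0$ is trivial since $\mT(\P,\P,0)=1$. For any $a>0$, write
\[
p^{1-\gamma} = \bigl(p\,(1+\|x\|)^{a}\bigr)^{1-\gamma}\,(1+\|x\|)^{-a(1-\gamma)}.
\]
Applying Hölder with exponents $1/(1-\gamma)$ and $1/\gamma$ gives
\[
\int p^{1-\gamma}\,dx \le \Big(\int (1+\|x\|)^{a}p(x)\,dx\Big)^{1-\gamma}\Big(\int (1+\|x\|)^{-a(1-\gamma)/\gamma}\,dx\Big)^{\gamma}.
\]
The first factor is finite as soon as $a\le\rho+\eps$, by the moment assumption together with $(1+t)^a\lesssim 1+t^{\rho+\eps}$.

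The second factor is finite iff $a(1-\gamma)/\gamma>d$, i.e.\ $a>d\gamma/(1-\gamma)$. This follows by integrating in polar coordinates, since $\int_{\RR^d}(1+\|x\|)^{-b}\,dx<\infty$ exactly when $b>d$.

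It therefore remains to pick $a$ with
\[
d\gamma/(1-\gamma)<a\le \rho+\eps.
\]
Such an $a$ exists because $\gamma<\rho/(\rho+d)$ gives $d\gamma/(1-\gamma)<\rho<\rho+\eps$; we can simply take $a=\rho$. In fact the $\eps$ lets us go further: the same argument works for any $\gamma<(\rho+\eps)/(\rho+\eps+d)$, which is strictly larger than $\rho/(\rho+d)$. This is why the statement asks for the $\rho+\eps$ moment.

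With $\mT(\P,\P,\gamma)<\infty$ for every such $\gamma$, the definition of the integrability index as a supremum gives $\gamma^*(\P,\P)\ge\rho/(\rho+d)$. There is no real obstacle; the only care needed is the exponent bookkeeping in the Hölder step and the polar-coordinate integrability criterion.
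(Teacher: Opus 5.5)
Your proof is correct and is essentially the paper's argument: a single H\"older inequality splitting $p^{1-\gamma}$ against a polynomial weight, with the moment assumption controlling one factor and the criterion $\int(1+\|x\|)^{-b}\,dx<\infty \iff b>d$ controlling the other. The one difference concerns the endpoint. The paper applies H\"older directly at $\gamma=\rho/(\rho+d)$ with weight $1+\|x\|^{\rho+\eps}$, and that is exactly where the $\eps$ is needed, since $\int(1+\|x\|^{\rho})^{-d/\rho}\,dx$ diverges. Your version stays strictly below the endpoint and in fact uses only $\E[\|X\|^{\rho}]<\infty$. So your closing explanation of the $\eps$ is off: in the paper it serves to get $\mT(\P,\P,\rho/(\rho+d))<\infty$ itself, not to go beyond the endpoint.
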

When $\P \neq \Q,$ obtaining lower bounds on $\gamma^*(\P, \Q)$ from other known quantities is more involved. Here, we obtain lower bounds in terms of indices of R\'enyi/Tsallis divergences. For $\alpha > 0, \ \alpha \neq 1,$ the $\alpha$-Tsallis divergence between $\Q$ and $\P$ in $\mM$ is defined as 
\begin{align*}
    D^T_{\alpha}(\Q\Vert \P) := \frac{1}{\alpha - 1}\Big(1 - \int q(x)^{\alpha}p(x)^{1- \alpha}\, dx\Big) = \frac{1}{\alpha - 1} \E_{X \sim \Q}\Big[\Big(\frac{q(X)}{p(X)}\Big)^\alpha\Big].
\end{align*}
The R\'enyi divergence between $\Q$ and $\P$ is defined as 
\begin{align*}
    D_{\alpha}(\Q\Vert \P) :=
        \frac{1}{\alpha - 1}\log \int_{\RR^d}q(x)^\alpha p(x)^{1 - \alpha}\, dx = \frac{1}{\alpha - 1}\log\big((\alpha - 1)(1 - D^T_{\alpha}(\Q\Vert \P))\big).
\end{align*}
Both divergences are finite for the same parameter $\alpha.$ Finiteness of $D_{\alpha}(\Q, \P)$ for some $\alpha$ combined with knowledge of $\gamma^*(\P, \P)$ is sufficient to obtain a lower bound on $\gamma^*(\P, \Q).$
\begin{lemma}
    \label{lem.renyi.to.gamma}
    Let $\P, \Q \in \mM$ and $\alpha > 0, \alpha \neq 1.$ If $D_\alpha(\P, \Q) < \infty$, then
    \begin{align*}
        \gamma^*(\P, \P)\Big(\frac{\alpha - 1}{\alpha}\Big) \leq \gamma^*(\P, \Q).
    \end{align*}
\end{lemma}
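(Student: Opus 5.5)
The plan is a single application of Hölder's inequality: split the integrand $q\,p^{-\gamma}$ of $\mT(\P,\Q,\gamma)$ into a factor whose $\alpha$-th power is the Rényi integrand and a factor that becomes a power of $p$ integrated against $\P$. I read the hypothesis as finiteness of $I_\alpha := \int q^\alpha p^{1-\alpha}\,dx$, i.e.\ of $D_\alpha(\Q\Vert\P)$, which is the divergence defined just before the lemma.

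\textbf{Trivial case.} If $\alpha\in(0,1)$, then $(\alpha-1)/\alpha<0$. Since $\gamma^*(\P,\Q)\ge 0$ by definition, the inequality holds with nothing to prove, so I assume $\alpha>1$ from now on.

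\textbf{Absolute continuity.} For $\alpha>1$, the integrand $q^\alpha p^{1-\alpha}$ is $+\infty$ on $\{p=0,\,q>0\}$. So $I_\alpha<\infty$ forces $\Q\{p=0\}=0$, and every integral below can be taken over $\{p>0\}$. I also note that $\gamma^*(\P,\P)=0$ makes the statement trivial, so I assume $\gamma^*(\P,\P)>0$.

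\textbf{Main step.} Fix $\gamma$ with $0\le\gamma<\gamma^*(\P,\P)\frac{\alpha-1}{\alpha}$, and let $\alpha'=\alpha/(\alpha-1)$ be the conjugate exponent. On $\{p>0\}$ I write
\begin{align*}
q\,p^{-\gamma} = \big(q\,p^{\frac{1-\alpha}{\alpha}}\big)\cdot p^{\frac{\alpha-1}{\alpha}-\gamma}.
\end{align*}
Hölder with exponents $(\alpha,\alpha')$ then gives
\begin{align*}
\mT(\P,\Q,\gamma) \le \Big(\int q^\alpha p^{1-\alpha}\,dx\Big)^{1/\alpha}\Big(\int p^{\alpha'(\frac{\alpha-1}{\alpha}-\gamma)}\,dx\Big)^{1/\alpha'} = I_\alpha^{1/\alpha}\,\mT(\P,\P,\alpha'\gamma)^{1/\alpha'}.
\end{align*}
The second equality holds because $\alpha'\frac{\alpha-1}{\alpha}=1$, so the second integrand is $p\cdot p^{-\alpha'\gamma}$.

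\textbf{Conclusion.} The first factor is finite by hypothesis. The second is finite because $\alpha'\gamma<\gamma^*(\P,\P)$ by the choice of $\gamma$. Hence $\mT(\P,\Q,\gamma)<\infty$ for every such $\gamma$, and taking the supremum gives $\gamma^*(\P,\Q)\ge \gamma^*(\P,\P)(\alpha-1)/\alpha$.

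\textbf{Main obstacle.} The only real choice is the exponent split that makes the Rényi integrand appear exactly. The remaining care is technical: the null set $\{p=0\}$, and the fact that $\gamma^*(\P,\P)$ is a supremum. For the latter, I take $\gamma$ strictly below the threshold and use that finiteness of $\mT(\P,\P,\cdot)$ at a point implies finiteness at all smaller arguments. This follows from the log-convexity noted in Section~\ref{sec.4.1} together with $\mT(\P,\P,0)=1$.
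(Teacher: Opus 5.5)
Your proposal is correct and follows essentially the same route as the paper. The paper applies H\"older with exponents $(\alpha, \alpha/(\alpha-1))$ to $\int (q/p)\,p^{-\gamma}\,d\P$, which is your split $q\,p^{-\gamma} = \big(q\,p^{(1-\alpha)/\alpha}\big)\,p^{(\alpha-1)/\alpha-\gamma}$ written against $\P$ instead of Lebesgue measure, and it yields the same bound $I_\alpha^{1/\alpha}\,\mT(\P,\P,\alpha'\gamma)^{1/\alpha'}$. Your extra remarks are handled correctly and cover details the paper leaves implicit: the trivial case $\alpha\in(0,1)$, the null set $\{p=0\}$, and the fact that the domain of $\mT(\P,\P,\cdot)$ is closed downward.
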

The finiteness of R\'enyi/Tsallis divergences is equivalent to the density ratio $q/p$ belonging to $L^\alpha(\Q),$ therefore, Lemma \ref{lem.renyi.to.gamma} can be re-interpreted in terms of integrability of the density ratio, combined with integrability of $p^{1-s}.$ The lower bound in Lemma \ref{lem.renyi.to.gamma} is not sharp in the sense that it can become loose for specific pairs $(\P, \Q).$ For example, we can take $\P = \Par(a, 1)$ and $\Q = \mE(1).$ In this case, $\gamma^*(\P, \Q)$ is infinite while $D_{\alpha}(\Q\Vert\P)$ is finite for all $\alpha > 1$ and $\gamma(\P, \P) = a/(a + 1).$ The bound of Lemma \ref{lem.renyi.to.gamma} is not sharp but provides a link between integrability conditions and transferability indices.\\

A final point of interest is the behaviour of the transfer function at the transferability index. First, recalling that $\gamma^*$ stands for $\gamma^*(\P, \Q),$ it is easy to see that depending on the considered pair $(\P, \Q), \ \mT(\P, \Q, \gamma^*)$ can be finite or infinite.
\begin{example}\label{lem.ratio.bound.main.text}
    Let $p$ and $q$ be the respective densities of $\P$ and $\Q.$ Let $a, b > 0$ and $c \geq 0,$ and 
    \begin{align*}
        p \propto \frac{\mathds{1}(x\geq 2)}{x^{a + 1}}\, \text{ and } \ q \propto \frac{\mathds{1}(x \geq 2)}{x^{b+1}\log(x)^{c}} \ \text{ so that } \gamma^* = \frac{b}{a + 1}.
    \end{align*}
    One can check that for all $a, b > 0,$ 
    \begin{align*}
        \text{if } c \leq 1, \ \text{ then } \ \mT(\P_{\sX}, \Q\mathstrut_{\!\sX}, \gamma^*) = \infty, \ \text{ and } \ \mT(\P_{\sX}, \Q\mathstrut_{\!\sX}, \gamma^*) < \infty \ \text{ otherwise.}
    \end{align*}
\end{example}
If the transfer function is finite at the transferability index, then the exponents in the rates given by the lower bound are achievable. In case the transfer function is infinite at the transferability index, then achievability of the exponents in the lower bounds depends on \textit{how} $\mT(\P, \Q, \cdot)$ diverges at $\gamma^*.$ Let us focus on the setting of Corollary \ref{cor.upper.bound.one.sample.P}. Assuming $\gamma^* < r_\beta,$ we can rewrite the upper bound as, with high probability, and up to a $\log n$ factor,
\begin{align*}
    \big\|\wh f - f_*\big\|_{L^2(\Q)}^2 \leq C\mT(\P, \Q, \gamma)n^{\gamma^* - \gamma}n^{-\gamma^*} = Ce^{(\gamma^* - \gamma)\log n + \log \mT(\P, \Q, \gamma)}n^{-\gamma^*}.
\end{align*}
Depending on the behaviour of $\mT(\P, \Q, \gamma)$ at $\gamma^*_-,$ a variety of possible scenarios exist. For example, for a polynomial pole, say, $\mT(\P, \Q, \gamma) = 1/(\gamma^* - \gamma)^a$ for some $a \geq 1,$ then picking $\delta_n := \gamma^* - \gamma_n = a/\log n$ leads to
\begin{align*}
    \big\|\wh f - f_*\big\|_{L^2(\Q)}^2 &\leq Ce^{a(1 - \log(a))}(\log n)^an^{-\gamma^*},
\end{align*}
which shows that when the transfer function blows up polynomially, we pay a poly-logarithmic factor in the rate, while $\gamma^*$ is still achievable as an exponent of $n$.

\subsection{Local nearest neighbours regressor}
\label{sec.4.2}

To obtain our rates, we consider a \textit{local} $k$-nearest neighbours regressor ($k$-NN). Let $\mD_{\P} := \{(X_i, Y_i)\}_{i = 1}^n$ and $\mD_{\Q} := \{(X'_i, Y'_i)\}_{i=1}^m$ be, respectively, two sets of $n$ and $m$ i.i.d.\ observations from $\P_{\sX, \sY}$ and $\Q\mathstrut_{\!\X, \sY}.$ Given two neighbour functions $k_{\P}$ and $k_{\Q},$ the \textit{local} $k$-nearest neighbours estimator is defined as
\begin{align}
    \label{eq.two.sample.local.knn}
    \wh f(x) = \frac{1}{k_{\P}(x) + k_{\Q}(x)}\Bigg[\sum_{i=1}^{k_{\P}(x)} Y_i(x) + \sum_{j=1}^{k_{\Q}(x)} Y'_j(x)\Bigg],
\end{align}
where $Y_i(x)$ and $Y_i'(x)$ denote the label of the $i$-th nearest neighbour of $x$ among $\{X_1, \dots, X_n\}$ and $\{X_1', \dots X_m'\},$ respectively. The functions $k_{\P}$ and $k_{\Q}$ are chosen to balance bias and variance, pointwise and separately, for each data set $\mD_{\P}$ and $\mD_{\Q}.$ Indeed, let us consider a $k$-NN regressor that depends only on $\mD_{\P}.$ Then, it is possible to show that, with high probability and pointwise, the squared error decomposes as
\begin{align*}
    (\wh f(x) - f_*(x))^2 &\lesssim \underbrace{R_k(x)^{2\beta}}_{\text{squared-bias}} + \underbrace{\frac{\log n}{k}}_{\text{variance}},
\end{align*}
where the $\log n$ factor follows from the high-probability nature of the statement, and $R_k(x) = \|x - X_k(x)\|_2^2$ is the distance of $x$ to its $k$-th neighbour among $\{X_1, \dots, X_n\}.$ The latter is then related to its population version $\zeta_{k}(x) := \inf\{r\geq 0: \P_{\sX}\{\B(x, r)\} \geq k/n\},$ and, using the regularity assumption of our distributions, we obtain $\zeta_k(x) \asymp (k/(np(x)))^{1/d},$ which leads to
\begin{align*}
    (\wh f(x) - f_*(x))^2 &\lesssim \bigg(\frac{k_{\P}(x)}{np(x)}\bigg)^{2\beta} + \frac{\log n}{k_{\P}}.
\end{align*}
Balancing the squared bias and variance is done by choosing $k_{\P}^*(x) \asymp \log(n)^{d/(2\beta + d)}(np(x))^{2\beta/(2\beta + d)}.$ Now, since the distribution $\P_{\sX}$ is unknown, we instead use a plug-in approximation. Finally, we need to clip $k_{\P}(x)$ to stay within $\log n,$ for the high-probability statement, and $n.$ This gives us
\begin{align*}
    k_{\P}(x) &\asymp \Big\lceil \log(n) \vee \log(n)^{d/(2\beta + d)}(n\wh p(x))^{2\beta/(2\beta + d)} \wedge n\Big\rceil,
\end{align*}
where $\wh p$ is the $\ell$-nearest neighbours density estimator of $p$. Its expression is given, at $x \in \RR^d,$ by
\begin{align}\label{eq.density.estimator}
    \wh p(x):= \frac{\P_n\{\B(x, R_{\ell}(x))\}}{R_{\ell}(x)^d} = \frac{\ell}{nR_\ell(x)^d},
\end{align}
where $\P_n$ is the empirical distribution of $\{X_1, \dots, X_n\}.$ The results in Theorem \ref{th.upper.bound.two.sample} and Corollary \ref{cor.upper.bound.one.sample.P} are obtained by choosing $\ell \asymp \log((n\vee 1)(m\vee 1)).$ This choice of $\ell$ leads to a density estimator with high variance and low bias, which is coherent with our approach since all we need is to keep the ratio $p/\wh p$ within two positive constants, uniformly over $x$ in a large subset of $\RR^d$. The same procedure can be applied to select $k_{\Q}$ from $\mD_{\Q}$ and obtain the final estimator \eqref{eq.two.sample.local.knn}. Let us now come back to its definition and emphasise that, even if the neighbour functions are chosen separately, the estimator can be argued to choose, among $\mD_{\P}$ and $\mD_{\Q},$ the most informative sample. Plugging \eqref{eq.two.sample.local.knn} in the pointwise error $(\wh f(x) - f_*(x))^2,$ bounding, and rearranging leads to
\begin{align}
   \label{eq.two.sample.risk.is.weighted.of.one.sample}
    (\wh f(x) - f_*(x))^2 &\leq \frac{k_{\P}(x)}{k_{\P}(x) + k_{\Q}(x)}\big(\wh f_{\P}(x) - f_*(x)\big)^2 + \frac{k_{\Q}(x)}{k_{\P}(x) + k_{\Q}(x)}\big(\wh f_{\Q}(x) - f_*(x)\big)^2,
\end{align}
where $\wh f_{\P}$ is a $k_{\P}$-NN estimator trained only on $\mD_{\P}$ and $\wh f_{\Q}(x)$ is a $k_{\Q}$-NN estimator trained only on $\mD_{\Q}.$ We see that the pointwise error is a convex combination of the pointwise errors of $\smash{\wh f}_{\P}$ and $\smash{\wh f}_{\Q}.$ Notice that the neighbour functions $k_{\P}$ and $k_{\Q}$ are monotone in the underlying densities. Consequently, in regions of large imbalance between $p$ and $q,$ the weights in \eqref{eq.two.sample.risk.is.weighted.of.one.sample} favour the estimator obtained from the locally denser sample.

\subsection{Local mass assumptions}
\label{sec.4.3}
In this last part, we discuss the local mass property of the distributions in $\mP(D, \theta).$ This is arguably the most restrictive assumption in our setting. Nevertheless, our constrained setting puts the proper highlight on the phenomenon.\\

One feature of our restricted setting is that it prevents minimax rates from being driven by nearly-singular distributions. Indeed, in \cite{kpotufe2021marginal, pathak2022new, trottner2024covariate} the lower bounds are derived along sequences (in $n, m$) of measures similar to that of \cite[Part III]{kahane1969trois}, which weakly converge to a limiting measure that is singular with respect to the Lebesgue measure. In contrast, our lower bounds are obtained for Pareto distributions.\\

While our main results and the discussion in Section \ref{sec.4.1} suggest that the transfer function is the natural object to consider, even in broader settings, the local mass condition is not. We now recall the findings of Section \ref{rem.synergistic}. We have seen that our results still make sense, to some extent, when we extend them formally to power law distributions. In particular, we captured the structure of the rates in the accelerated regime. However, neither the exponents in the wedge rate nor the thresholds in $n,m$ were reproduced by our results. This suggests further interaction between transferability indices and other quantities. The latter would arise from the shapes of distributions that are \textit{not} included in our setting. The derivation of a quantified weakening of \eqref{eq.minimal.maximal.mass} is out of the scope of this paper, but constitutes the natural continuation of our line of research. Another worthy direction is the case of dimensional mismatch between the supports of $\P_{\sX}$ and $\Q\mathstrut_{\!\sX}.$ This could be approached within the same general framework of our paper by weakening \eqref{eq.minimal.maximal.mass} to only require that for some $\theta > 0,$ and $0 < d' \leq d,$
\begin{align*}
    \theta^{-1}p(x)r^d \leq \P_{\sX}\{\B(x, r)\} \leq \theta p(x)r^d, \ \text{ and } \ \theta^{-1}q(x)r^{d'} \leq \Q\mathstrut_{\!\sX}\{\B(x, r)\} \leq \theta q(x) r^{d'},
\end{align*}
while keeping the upper bound $p\vee q \leq D.$ In principle, it is natural to expect that the case of dimensional mismatch could be encompassed by a more general weakening of \eqref{eq.minimal.maximal.mass}. We leave a precise treatment of this direction for future work.

\printbibliography

@article{kpotufe2021marginal,
    AUTHOR = {Kpotufe, Samory and Martinet, Guillaume},
     TITLE = {Marginal singularity and the benefits of labels in
              covariate-shift},
   JOURNAL = {Ann. Statist.},
  FJOURNAL = {The Annals of Statistics},
    VOLUME = {49},
      YEAR = {2021},
    NUMBER = {6},
     PAGES = {3299--3323},
      ISSN = {0090-5364,2168-8966},
   MRCLASS = {62G99 (62D99 68Q32 68T05)},
  MRNUMBER = {4352531},
       DOI = {10.1214/21-aos2084},
       URL = {https://doi.org/10.1214/21-aos2084},
}

@inproceedings{pathak2022new,
  title = 	 {A new similarity measure for covariate shift with applications to nonparametric regression},
  author =       {Pathak, Reese and Ma, Cong and Wainwright, Martin},
  booktitle = 	 {Proceedings of the 39th International Conference on Machine Learning},
  pages = 	 {17517--17530},
  year = 	 {2022},
  editor = 	 {Chaudhuri, Kamalika and Jegelka, Stefanie and Song, Le and Szepesvari, Csaba and Niu, Gang and Sabato, Sivan},
  volume = 	 {162},
  series = 	 {Proceedings of Machine Learning Research},
  month = 	 {07},
  publisher =    {PMLR},
  url = 	 {https://proceedings.mlr.press/v162/pathak22a.html}}

@book{Biau2015Lectures,
    AUTHOR = {Biau, G\'{e}rard and Devroye, Luc},
     TITLE = {Lectures on the nearest neighbor method},
    SERIES = {Springer Series in the Data Sciences},
 PUBLISHER = {Springer, Cham},
      YEAR = {2015},
     PAGES = {ix+290},
      ISBN = {978-3-319-25386-2},
   MRCLASS = {62G08 (60D05 60E15 62G07 62G20 62H30 68T05 68T10)},
  MRNUMBER = {3445317},
MRREVIEWER = {Christian\ Rau},
       DOI = {10.1007/978-3-319-25388-6},
       URL = {https://doi.org/10.1007/978-3-319-25388-6},
}

@book {MR1385671,
    AUTHOR = {van der Vaart, Aad W. and Wellner, Jon A.},
     TITLE = {Weak convergence and empirical processes},
    SERIES = {Springer Series in Statistics},
      NOTE = {With applications to statistics},
 PUBLISHER = {Springer-Verlag, New York},
      YEAR = {1996},
     PAGES = {xvi+508},
      ISBN = {0-387-94640-3},
   MRCLASS = {60F05 (60B12 62G30)},
  MRNUMBER = {1385671},
MRREVIEWER = {Miguel\ A.\ Arcones},
       DOI = {10.1007/978-1-4757-2545-2},
       URL = {https://doi.org/10.1007/978-1-4757-2545-2},
}

@book {MR2724368,
    AUTHOR = {Kosorok, Michael R.},
     TITLE = {Introduction to empirical processes and semiparametric
              inference},
    SERIES = {Springer Series in Statistics},
 PUBLISHER = {Springer, New York},
      YEAR = {2008},
     PAGES = {xiv+483},
      ISBN = {978-0-387-74977-8},
   MRCLASS = {62-02 (60B05 60B12 60F17 62Gxx)},
  MRNUMBER = {2724368},
MRREVIEWER = {Gutti\ J.\ Babu},
       DOI = {10.1007/978-0-387-74978-5},
       URL = {https://doi.org/10.1007/978-0-387-74978-5},
}

@article{shimodaira2000improving,
    AUTHOR = {Shimodaira, Hidetoshi},
     TITLE = {Improving predictive inference under covariate shift by
              weighting the log-likelihood function},
   JOURNAL = {J. Statist. Plann. Inference},
  FJOURNAL = {Journal of Statistical Planning and Inference},
    VOLUME = {90},
      YEAR = {2000},
    NUMBER = {2},
     PAGES = {227--244},
      ISSN = {0378-3758,1873-1171},
   MRCLASS = {62B10 (62F15)},
  MRNUMBER = {1795598},
       DOI = {10.1016/S0378-3758(00)00115-4},
       URL = {https://doi.org/10.1016/S0378-3758(00)00115-4},
}

@article{li2020transfer,
title = {Transfer learning in computer vision tasks: Remember where you come from},
journal = {Image and Vision Computing},
volume = {93},
pages = {103853},
year = {2020},
issn = {0262-8856},
doi = {https://doi.org/10.1016/j.imavis.2019.103853},
url = {https://www.sciencedirect.com/science/article/pii/S0262885619304469},
author = {Xuhong Li and Yves Grandvalet and Franck Davoine and Jingchun Cheng and Yin Cui and Hang Zhang and Serge Belongie and Yi-Hsuan Tsai and Ming-Hsuan Yang}}

@incollection{wang2022transfer,
author="Wang, Jindong
and Chen, Yiqiang",
title="Transfer Learning for Computer Vision",
bookTitle="Introduction to Transfer Learning: Algorithms and Practice",
year="2023",
publisher="Springer Nature Singapore",
address="Singapore",
pages="265--273"}

@inproceedings{ruder2019transfer,
    title = "Transfer Learning in Natural Language Processing",
    author = "Ruder, Sebastian  and
      Peters, Matthew E.  and
      Swayamdipta, Swabha  and
      Wolf, Thomas",
    editor = "Sarkar, Anoop  and
      Strube, Michael",
    booktitle = "Proceedings of the 2019 Conference of the North {A}merican Chapter of the Association for Computational Linguistics: Tutorials",
    month = {06},
    year = "2019",
    address = "Minneapolis, Minnesota",
    publisher = "Association for Computational Linguistics",
    url = "https://aclanthology.org/N19-5004",
    doi = "10.18653/v1/N19-5004",
    pages = "15--18"}

@article{ali2021enhanced,
title = {An enhanced technique of skin cancer classification using deep convolutional neural network with transfer learning models},
journal = {Machine Learning with Applications},
volume = {5},
pages = {100036},
year = {2021},
issn = {2666-8270},
doi = {https://doi.org/10.1016/j.mlwa.2021.100036},
url = {https://www.sciencedirect.com/science/article/pii/S2666827021000177},
author = {Md Shahin Ali and Md Sipon Miah and Jahurul Haque and Md Mahbubur Rahman and Md Khairul Islam}}

@article{ebbehoj2022transfer,
    doi = {10.1371/journal.pdig.0000014},
    author = {Ebbehoj, Andreas AND Thunbo, Mette {\O}stergaard AND Andersen, Ole Emil AND Glindtvad, Michala Vilstrup AND Hulman, Adam},
    journal = {PLOS Digital Health},
    publisher = {Public Library of Science},
    title = {Transfer learning for non-image data in clinical research: A scoping review},
    year = {2022},
    month = {02},
    volume = {1},
    url = {https://doi.org/10.1371/journal.pdig.0000014},
    pages = {1-22}}

@misc{wainwrighthighdimstat,
    AUTHOR = {Wainwright, Martin J.},
     TITLE = {High-Dimensional Statistics},
    SERIES = {Cambridge Series in Statistical and Probabilistic Mathematics},
    VOLUME = {48},
      NOTE = {A non-asymptotic viewpoint},
 PUBLISHER = {Cambridge University Press, Cambridge},
      YEAR = {2019},
     PAGES = {xvii+552},
      ISBN = {978-1-108-49802-9},
   MRCLASS = {62-01 (60B20 60E15 60Fxx 62Gxx 62Hxx 62Jxx)},
  MRNUMBER = {3967104},
MRREVIEWER = {Pierre Alquier},
       DOI = {10.1017/9781108627771},
       URL = {https://doi.org/10.1017/9781108627771},
}

@book {MR2724359,
    AUTHOR = {Tsybakov, Alexandre B.},
     TITLE = {Introduction to nonparametric estimation},
    SERIES = {Springer Series in Statistics},
      NOTE = {Revised and extended from the 2004 French original,
              Translated by Vladimir Zaiats},
 PUBLISHER = {Springer, New York},
      YEAR = {2009},
     PAGES = {xii+214},
      ISBN = {978-0-387-79051-0},
   MRCLASS = {62-01 (62G05 62G07 62G08 62G20)},
  MRNUMBER = {2724359},
       DOI = {10.1007/b13794},
       URL = {https://doi.org/10.1007/b13794},
}

@article{schmidt2022local,
 author = {Schmidt-Hieber, Johannes and Zamolodtchikov, Petr},
 title = {Local convergence rates of the nonparametric least squares estimator with applications to transfer learning},
 fjournal = {Bernoulli},
 journal = {Bernoulli},
 issn = {1350-7265},
 volume = {30},
 number = {3},
 pages = {1845--1877},
 year = {2024},
 language = {English},
 doi = {10.3150/23-BEJ1655},
 url = {projecteuclid.org/journals/bernoulli/volume-30/issue-3/Local-convergence-rates-of-the-nonparametric-least-squares-estimator-with/10.3150/23-BEJ1655.full},
 zbMATH = {7874403}
}

@article{zhou2025synergistic,
  title={On a synergistic learning phenomenon in nonparametric domain adaptation},
  author={Zhou, Ling and Yang, Yuhong},
  journal={arXiv preprint arXiv:2511.17009},
  year={2025}
}

@article{trottner2024covariate,
    author = {Trottner, Lukas},
    title = {Covariate shift in nonparametric regression with Markovian design},
    journal = {Information and Inference: A Journal of the IMA},
    volume = {13},
    number = {2},
    pages = {iaae011},
    year = {2024},
    month = {05},
    issn = {2049-8772},
    doi = {10.1093/imaiai/iaae011},
    url = {https://doi.org/10.1093/imaiai/iaae011},
    eprint = {https://academic.oup.com/imaiai/article-pdf/13/2/iaae011/57420104/iaae011.pdf},
}

@article{kahane1969trois,
 author = {Kahane, J.-P.},
 title = {Three notes on perfect linear sets},
 fjournal = {L'Enseignement Math{\'e}matique. 2e S{\'e}rie},
 journal = {Enseign. Math. (2)},
 issn = {0013-8584},
 volume = {15},
 pages = {185--192},
 year = {1969},
 language = {French},
 zbMATH = {3280241},
 Zbl = {0175.33902}
}

@article{zamolodtchikov2024transfer,
  title={Transfer Learning under Covariate Shift: Local $ k $-Nearest Neighbours Regression with Heavy-Tailed Design},
  author={Zamolodtchikov, Petr and Hang, Hanyuan},
  journal={arXiv preprint arXiv:2401.11554},
  year={2024}
}

@article{zhu2025,
author = {Zhu, Zhengyu and Yan, Yibo and Li, Gefei and Zhang, Riquan},
title = {Recent Developments on Statistical Transfer Learning},
journal = {International Statistical Review},
volume = {n/a},
number = {n/a},
pages = {},
year = {2025},
doi = {https://doi.org/10.1111/insr.12613},
url = {https://onlinelibrary.wiley.com/doi/abs/10.1111/insr.12613},
eprint = {https://onlinelibrary.wiley.com/doi/pdf/10.1111/insr.12613}
}

@article{ma2023optimally,
 author = {Ma, Cong and Pathak, Reese and Wainwright, Martin J.},
 title = {Optimally tackling covariate shift in {RKHS}-based nonparametric regression},
 fjournal = {The Annals of Statistics},
 journal = {Ann. Stat.},
 issn = {0090-5364},
 volume = {51},
 number = {2},
 pages = {738--761},
 year = {2023},
 language = {English},
 doi = {10.1214/23-AOS2268},
 zbMATH = {7714179},
 Zbl = {1539.62116}
}

@article{cai2024transfer,
 author = {Cai, T. Tony and Kim, Dongwoo and Pu, Hongming},
 title = {Transfer learning for functional mean estimation: phase transition and adaptive algorithms},
 fjournal = {The Annals of Statistics},
 journal = {Ann. Stat.},
 issn = {0090-5364},
 volume = {52},
 number = {2},
 pages = {654--678},
 year = {2024},
 language = {English},
 doi = {10.1214/24-AOS2362},
 zbMATH = {7860628},
 Zbl = {1539.62354}
}

@article{mokhtar2025fine,
  title = {Fine-tuning machine-learned particle-flow reconstruction for new detector geometries in future colliders},
  author = {Mokhtar, Farouk and Pata, Joosep and Garcia, Dolores and Wulff, Eric and Zhang, Mengke and Kagan, Michael and Duarte, Javier},
  journal = {Phys. Rev. D},
  volume = {111},
  issue = {9},
  pages = {092015},
  numpages = {17},
  year = {2025},
  month = {05},
  publisher = {American Physical Society},
  doi = {10.1103/PhysRevD.111.092015},
  url = {https://link.aps.org/doi/10.1103/PhysRevD.111.092015}
}

@article{camaiani2022model,
  title = {Model independent measurements of standard model cross sections with domain adaptation},
  volume = {82},
  ISSN = {1434-6052},
  url = {http://dx.doi.org/10.1140/epjc/s10052-022-10871-3},
  DOI = {10.1140/epjc/s10052-022-10871-3},
  number = {10},
  journal = {The European Physical Journal C},
  publisher = {Springer Science and Business Media LLC},
  author = {Camaiani,  Benedetta and Seidita,  Roberto and Anderlini,  Lucio and Ceccarelli,  Rudy and Ciulli,  Vitaliano and Lenzi,  Piergiulio and Lizzo,  Mattia and Viliani,  Lorenzo},
  year = {2022},
  month = {10} 
}

@inproceedings{mallinar2024minimum,
author = {Mallinar, Neil and Zane, Austin and Frei, Spencer and Yu, Bin},
title = {Minimum-norm interpolation under covariate shift},
year = {2024},
publisher = {JMLR.org},
booktitle = {Proceedings of the 41st International Conference on Machine Learning},
articleno = {1405},
numpages = {43},
location = {Vienna, Austria},
series = {ICML'24}
}

@article{gogolashvili2023importance,
  title={When is Importance Weighting Correction Needed for Covariate Shift Adaptation?},
  author={Gogolashvili, Davit and Zecchin, Matteo and Kanagawa, Motonobu and Kountouris, Marios and Filippone, Maurizio},
  journal={arXiv preprint arXiv:2303.04020},
  year={2023}
}

@article{liu2025spectral,
  title={Spectral Algorithms in Misspecified Regression: Convergence under Covariate Shift},
  author={Liu, Ren-Rui and Guo, Zheng-Chu},
  journal={arXiv preprint arXiv:2509.05106},
  year={2025}
}

@article{feng2024deep,
  author  = {Xingdong Feng and Xin He and Yuling Jiao and Lican Kang and Caixing Wang},
  title   = {Deep Nonparametric Quantile Regression under Covariate Shift},
  journal = {Journal of Machine Learning Research},
  year    = {2024},
  volume  = {25},
  number  = {385},
  pages   = {1--50},
  url     = {http://jmlr.org/papers/v25/24-0906.html}
}

@article{xu2025estimating,
  title={Estimating unbounded density ratios: Applications in error control under covariate shift},
  author={Xu, Shuntuo and Yu, Zhou and Huang, Jian},
  journal={arXiv preprint arXiv:2504.01031},
  year={2025}
}

@article{della2025computational,
  title={Computational efficiency under covariate shift in kernel ridge regression},
  author={Della Vecchia, Andrea and Watusadisi, Arnaud Mavakala and De Vito, Ernesto and Rosasco, Lorenzo},
  journal={arXiv preprint arXiv:2505.14083},
  year={2025}
}

@article{auddy2025minimax,
    author = {Auddy, Arnab and Cai, T Tony and Chakraborty, Abhinav},
    title = {Minimax and adaptive transfer learning for nonparametric classification under distributed differential privacy constraints},
    journal = {Journal of the Royal Statistical Society Series B: Statistical Methodology},
    pages = {qkaf070},
    year = {2025},
    month = {11},
    issn = {1369-7412},
    doi = {10.1093/jrsssb/qkaf070},
    url = {https://doi.org/10.1093/jrsssb/qkaf070},
    eprint = {https://academic.oup.com/jrsssb/advance-article-pdf/doi/10.1093/jrsssb/qkaf070/65272455/qkaf070.pdf},
}

@article{cai2024transferbandit,
 author = {Cai, Changxiao and Cai, T. Tony and Li, Hongzhe},
 title = {Transfer learning for contextual multi-armed bandits},
 fjournal = {The Annals of Statistics},
 journal = {Ann. Stat.},
 issn = {0090-5364},
 volume = {52},
 number = {1},
 pages = {207--232},
 year = {2024},
 language = {English},
 doi = {10.1214/23-AOS2341},
 url = {projecteuclid.org/journals/annals-of-statistics/volume-52/issue-1/Transfer-learning-for-contextual-multi-armed-bandits/10.1214/23-AOS2341.full},
 zbMATH = {7815256},
 Zbl = {1539.62097}
}

@inproceedings{chen2024high,
author = {Chen, Yihang and Liu, Fanghui and Suzuki, Taiji and Cevher, Volkan},
title = {High-dimensional kernel methods under covariate shift: data-dependent implicit regularization},
year = {2024},
publisher = {JMLR.org},
booktitle = {Proceedings of the 41st International Conference on Machine Learning},
articleno = {273},
numpages = {22},
location = {Vienna, Austria},
series = {ICML'24}
}

@article{sugiyama2007covariate,
  author  = {Masashi Sugiyama and Matthias Krauledat and Klaus-Robert M{{\"u}}ller},
  title   = {Covariate Shift Adaptation by Importance Weighted Cross Validation},
  journal = {Journal of Machine Learning Research},
  year    = {2007},
  volume  = {8},
  number  = {35},
  pages   = {985--1005},
  url     = {http://jmlr.org/papers/v8/sugiyama07a.html}
}

@article{song2024generalization,
  title={Generalization error of min-norm interpolators in transfer learning},
  author={Song, Yanke and Bhattacharya, Sohom and Sur, Pragya},
  journal={arXiv preprint arXiv:2406.13944},
  year={2024}
}

@article{blanchard2024estimation,
  title={Estimation of multiple mean vectors in high dimension},
  author={Blanchard, Gilles and Fermanian, Jean-Baptiste and Marienwald, Hannah},
  journal={arXiv preprint arXiv:2403.15038},
  year={2024}
}

@article{blanchard2021domain,
author = {Blanchard, Gilles and Deshmukh, Aniket Anand and Dogan, \"{U}run and Lee, Gyemin and Scott, Clayton},
title = {Domain generalization by marginal transfer learning},
year = {2021},
issue_date = {January 2021},
publisher = {JMLR.org},
volume = {22},
number = {1},
issn = {1532-4435},
journal = {Journal of Machine Learning Research},
month = {01},
articleno = {2},
numpages = {55},
url = {http://jmlr.org/papers/v22/17-679.html}
}

\newpage

\appendix

\section{Preliminary Results} 
\label{app.prelim}

\subsection{Uniform control on neighbours}

\begin{proposition}\label{prop.covering.number}
Let $\mB := \{\B(x, r) : x \in \RR^d, r > 0 \}$. 
The VC dimension of  $\mB$ is $2d + 1$. 
Moreover, there exists a universal constant $K$ such that for any $\eps \in (0, 1)$, 
\begin{align*}
\mathcal{N}(\mathds{1}_{\mB}, \|\cdot\|_{L_1(\Q)}, \eps)
\leq K (2d+1) (4e)^{2d+1} \eps^{-2d}
\end{align*}
holds for any probability measure $\Q,$ where $\mathds{1}_{\mB} := \{\mathds{1}(\cdot \in B), B \in \mB\}.$
\end{proposition}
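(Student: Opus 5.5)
The plan is to obtain the combinatorial statement first and then deduce the covering bound from it through the standard uniform entropy theorem for VC classes. For the \emph{upper bound} on the VC dimension, write every closed ball as a sublevel set:
\begin{align*}
\B(c, r) = \{x : \|x\|^2 - 2\langle c, x\rangle + \|c\|^2 - r^2 \leq 0\}.
\end{align*}
So $\mB$ is contained in the class of sets $\{x : g(x) \leq 0\}$, where $g$ ranges over the real vector space $\mathcal{G}$ spanned by $x \mapsto \|x\|^2$, the coordinates $x_1, \dots, x_d$, and the constants. This space has dimension $d+2$.

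I will then invoke Dudley's lemma: if $\mathcal{G}$ is a finite-dimensional vector space of functions, then the class $\{\{g \leq 0\} : g \in \mathcal{G}\}$ cannot shatter any set of $\dim \mathcal{G}$ points. The proof is the usual linear-algebra argument. For points $z_1, \dots, z_{d+2}$, the evaluation map $g \mapsto (g(z_i))_i$ is not onto $\RR^{d+2}$ if the $\|x\|^2$ direction is used together with a fixed sign. Concretely, one finds a nonzero $\lambda \perp$ the image with some $\lambda_i > 0$. Then the set $\{z_i : \lambda_i > 0\}$ cannot be cut out exactly, because $\sum_i \lambda_i g(z_i) = 0$ forces incompatible signs. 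This gives $\mathrm{VC}(\mB) \leq d+1 \leq 2d+1$, which is the direction actually used downstream.

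For the \emph{matching lower bound} as worded, I would exhibit a shattered configuration. For $d=1$, $\mB$ is the class of closed intervals. Any two points are shattered, and under the index convention of \cite{MR1385671} (the smallest $n$ such that no $n$-point set is shattered) the value is $3 = 2d+1$, so the statement holds exactly. For general $d$, I would take the $d+1$ vertices of a regular simplex. Each subset is separated from its complement by a hyperplane, and a very large ball approximating the corresponding halfspace picks out that subset, so $\mathrm{VC}(\mB) \geq d+1$. I expect this step to be the main obstacle. Together with Dudley's bound, this argument pins down the sharp value $d+1$ (index $d+2$), which coincides with $2d+1$ only when $d=1$. For $d \geq 2$, I therefore do not see how to establish the literal equality, and I would record $2d+1$ as the (valid) upper bound that feeds the entropy estimate.

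Finally, for the covering number I will apply \cite[Theorem 2.6.4]{MR1385671}. For a VC class $\mathcal{C}$ of index $V$, this theorem gives, for every probability measure $\Q$ and $\eps \in (0,1)$,
\begin{align*}
\mN(\mathds{1}_{\mathcal{C}}, \|\cdot\|_{L_1(\Q)}, \eps) \leq K V (4e)^V \eps^{-(V-1)},
\end{align*}
with $K$ universal. The right-hand side is monotone in $V$, so it suffices to plug in the upper bound $V \leq 2d+1$ from the previous paragraph. This yields $K(2d+1)(4e)^{2d+1}\eps^{-2d}$, uniformly in $\Q$. Measurability is not an issue here, because the class is image-admissible: balls are parametrized continuously by $(c, r)$.
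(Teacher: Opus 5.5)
Your reading of the statement is correct, and more careful than the paper's proof. That proof only cites van der Vaart--Wellner (Example 2.6.1) for the VC claim and Kosorok (Theorem 9.2) for the covering bound. Closed Euclidean balls in $\RR^d$ have VC dimension $d+1$, i.e.\ index $d+2$. So ``$2d+1$'' is right only as an upper bound, with equality (in the index convention) only when $d=1$; $2d+1$ is the index of axis-parallel boxes. Only this upper bound is ever used: the constant $K_0$ in Proposition~\ref{prop.Bernstein.general} comes from the covering bound. Your last step, van der Vaart--Wellner Theorem 2.6.4 together with monotonicity of $KV(4e)^V\eps^{-(V-1)}$ in $V$, gives the same bound as the Kosorok theorem the paper cites, so it is exactly the paper's route. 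Your lower-bound construction is fine but not needed.

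The gap is in your proof of the upper bound, which is the step that actually matters; it is wrong in two places.

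\emph{Dudley's lemma is misquoted.} A $k$-dimensional space $\mathcal{G}$ \emph{can} shatter $k$ points. For instance, the constants shatter a singleton. For $d=1$, the sets $\{g\le 0\}$ with $g\in\mathrm{span}\{x^2,x,1\}$ include $(-\infty,\tfrac12]\cup[\tfrac32,\infty)$ and shatter $\{0,1,2\}$, since evaluation at these points is bijective. The correct lemma excludes $k+1$ points, which for your $\mathcal{G}$ only gives VC dimension $\le d+2$.

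\emph{The fixed-sign refinement is mis-normalised.} Write $g=\|x\|^2+h$ with $h\in\mathcal{G}_0:=\mathrm{span}\{1,x_1,\dots,x_d\}$. Take $\lambda\neq 0$ orthogonal to $\{(h(z_i))_i: h\in\mathcal{G}_0\}$, which exists since $\dim\mathcal{G}_0=d+1$. Then $\sum_i\lambda_i g(z_i)=c:=\sum_i\lambda_i\|z_i\|^2$, not $0$. Choosing $\lambda$ merely so that ``some $\lambda_i>0$'' does not work. For $d=1$, $z=(0,1,2)$, $\lambda=(-1,2,-1)$, one gets $c=-2$, and $\{z_i:\lambda_i>0\}=\{1\}$ is cut out by $\B(1,\tfrac12)$.

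\emph{The repair.} Since $1\in\mathcal{G}_0$, we have $\sum_i\lambda_i=0$, so $\lambda$ has entries of both signs. Flip $\lambda$ if necessary so that $c\ge 0$. Suppose a ball $\{g\le 0\}$ cut out exactly $\{z_i:\lambda_i>0\}$. Then $\lambda_i g(z_i)\le 0$ when $\lambda_i>0$, and $\lambda_i g(z_i)<0$ when $\lambda_i<0$. Hence $c=\sum_i\lambda_i g(z_i)<0$, a contradiction. This gives VC dimension $\le d+1$.

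Alternatively, the correctly stated crude Dudley bound gives index $\le d+3\le 2d+1$ for $d\ge 2$. The case $d=1$ is the interval case, where no three points are shattered. This already suffices for the covering estimate.
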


\begin{proof}
The first result of VC dimension follows from Example 2.6.1 in \cite{MR1385671}. The second result on the covering number follows from Theorem 9.2 in \cite{MR2724368}.
\end{proof}

\begin{lemma}[Bernstein's Inequality]
    Let $X_1, \dots, X_n$ be $n$ i.i.d.\ random variables with values in $\RR.$ If $\PP\{|X_1| \leq C\} = 1$ for some $C > 0,$ and $\E[X_1] = \mu,$ then, for all $t > 0,$
    \begin{align}
        \label{eq.Bernstein}
        \PP\bigg\{\Big|\frac 1n \sum_{i=1}^n X_i - \mu\Big| \geq t\bigg\} \leq 2\exp\bigg(-\frac{nt^2}{2\sigma^2 + 2Ct/3}\bigg),
    \end{align}
    where $\sigma^2 = \Var(X_1).$
\end{lemma}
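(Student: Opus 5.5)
The plan is the Cramér--Chernoff method: bound the moment generating function of the centred variables $Z_i := X_i - \mu$, then optimise the exponent. I will handle each tail separately, since both have the same form.

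\textbf{Step 1 (Chernoff bound).} For $\lambda > 0$, Markov's inequality applied to $e^{\lambda \sum_i Z_i}$ and independence give
\begin{align*}
\PP\Big\{\frac1n\sum_{i=1}^n Z_i \geq t\Big\} \leq \exp\big(-n\lambda t\big)\,\E\big[e^{\lambda Z_1}\big]^n .
\end{align*}

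\textbf{Step 2 (MGF bound).} Suppose $|Z_1| \leq b$ almost surely. Expanding the exponential and using $|\E[Z_1^k]| \leq \sigma^2 b^{k-2}$ together with $k! \geq 2\cdot 3^{k-2}$ gives
\begin{align*}
\E\big[e^{\lambda Z_1}\big] \leq 1 + \frac{\lambda^2\sigma^2}{2}\sum_{k\geq 2}\Big(\frac{\lambda b}{3}\Big)^{k-2} \leq \exp\Big(\frac{\lambda^2\sigma^2}{2(1-\lambda b/3)}\Big), \qquad 0<\lambda<3/b.
\end{align*}

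\textbf{Step 3 (optimising $\lambda$).} Take $\lambda = t/(\sigma^2 + bt/3)$. This choice satisfies $\lambda < 3/b$, and it yields the one-sided bound $\exp\big(-nt^2/(2\sigma^2 + 2bt/3)\big)$. Applying the same argument to $-Z_i$ and taking a union bound supplies the factor $2$.

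\textbf{Main obstacle: the constant $b$.} Under the stated hypothesis $|X_1| \leq C$, we only have $|Z_1| \leq C + |\mu| \leq 2C$. The argument above therefore delivers the denominator $2\sigma^2 + 4Ct/3$, not $2\sigma^2 + 2Ct/3$.

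Before trying to sharpen this, I would first test whether the stated constant can hold at all. Take $X = C$ with probability $p$ and $X = -C$ otherwise, in the Poisson regime $np = 1$, with deviation $t = 2Ck/n$. The stated bound becomes $\exp\big(-k^2/(2 + k/3)\big)$. For $k = 10$ this is about $7\cdot 10^{-9}$. The true probability $\PP\{\mathrm{Poisson}(1) \geq 11\}$ is about $10^{-8}$, which is larger. So the display with $2Ct/3$ appears to be false under $|X_1| \leq C$ alone, and no refinement of Steps 1--3 can rescue it.

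I would therefore prove the inequality in one of two corrected forms. The first keeps $2Ct/3$ under the hypothesis $|X_1 - \mu| \leq C$. The second keeps $|X_1| \leq C$ and replaces the term by $4Ct/3$. Either form only changes absolute constants wherever Bernstein's inequality is applied to bounded indicator-type variables in the appendix.
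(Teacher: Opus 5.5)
Your Chernoff argument is correct and is the standard proof. With $|Z_1|\le b$, the bound $|\E[Z_1^k]|\le \sigma^2 b^{k-2}$ and $k!\ge 2\cdot 3^{k-2}$ give the MGF bound. The choice $\lambda=t/(\sigma^2+bt/3)$ then yields exactly $\exp(-nt^2/(2\sigma^2+2bt/3))$. It satisfies $\lambda<3/b$ whenever $\sigma^2>0$, and the case $\sigma^2=0$ is trivial.

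The paper gives no proof of this lemma; it is quoted as a standard tool, so there is no competing route to compare. You are right that the constant $2Ct/3$ needs $|X_1-\mu|\le C$ rather than $|X_1|\le C$. In the paper's only use (Proposition~\ref{prop.Bernstein.general}) the summands are indicators with mean $h$, so $|X_i-\mu|\le 1$. Your first corrected form with $C=1$ then gives exactly the denominator $2h+2t/3$ used in \eqref{eq.Bernstein.union.bound}, so nothing downstream changes, not even constants.

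The genuine error is in your numerical witness. You compared $\PP\{\mathrm{Poisson}(1)\ge 11\}\approx 1.0\cdot 10^{-8}$ against $\exp(-k^2/(2+k/3))$, but the lemma's right-hand side is $2\exp(\cdot)$. At $k=10$ that is about $1.44\cdot 10^{-8}$, which exceeds the true probability, so there is no contradiction. The lower tail is empty in your example, so you cannot drop the factor $2$.

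In fact no $k$ works at $np=1$. The closest case, $k=14$, has right-hand side $\approx 3.4\cdot 10^{-13}$ against a true tail $\approx 3.0\cdot 10^{-13}$.

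The fix is to move away from $np=1$. With $np=\lambda$ and $t=2Ck/n$:
\begin{itemize}
\item the stated right-hand side is $2\exp\bigl(-k^2/(2\lambda(1-p)+k/3)\bigr)$;
\item the left-hand side is at least $\PP\{\mathrm{Bin}(n,p)\ge \lambda+k\}$.
\end{itemize}
Take $\lambda=2$, $k=20$. The right-hand side is below $2e^{-37.5}\approx 1.0\cdot 10^{-16}$ for every $n$. The left-hand side tends to $\PP\{\mathrm{Poisson}(2)\ge 22\}\approx 5.5\cdot 10^{-16}$. So the stated inequality fails for all large $n$; already $\lambda=1.5$, $k=13.5$ gives a violation by a factor of about $1.5$. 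With this replacement your conclusion stands, and your corrected statements are the right ones to prove.
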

\begin{definition}
    For $\P \in \mM,$ and $h > 0,$ we define the function $\zeta_h^{\P}$ as 
    \begin{align}
        \label{eq.zeta.definition}
        \zeta_h^{\P}\colon x\mapsto \inf\Big\{r > 0: \P\{\B(x, r)\} \geq h\Big\}.
    \end{align}
    When the considered distribution $\P$ is clear from the context, we will drop the $\P$ exponent in the notations and write $\zeta_h$ instead.
\end{definition}
Next, we prove a uniform bound for the distances to $k$-th neighbours. Let $\P \in \mM, \ n \geq 2d, \ X_1, \dots, X_n$ be $n$ i.i.d.\ random variables with distribution $\P.$ Denote by $R_k(x)$ the distance from $x$ to its $k$-th nearest neighbour among $\{X_1, \dots X_n\}.$ Let $\delta, h \in (0, 1).$
\begin{proposition}
    \label{prop.Bernstein.general}
    Let $\delta, h > 0,$ and $K_0 = 2K(2d + 1)(4e)^{2d+1},$ where $K$ is the absolute constant defined in Proposition \ref{prop.covering.number}. The two following assertions hold.
    \begin{enumerate}
        \item [$(i)$] If $\lambda \in (0, 1)$ and the integer $n$ satisfies
            \begin{align}
                \label{eq.Bernstein.nk.condition}
                n &\geq \frac{4d(1 + \lambda/3)}{\lambda^2h}\log\bigg(\Big(\frac{K_0}{\delta}\Big)^{1/(2d)}\frac{2}{(1 - \lambda)h}\bigg),
            \end{align}
            then
            \begin{align*}
                \PP\Bigg\{\inf_{x \in \RR^d}\frac{1}{nh}\sum_{i=1}^n \mathds{1}(X_i \in \B(x, \zeta_h(x))) < \frac{(1 - \lambda)}{2}\Bigg\} \leq \delta.
            \end{align*}
        \item [$(ii)$] If $\lambda \in \RR_+^*$ and the integer $n$ satisfies
            \begin{align}
                \label{eq.Bernstein.nk.lower.condition}
                n &\geq \frac{4d(1 + \lambda/3)}{\lambda^2h}\log\bigg(\Big(\frac{K_0}{\delta}\Big)^{1/(2d)}\frac{1}{(1 + \lambda)h}\bigg),
            \end{align}
            then,
            \begin{align*}
                \PP\Bigg\{\sup_{x \in \RR^d}\frac{1}{nh}\sum_{i=1}^n \mathds{1}(X_i \in \B(x, \zeta_h(x))) > 2(1 + \lambda)\Bigg\} \leq \delta.
            \end{align*}
    \end{enumerate}
\end{proposition}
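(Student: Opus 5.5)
The plan is a discretisation-plus-union-bound argument. The estimate is applied to a finite family of balls built from Proposition \ref{prop.covering.number}, and the lower and upper tails of each empirical mass are controlled by Bernstein's inequality \eqref{eq.Bernstein}. The first observation is that $\P \in \mM$ has a Lebesgue density, so $r \mapsto \P\{\B(x,r)\}$ is continuous. Hence $\P\{\B(x,\zeta_h(x))\} = h$ exactly for every $x$, and the family $\mathcal{C}_h := \{\B(x,\zeta_h(x)) : x \in \RR^d\}$ consists of balls of $\P$-mass exactly $h$.

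\emph{Step 1: finite net.} Fix $\eps := (1-\lambda)h/2$ for $(i)$ and $\eps := (1+\lambda)h$ (up to constants) for $(ii)$. By Proposition \ref{prop.covering.number} applied with $\Q = \P$, there are at most $N_\eps \le (K_0/2)\eps^{-2d}$ balls $B_1,\dots,B_{N_\eps}$ with the following property: every $B \in \mathcal{C}_h$ has some $j$ with $\P(B \,\Delta\, B_j) \le \eps$. In particular $\P(B_j) \in [h-\eps, h+\eps]$.

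\emph{Step 2: Bernstein on the net.} For each $j$, the variables $\mathds{1}(X_i \in B_j)$ are bounded by $1$ and have variance at most $\P(B_j) \le 2h$. Apply \eqref{eq.Bernstein} with deviation $t = \lambda h$ and take a union bound over the $N_\eps$ balls. The failure probability is at most $2N_\eps\exp\big(-n\lambda^2 h^2/(4h + 2\lambda h/3)\big)$. Requiring this to be at most $\delta$ and taking logarithms gives exactly the form of \eqref{eq.Bernstein.nk.condition}: the term $\log\big((K_0/\delta)^{1/(2d)}\eps^{-1}\big)$ multiplied by $4d(1+\lambda/3)/(\lambda^2 h)$. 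So on an event of probability at least $1-\delta$, every net ball has empirical mass within $\lambda h$ of its true mass, and the choice of $\eps$ produces the constants $2/((1-\lambda)h)$ and $1/((1+\lambda)h)$.

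\emph{Step 3: transfer from the net to all of $\mathcal{C}_h$.} This is the main obstacle, because an $L_1(\P)$-approximation does not control the empirical mass of the symmetric difference. The fix I would try first is to replace two-sided approximation by one-sided approximation, exploiting that balls centred at the same point are nested and $\P$-mass is continuous in the radius.

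For $(i)$, I would use an \emph{inner} approximation. For each $x$, let $\B(x,r')$ with $r' \le \zeta_h(x)$ be the ball of mass $h/2$. Its nearest net ball $B_j$ in $L_1(\P)$ then satisfies $\P(B_j) \ge h/2 - \eps$, and, up to shrinking $B_j$ by a controlled amount, $B_j \subseteq \B(x,\zeta_h(x))$. Implementing this requires re-running Step 1 on a net of balls of mass $h/2-\eps$ that are chosen inside their targets. Monotonicity then gives $\P_n(\B(x,\zeta_h(x))) \ge \P_n(B_j) \ge (1-\lambda)h/2$, and the factor $1/2$ in $(i)$ is exactly the price of this inner approximation.

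For $(ii)$, I would symmetrically use an \emph{outer} ball of mass at most $2h$ containing $\B(x,\zeta_h(x))$. This yields the bound $2(1+\lambda)$.

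If the inner/outer net cannot be produced directly from Proposition \ref{prop.covering.number}, the fallback is to cover the class of differences $\{B \setminus B'\}$ of two balls, whose VC dimension is still $O(d)$. With that cover, one applies Bernstein additionally to the small sets $B_j \setminus B$, uniformly, at the cost of only changing $K_0$.
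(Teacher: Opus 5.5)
Steps 1 and 2 are fine in outline, but Step 3 is a genuine gap, and neither of your proposed fixes closes it. With a net fixed in $L_1(\P)$, Bernstein controls $\P_n(B_j)$ only for the finitely many net balls. To reach an arbitrary $B\in\mathcal{C}_h$ you need $\P_n(B\,\Delta\,B_j)$ to be small simultaneously for all $B$. That is a uniform statement over an uncountable VC class, of exactly the kind you are trying to prove.

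The inner approximation does not supply it. The bound $\P(B_j\,\Delta\,\B(x,r'))\le\eps$ says nothing about where $B_j$ sits geometrically, since its centre need not be near $x$. So there is no controlled shrinking that puts $B_j$ inside $\B(x,\zeta_h(x))$ while keeping most of its mass: a ball centred near the boundary of $\B(x,\zeta_h(x))$ must shrink to almost nothing to fit. What you would need is a finite family of one-sided (inner or outer) approximants, i.e.\ a bracketing bound. Proposition~\ref{prop.covering.number} is a uniform covering bound for a VC class and does not give one, let alone with the constant $K_0$.

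The fallback is circular. Bounding $\P_n(B_j\setminus B)$ uniformly over all $B$ is again a uniform empirical-process bound, and covering the differences in $L_1(\P)$ only reproduces the transfer problem one level down.

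Even granting containment, the bookkeeping does not give the claim. With $\P(B_j)\ge h/2-\eps$, $t=\lambda h$ and $\eps=(1-\lambda)h/2$, you only get $\P_n(B_j)\ge-\lambda h/2$. Also, the variance bound $2h$ produces the factor $8d(1+\lambda/6)$ instead of $4d(1+\lambda/3)$ in the condition on $n$, so the derived condition on $n$ is not the one in the statement.

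The paper avoids the transfer problem by taking the net in the empirical norm $L_1(\P_n)$. Proposition~\ref{prop.covering.number} holds for every probability measure, so the cardinality bound is uniform in the sample. The net elements are $\B(Z_j,\zeta_h(Z_j))$, each of mass exactly $h$.

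Transfer is then the one-line bound $|\P_n(\B(x,\zeta_h(x)))-\P_n(\B(Z_j,\zeta_h(Z_j)))|\le\eps$. Bernstein with variance at most $h$ and $t=\lambda h$ gives $\sup_x|\P_n(\B(x,\zeta_h(x)))-h|\le\eps+t$. Choosing $\eps=(K_0/\delta)^{1/(2d)}\exp(-n\lambda^2h/(4d(1+\lambda/3)))$ turns the conditions on $n$ into $\eps\le(1-\lambda)h/2$ and $\eps\le(1+\lambda)h$. The $1/2$ and the $2$ therefore come from $h-\eps-t$ and $h+\eps+t$, not from halving or doubling balls.

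The price is that the net is data-dependent. The paper applies Bernstein to the random centres $Z_j$ and justifies this only by the deterministic bound on their number. Making that step rigorous requires the classical symmetrization (ghost-sample) argument, after which one works with the finitely many traces of balls on the $2n$ pooled points, where the transfer step is trivial. That symmetrization step, or an off-the-shelf uniform relative-deviation VC bound for balls, is the ingredient your Step 3 is missing.
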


\begin{proof}
    Let $z^m := (z_1, \dots, z_m) \in \RR^{d\times m}$ be an arbitrary vector of $m$ different points, denote $S(x, h) := \B(x, \zeta_h(x)),$ and define the set
    \begin{align*}
        \mI(h) := \bigg\{\mathds{1}(\cdot \in S(x, h))\bigg\}.
    \end{align*}
    Consider the event
    \begin{align*}
        A_n(t, z^m) := \bigcap_{j=1}^m\bigg\{\Big|\frac 1n \sum_{i=1}^n \mathds{1}\big(X_i \in S(z_j, h)\big) - h \Big|\leq t\bigg\}.
    \end{align*}
    Let $X \sim \P.$ For all $x \in \RR^d,$ we have
    \begin{enumerate}
        \item [$(i)$] $\E[\mathds{1}(X \in S(x, h))] = \P_{\sX}\{S(x, h)\} = \P_{\sX}\{\B(x, \zeta_h(x))\} = h$
        \item [$(ii)$] $\Var(\mathds{1}(X \in S(x, h))) = h(1 - h) \leq h.$
        \item [$(iii)$] $0 \leq \mathds{1}(X \in S(x, h)) \leq 1.$
    \end{enumerate}
    By the union bound and Bernstein's inequality \eqref{eq.Bernstein}, 
    \begin{align}
        \label{eq.Bernstein.union.bound}
        \PP\big\{A_n(t, z^m)^c\big\} &\leq \sum_{j=1}^m \PP\bigg\{\Big|\frac 1n \sum_{i=1}^n \mathds{1}\big(X_i \in S(z_j, h)\big) - h \Big|\geq t\bigg\} \leq 2m\exp\bigg(-\frac{nt^2}{2h + 2t/3}\bigg).
    \end{align}
    Let $\P_n := n^{-1}\sum_{i=1}^n \delta_{X_i}$ be the empirical distribution of the sample $X_1, \dots, X_n,$ and let $\eps \in (0, 1).$ Applying Proposition \ref{prop.covering.number} ensures that there exists an $\eps$-net over $(\mB, \|\cdot\|_{L_1(\P_n)})$ with finite cardinality $M = M(X^n),$ with $\|M\|_\infty \leq K_0\eps^{-2d}.$ As $\mI(h) \subseteq \mB,$ there exists an $\eps$-net over $(\mI(h), \|\cdot\|_{L_1(\P_n)})$ with finite (random) cardinality $\ol m = \ol m(X^n)$ upper bounded by $\|M\|_{\infty}.$  Let $Z_1, \dots Z_{\ol m}$ be such that this $\eps$-net is the set $\big\{S(Z_j, h), 1\leq j\leq \ol m\big\}.$ Since the upper bound on $m$ does not depend on the sample, the upper bound \eqref{eq.Bernstein.union.bound} is also valid when $z^m$ is replaced by $Z^{\ol m}.$ Hence, using the covering bound of Proposition \ref{prop.covering.number}, yields
    \begin{align*}
        \PP\big\{A_n(t, Z^{\ol m})^c\big\}&\leq K_0\eps^{-2d}\exp\bigg(-\frac{nt^2}{2h + 2t/3}\bigg).
    \end{align*}
    We now work on $A_n(t, Z^{\ol m}).$ The $\eps$-net property of the $Z_j$'s ensures that for all $x \in \RR^d,$ there exists $j \in [\ol m],$ such that
    \begin{align*}
        \big\|\mathds{1}(\cdot \in S(x, h)) - \mathds{1}(\cdot \in S(Z_j, h))\big\|_{L_1(\P_n)} \leq \eps,
    \end{align*}
    and, on $A_n(t, Z^{\ol m}),$ it follows that
    \begin{align*}
        \Big|\frac 1n \sum_{i=1}^n \mathds{1}(X_i \in S(x, h)) - h\Big| \leq \big\|\mathds{1}&(\cdot \in S(x, h)) - \mathds{1}(\cdot \in S(Z_j, h))\big\|_{L_1(\P_n)}\\
        &+ \Big|\frac 1n \sum_{i=1}^n \mathds{1}(X_i \in S(Z_j, h)) - h\Big|,
    \end{align*}
    where the first term is upper bounded by $\eps,$ while the definition of $A_n(t, Z^{\ol m})$ ensures that the second term is upper bounded by $t.$ The point $x \in \RR^d$ has been taken arbitrarily in $\RR^d,$ hence,
    \begin{align}
        \label{eq.two.sided.uniform.Bernstein}
        \sup_{x \in \RR^d}\Big|\frac 1n \sum_{i=1}^n \mathds{1}(X_i \in S(x, h)) - h\Big| \leq \eps + t.
    \end{align}
    We now proceed to prove the first claim. In particular, from \eqref{eq.two.sided.uniform.Bernstein}, it holds that
    \begin{align}
        \label{eq.0000}
        \inf_{x \in \RR^d}\frac 1n \sum_{i=1}^n \mathds{1}(X_i \in S(x, h)) \geq h -
        \eps - t.
    \end{align}
    Let $\delta, \lambda \in (0, 1).$ We pick $t = \lambda h,$ and
    \begin{align*}
        \eps = \delta^{-1/(2d)}K_0^{1/(2d)}\exp\bigg(-\frac{n\lambda^2h}{4d(1 + \lambda/3)}\bigg).
    \end{align*}
    One can check that for all $n$ satisfying
    \begin{align*}
        n \geq \frac{4d(1 + \lambda/3)}{\lambda^2h}\log\bigg(\frac{2}{(1 - \lambda)h}\Big(\frac{K_0}{\delta}\Big)^{1/(2d)}\bigg),
    \end{align*}
    it holds that $\eps \leq (1 - \lambda)h/2.$ Finally, plugging these values in \eqref{eq.0000} shows that, on $A_n(t, Z^{\ol m}),$
    \begin{align*}
        \inf_{x \in \RR^d}\frac 1{n} \sum_{i=1}^n \mathds{1}(X_i \in S(x, h)) \geq \frac{(1 - \lambda)h}2.
    \end{align*}
    This proves that
    \begin{align*}
        \PP\Bigg\{\inf_{x \in \RR^d}\frac 1{nh} \sum_{i=1}^n \mathds{1}(X_i \in S(x, h)) < \frac{(1 - \lambda)}2\Bigg\} < \PP\big\{A_n(t, Z^{\ol m})^c\big\},
    \end{align*}
    and it can be checked that $\PP\big\{A_n(t, Z^{\ol m})^c\big\} \leq \delta,$ which finishes the proof of the first claim. We now prove the second claim, which is a slight variation of the proof of the first one. A consequence of \eqref{eq.two.sided.uniform.Bernstein} is that
    \begin{align*}
        \sup_{x \in \RR^d}\frac 1n \sum_{i=1}^n \mathds{1}(X_i \in S(x, h)) \leq h + \eps + t.
    \end{align*}
    We then pick $t = \lambda h$ with $\lambda \in (0, \infty)$ and $\eps$ as previously. In particular, it holds that for all 
    \begin{align*}
        n \geq \frac{4d(1 + \lambda/3)}{\lambda^2h}\log\Bigg(\frac{1}{(1 + \lambda) h}\bigg(\frac{K_0}{\delta}\bigg)^{1/(2d)}\Bigg),
    \end{align*}
    we have $\eps \leq (1 + \lambda)h.$ Hence, for such $n$,
    \begin{align*}
        \sup_{x \in \RR^d}\frac 1n \sum_{i=1}^n \mathds{1}(X_i \in S(x, h)) \leq 2(1 + \lambda)h,
    \end{align*}
    which proves that
    \begin{align*}
        \PP\Bigg\{\sup_{x \in \RR^d}\frac 1{nh} \sum_{i=1}^n \mathds{1}(X_i \in S(x, h)) < 2(1 + \lambda)\Bigg\} < \PP\big\{A_n(t, Z^{\ol m})^c\big\} \leq \delta.
    \end{align*}
    This finishes the proof.
\end{proof}

One can check that both the conclusions of Proposition \ref{prop.Bernstein.general} are true whenever \eqref{eq.Bernstein.nk.condition} is satisfied. From now on, for technical reasons, we assume that the constant $K$ in Proposition \ref{prop.covering.number} is greater than $1.$ This assumption only potentially worsens the covering number bound. We now consider an integer $k \in \{1, \dots, n\}.$

\begin{proposition}
    \label{prop.neighbours.distance.bound.zeta}
    Let $\tau > 0.$ If $k \geq \lceil \log n\rceil, \ n \geq 2d,$ and $h_+ = h_+(k) := 2k/((1 - \lambda^+_\tau)n)$ with
    \begin{align*}
        \lambda_\tau^+ := \frac{4\big(2d + \tau + \log(K_0)/\log(2d))\big) + 9}{4\big(2d + \tau + \log(K_0)/\log(2d))\big) + 12},
    \end{align*} 
    then,
    \begin{align*}
        \PP\Bigg\{\sup_{x \in \RR^d}\frac{R_k(x)}{\zeta_{h_+}(x)} \geq 1\Bigg\} \leq n^{-\tau}.
    \end{align*}
\end{proposition}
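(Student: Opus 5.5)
The claim follows by combining Proposition \ref{prop.Bernstein.general}$(i)$ with the elementary equivalence between ball counts and neighbour distances. For every $x$, the ball $\B(x, r)$ contains at least $k$ sample points if and only if $R_k(x) \leq r$. It therefore suffices to show that, with probability at least $1 - n^{-\tau}$, every ball $\B(x, \zeta_{h_+}(x))$ contains at least $k$ points. Then $R_k(x) \leq \zeta_{h_+}(x)$ for all $x$ simultaneously. The strict inequality needed to exclude the event $\{\sup_x R_k/\zeta_{h_+} \geq 1\}$ will come from a slightly enlarged constant, or from continuity of $\P$, which gives no atoms and hence ties of probability zero. If necessary, one can run the argument with $h_+$ replaced by something marginally smaller, since $\lambda_\tau^+$ is fixed and has slack.

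I would apply Proposition \ref{prop.Bernstein.general}$(i)$ with $h = h_+$, $\lambda = \lambda_\tau^+ \in (0,1)$ and $\delta = n^{-\tau}$. On the complement of the bad event,
\begin{align*}
\inf_{x}\sum_{i=1}^n \mathds{1}(X_i \in \B(x,\zeta_{h_+}(x))) \geq \frac{(1-\lambda_\tau^+) n h_+}{2} = k,
\end{align*}
by the very definition $h_+ = 2k/((1-\lambda^+_\tau)n)$. This gives $R_k(x) \leq \zeta_{h_+}(x)$ uniformly in $x$. One should also note that if $h_+ \geq 1$, then $\zeta_{h_+} = \infty$ or the claim is trivial, so I may assume $h_+ < 1$.

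The main work is verifying the sample-size condition \eqref{eq.Bernstein.nk.condition}. With $nh_+ = 2k/(1-\lambda)$, it becomes
\begin{align*}
\frac{2k}{1-\lambda} \geq \frac{4d(1+\lambda/3)}{\lambda^2}\Big[\frac{\tau\log n + \log K_0}{2d} + \log\frac{(1-\lambda)n}{k(1-\lambda)}\Big],
\end{align*}
where the last logarithm equals $\log(n/k)$ and is at most $\log n$. Using $k \geq \log n$ and $n \geq 2d$ (so that $\log K_0 \leq (\log K_0/\log(2d))\log n$), the right-hand side is bounded by
\begin{align*}
\frac{2(1+\lambda/3)}{\lambda^2}\bigl(2d + \tau + \log K_0/\log(2d)\bigr)\log n .
\end{align*}
Writing $A := 4(2d + \tau + \log K_0/\log(2d))$, it then suffices that $\frac{1}{1-\lambda} \geq \frac{(1+\lambda/3)A}{4\lambda^2}$.

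The anticipated obstacle is checking that the explicit choice $\lambda_\tau^+ = (A+9)/(A+12)$ satisfies this inequality. Here $1-\lambda = 3/(A+12)$, so the left side equals $(A+12)/3$, while $\lambda$ is close to $1$. The inequality reduces to a polynomial comparison in $A$, which I would verify directly, possibly after absorbing constants using $A \geq 8d$. If the constants do not close exactly, I would enlarge $A$ slightly; the result's structure does not depend on this. Finally, I would take the probability bound $\delta = n^{-\tau}$ from Proposition \ref{prop.Bernstein.general} to conclude.
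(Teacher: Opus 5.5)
Your argument is the paper's: you apply Proposition~\ref{prop.Bernstein.general}$(i)$ with $h=h_+$, $\lambda=\lambda_\tau^+$ and $\delta=n^{-\tau}$, and use $(1-\lambda)nh_+/2=k$. You then reduce the sample-size condition, via $\log(n/k)\le\log n$, $k\ge\log n$ and $n\ge 2d$, to $\lambda^2\ge a(1+\lambda/3)(1-\lambda)$ with $a=A/4$, which is exactly the paper's quadratic inequality. You leave the final check open and hedge about enlarging $A$. That hedge is not allowed, since the statement prescribes $\lambda_\tau^+$, but it is also unnecessary. With $1-\lambda=3/(A+12)$ and $1+\lambda/3=(4A+45)/(3(A+12))$, your inequality becomes $4(A+9)^2\ge A(4A+45)$, i.e.\ $27A+324\ge 0$. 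This direct substitution is slightly cleaner than the paper's check, which computes the positive root $z_+=\frac{a}{a+3}(\sqrt{4+9/a}-1)$ and bounds it by $\lambda_\tau^+$ with Bernoulli's inequality.

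The step that would fail is your proposed upgrade to the strict inequality. Absence of atoms only rules out ties at each fixed $x$. Shrinking $h_+$ does not help either, because the literal event has probability one. For a fixed sample and $0<h_+<1$, both $R_k(x)$ and $\zeta_{h_+}(x)$ equal $\|x\|+O(1)$ as $\|x\|\to\infty$ (the latter by tightness of $\P$). Hence $\sup_x R_k(x)/\zeta_{h_+}(x)\ge 1$ almost surely. For instance, take $\P$ uniform on $[0,1]$ and let $X_{(n-k+1)}$ be the $k$-th largest sample point; as $x\to\infty$ the ratio is $(x-X_{(n-k+1)})/(x-1+h_+)\to 1$. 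So no argument can bound $\PP\{\sup_x R_k/\zeta_{h_+}\ge 1\}$ by $n^{-\tau}$. What your argument actually proves is $\PP\{\sup_x R_k(x)/\zeta_{h_+}(x)>1\}\le n^{-\tau}$, i.e.\ $\PP\{U_n^{\P}(k)^c\}\le n^{-\tau}$. The paper's proof proves the same thing, making the same unexplained passage from $\le 1$ to $<1$, and this is the form used in all later results. You should state the conclusion that way and drop the strictness remark.
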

We further define 
\begin{align}
    \label{eq.def.btau}
    B_\tau := \frac{2}{1 - \lambda_\tau^+} = \frac{8(2d + 3 + \log(K_0)/\log(2d) + \tau)}{3},
\end{align}
so that $h_+ = B_\tau k/n.$

\begin{proof}
    Applying Proposition \ref{prop.Bernstein.general} $(i)$ with $\delta = n^{-\tau},$ and $h = 2k/((1 - \lambda)n)$ with arbitrary $\lambda \in (0, 1),$ yields that for all $n$ satisfying
    \begin{align}
        \label{eq.n.condition.nearest.neighbours.bound.zeta}
        n \geq \frac{2d(1 + \lambda/3)(1 - \lambda)n}{\lambda^2k}\log\Big(K_0^{1/(2d)}n^{1 + \tau/(2d)}k^{-1}\Big),
    \end{align}
    it holds that
    \begin{align}
        \label{eq.bound.zeta.neighbour.hp}
        \PP\Bigg\{\inf_{x \in \RR^d}\frac{1}{n}\sum_{i=1}^n \mathds{1}(X_i \in \B(x, \zeta_{h_+}(x))) <\frac kn\Bigg\} \leq n^{-\tau},
    \end{align}
    where $K_0$ is defined in Proposition \ref{prop.Bernstein.general}. The high-probability bound \eqref{eq.bound.zeta.neighbour.hp} implies that
    \begin{align*}
        \PP\Bigg\{\sup_{x \in \RR^d}\frac{R_k(x)}{\zeta_{h_+}(x)} \geq 1\Bigg\} \leq n^{-\tau}.
    \end{align*}
    We now check that the right-hand side of \eqref{eq.n.condition.nearest.neighbours.bound.zeta} is always smaller than $2d$. Inequality \eqref{eq.n.condition.nearest.neighbours.bound.zeta} is equivalent to
    \begin{align}
        \label{eq.1111}
        \frac{(1 + \lambda/3)(1-\lambda)}{\lambda^2k}\log\Big(K_0n^{2d + \tau}k^{-2d}\Big) \leq 1.
    \end{align}
    For $n \geq k \geq \log n,$ using the expression of $K_0,$ the assumption that $K \geq 1,$ and $d\geq 1,$ we can check that for $n^\tau \geq (384e^2)^{-1},$ it holds that $\log(K_0n^{2d + \tau}k^{-2d})\geq 0.$ The latter then holds for any $\tau > 0$ and any $n \geq 1.$ Hence, we can assert that for \eqref{eq.1111} to hold, it is sufficient to have
    \begin{align*}
        \frac{(1 + \lambda /3)(1 - \lambda)}{\lambda^2}\log\Big(K_0n^{2d + \tau}\Big) \leq \log n.
    \end{align*}
    Using $a\log(b) = \log(b^a),$ exponentiating on both sides and rearranging the terms leads to 
    \begin{align*}
        1 \leq n^{\tfrac{\lambda^2}{(1 + \lambda/3)(1 - \lambda)} - 2d - \tau - \log(K_0)/\log(n)}.
    \end{align*}
    For the latter to hold, it is sufficient that the exponent be nonnegative. Since $n \geq 2d,$ a lower bound for the exponent in the above display is $\lambda^2/((1 + \lambda/3)(1 - \lambda)) - 2d - \tau - \log(K_0)/\log(2d).$ Rearranging the expression of the derived lower bound gives the sufficient condition under the form of a quadratic inequation
    \begin{align*}
        \lambda^2\Big(1 + \frac{a}{3}\Big) + \frac{2a}{3}\lambda - a.
    \end{align*}
    where $a:= 2d + \tau + \log(K_0)/\log(2d).$ It is straightforward to find that the positive root $z_+$ satisfies
    \begin{align*}
        z_+ = \frac{a}{a + 3}\bigg(\sqrt{4 + \frac 9a} - 1\bigg) \leq \frac{4a + 9}{4a + 12} = \lambda_\tau^+ < 1,
    \end{align*}
    where the inequality follows Bernoulli's inequality. This shows that with our choice of $\lambda_\tau^+,$ \eqref{eq.n.condition.nearest.neighbours.bound.zeta} holds for all $n \geq 2d.$ Hence, for all $k \geq \log n,$
    \begin{align*}
        \PP\Bigg\{\sup_{x \in \RR^d}\frac{R_k(x)}{\zeta_{h_+}(x)} \geq 1\Bigg\} \leq n^{-\tau}.
    \end{align*}
\end{proof}

Let $\P \in \mP.$ Let $X \sim \P,$ and  $Y = f(X) + \eps,$ where $\eps$ is an $\RR$-valued random variable that is independent from $X.$ Assume as well that $X$ is sub-exponential with parameters $\alpha, \nu > 0.$ Let $(X_1, Y_1), \dots, (X_n, Y_n)$ be $n$ i.i.d.\ copies of $(X, Y).$ For $k \in \NN$ and $x \in \RR^d,$ we denote by $R_k^{\P}(x) := \|x - X_k(x)\|.$ We define the events
\begin{align*}
    U_n^{\P}(k) := \Bigg\{\sup_{x \in \RR^d} \frac{R_k^{\P}(x)}{\zeta^{\P_{\sX}}_{h_+(k)}(x)} \leq 1\Bigg\}, \text{ and }U_n^{\P} := \bigcap_{k = \lceil \log n\rceil}^n U^{\P}_n(k).
\end{align*}

\begin{remark}
    \label{rem.bound.variable.k}
    While Proposition \ref{prop.neighbours.distance.bound.zeta} shows that $R_k(x) \leq \zeta_{h_+}(x)$ uniformly with high probability for each individual choice of $k \geq \log n,$ it can easily be extended to the case where $k = k(x) \in \{1, \dots, n\}$ depends on $x \in \RR^d.$ The fact that $k$ can now change values depending on $x$ demands one to pay the price that the $\sup$ in the probability is not on the whole $\RR^d$ anymore, but only on a subset $\Theta_n^*(k)$ defined as follows
    \begin{align*}
        \Theta_n(k) := \bigg\{x \in \RR^d: k(x) \geq \lceil\log n \rceil\bigg\}.
    \end{align*}
    Similarly, we have to pay the price of a factor $n$ in the probability, as can be seen from the union bound
    \begin{align*}
        \PP\Bigg\{\sup_{x \in \Theta_n^*}\frac{R_{k(x)}(x)}{\zeta_{h_+}} \geq 1\Bigg\} &\leq \PP\Bigg\{\bigcup_{\ell = \lceil \log n \rceil}^n\Bigg\{\sup_{x \in \RR^d}\frac{R_{\ell}(x)}{\zeta_{h_+}(x)} \geq 1\Bigg\} \Bigg\}\\
        &\leq \sum_{\ell = \lceil \log n \rceil}^n\PP\Bigg\{\sup_{x \in \RR^d}\frac{R_{\ell}(x)}{\zeta_{h_+}(x)}\geq 1\Bigg\}\\
        &\leq n^{1 - \tau}.
    \end{align*}
\end{remark}
\begin{proposition}
    \label{prop.neighbours.distance.lower.bound.zeta}
    Let $\tau > 0$ and $V_\tau := K_0^{1/(2d)}\vee(8d + 4\tau).$ If $k \geq \lceil V_\tau\log n\rceil, \ n\geq 2d,$ and $h_- = h_-(k) := k/(2n(1 + \lambda_\tau^-)),$ with 
    \begin{align*}
        \lambda_\tau^- := 1 + \sqrt{\frac 52},
    \end{align*}
    then,
    \begin{align*}
        \PP\Bigg\{\inf_{x \in \RR^d}\frac{R_k(x)}{\zeta_{h_-}(x)} \leq 1\Bigg\} \leq n^{-\tau}.
    \end{align*}
\end{proposition}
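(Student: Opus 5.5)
The plan mirrors the proof of Proposition \ref{prop.neighbours.distance.bound.zeta}, this time using the upper-deviation statement, Proposition \ref{prop.Bernstein.general} $(ii)$. We want to show that, with high probability, every ball $\B(x, \zeta_{h_-}(x))$ contains strictly fewer than $k$ sample points, uniformly in $x$. Then the $k$-th nearest neighbour of $x$ lies outside that closed ball, so $R_k(x) > \zeta_{h_-}(x)$ for all $x$.

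\textbf{Step 1 (apply the upper-deviation bound).} Apply Proposition \ref{prop.Bernstein.general} $(ii)$ with $\delta = n^{-\tau}$, $h = h_- = k/(2n(1+\lambda))$ and $\lambda = \lambda_\tau^-$. On the resulting event,
\begin{align*}
\sup_{x \in \RR^d}\frac{1}{n}\sum_{i=1}^n \mathds{1}\big(X_i \in \B(x, \zeta_{h_-}(x))\big) \leq 2(1+\lambda_\tau^-)h_- = \frac kn .
\end{align*}
This gives at most $k$ points per ball. To get strictly fewer than $k$, I would either shrink $h_-$ slightly or use the strict inequality available in the $\eps$ choice of the proof of Proposition \ref{prop.Bernstein.general} (there $\eps < (1+\lambda)h$ strictly). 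Ties occur with probability zero since $\P$ has a density. Hence $\#\{i : X_i \in \B(x,\zeta_{h_-}(x))\} \le k-1$ for all $x$, and therefore $R_k(x) > \zeta_{h_-}(x)$.

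\textbf{Step 2 (verify the sample-size condition).} This is the main obstacle: we must check that condition \eqref{eq.Bernstein.nk.lower.condition} holds for every $n \geq 2d$ once $k \geq V_\tau \log n$. Substituting $h_-$, the condition becomes
\begin{align*}
\frac{8d(1+\lambda/3)(1+\lambda)}{\lambda^2 k}\log\Big(K_0^{1/(2d)} n^{\tau/(2d)}\frac{2n}{k}\Big) \leq 1 .
\end{align*}
\begin{itemize}
\item With $\lambda = 1+\sqrt{5/2}$, the prefactor $(1+\lambda/3)(1+\lambda)/\lambda^2$ is an explicit constant. The value of $\lambda_\tau^-$ is presumably chosen as the root making this ratio at most a clean value, such as $1$ or $1/2$; I would confirm this by solving the resulting quadratic, as was done for $\lambda_\tau^+$.
\item Bound the logarithm using $2n/k \le 2n$ and $K_0^{1/(2d)} \le V_\tau$. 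This gives a bound of the form $\log V_\tau + (1+\tau/(2d))\log n + \log 2$.
\item The requirement reduces to $k \gtrsim (8d + 4\tau)\log n + 8d\log V_\tau$. This is where $V_\tau = K_0^{1/(2d)} \vee (8d+4\tau)$ enters. The additive $\log V_\tau$ term must be absorbed into $V_\tau \log n$ using $n \ge 2d$, so that $\log n$ is bounded below.
\end{itemize}
I expect some constant juggling here, possibly requiring $\log n \ge \log 2$ and a crude bound such as $\log V \le V/e$.

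\textbf{Step 3 (conclude).} On the complement of the exceptional event, which has probability at most $n^{-\tau}$, we have $\inf_x R_k(x)/\zeta_{h_-}(x) > 1$. This yields the claim.
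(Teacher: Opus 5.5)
Your route is the paper's: Proposition~\ref{prop.Bernstein.general}~$(ii)$ with $\delta=n^{-\tau}$, $h=h_-$ and $\lambda=\lambda_\tau^-$, followed by a check of the sample-size condition. Your substitution is correct, and you even keep the factor $2$ that the paper drops. Your guess about $\lambda_\tau^-$ is also right: $1+\sqrt{5/2}$ is the positive root of $2\lambda^2-4\lambda-3=0$, which is exactly where $(1+\lambda/3)(1+\lambda)/\lambda^2=1$. The condition therefore reads
\begin{align*}
k+8d\log k \;\geq\; 4\log K_0+8d\log 2+(8d+4\tau)\log n .
\end{align*}

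The gap is in Step 2. Bounding $2n/k\le 2n$ discards the term $8d\log k$, and you are then left needing $k\ge (8d+4\tau)\log n+8d\log(2V_\tau)$. This does not follow from $k\ge V_\tau\log n$. Whenever $V_\tau=8d+4\tau$ (for instance for every large $\tau$, or for large $d$, since $K_0^{1/(2d)}\to 4e$), the hypothesis leaves no spare multiple of $\log n$. No lower bound on $n$ can then absorb the constant $8d\log(2V_\tau)$. Concretely, for $k=\lceil (8d+4\tau)\log n\rceil$ your reduced inequality is false, even though the true condition holds.

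The missing idea is that the two parts of $V_\tau$ control different terms:
\begin{itemize}
\item $k\ge(8d+4\tau)\log n$ handles the $\log n$ term.
\item $k\ge K_0^{1/(2d)}\log n$ pays for $\log K_0$, via $8d\log k\ge 4\log K_0+8d\log\log n\ge 4\log K_0+8d\log 2$ as soon as $\log n\ge 2$.
\end{itemize}
The requirement $\log n\ge 2$ is automatic, because $n\ge 2d$ and $n\ge k>8d\log n$ force $n\ge 27$. This is the content of the paper's ``easy to check'' step for $k\ge 4\log(K_0n^{2d+\tau}k^{-2d})$, and it also leaves strict slack.

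Two smaller points.

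In Step 1, shrinking $h_-$ does not resolve the ``at most $k$ versus at most $k-1$'' issue. It replaces $\zeta_{h_-}$ by the smaller radius $\zeta_{h'}$, so you would prove a weaker statement. The fix that works keeps $h=h_-$ and uses the strict inequality $\eps<(1+\lambda)h$ in the proof of Proposition~\ref{prop.Bernstein.general}~$(ii)$. That strictness is available precisely because of the slack above; the paper glosses over this by writing $\geq k/n$.

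In Step 3, $R_k(x)>\zeta_{h_-}(x)$ for every $x$ does not give $\inf_x R_k(x)/\zeta_{h_-}(x)>1$. Both $R_k(x)$ and $\zeta_{h_-}(x)$ equal $\|x\|+O(1)$ as $\|x\|\to\infty$, so this infimum is always at most $1$. What your argument (like the paper's) actually yields is $\PP\{\exists x:\ R_k(x)<\zeta_{h_-}(x)\}\le n^{-\tau}$. This is also what is used later through the event $L_n^{\P}(k)$, so you should state the conclusion in that form.
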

We further define
\begin{align}
    \label{eq.def.dtau}
    c_1 := \frac{1}{2 + \sqrt{10}},
\end{align}
so that $h_- = c_1 k/n.$

\begin{proof}
    Applying Proposition \ref{prop.Bernstein.general} $(ii)$ with $\delta = n^{-\tau}$ and $h = k/(2(1 + \lambda)n)$ with arbitrary $\lambda \in \RR_+^*,$ yields that for all $n$ satisfying
    \begin{align}
        \label{eq.n.condition.nearest.neighbours.bound.zeta.lower}
        n \geq \frac{8d(1 + \lambda/3)(1 + \lambda)n}{\lambda^2k}\log\Big(K_0^{1/(2d)}n^{1 + \tau/(2d)}k^{-1}\Big),
    \end{align}
    it holds that
    \begin{align}
        \label{eq.bound.zeta.neighbour.lower.hp}
        \PP\Bigg\{\sup_{x \in \RR^d}\frac{1}{n}\sum_{i=1}^n \mathds{1}(X_i \in \B(x, \zeta_{h_-}(x))) \geq \frac kn\Bigg\} \leq n^{-\tau},
    \end{align}
    where $K_0$ is defined in Proposition \ref{prop.Bernstein.general}. The high-probability bound \eqref{eq.bound.zeta.neighbour.lower.hp} implies that
    \begin{align*}
        \PP\Bigg\{\inf_{x \in \RR^d}\frac{R_k(x)}{\zeta_{h_-}(x)} \geq 1\Bigg\} \leq n^{-\tau}.
    \end{align*}
    Taking $\lambda_\tau^- = 1 + \sqrt{5}/2$ in \eqref{eq.n.condition.nearest.neighbours.bound.zeta.lower} leads to
    \begin{align*}
        k \geq 4\log\big(K_0n^{2d + \tau}k^{-2d}\big),
    \end{align*}
    and it is easy to check that this inequality is satisfied for all $n \geq 2d$ and $k \geq \lceil V_\tau \log n\rceil.$
\end{proof}
We define the events
\begin{align*}
    L_n^{\P}(k) := \Bigg\{\inf_{x \in \RR^d} \frac{R_k^{\P}(x)}{\zeta^{\P}_{h_+(k)}(x)}\geq 1\Bigg\},\text{ and }L_n^{\P} &:= \bigcap_{k= \lceil V_\tau\log n\rceil}^n L_n^{\P}(k).
\end{align*}
We also define the set
\begin{align*}
    \Delta_n^{\P}(c, k) := \bigg\{x \in \RR^d: p(x) \geq \frac{ck}{n}\bigg\}.
\end{align*}

\begin{lemma}
    \label{lem.bound.neighbours.density}
    Let $\tau > 0$ and $p$ be the density of $\P.$ If $k \geq \lceil\log n\rceil,$ then, for all $n \geq 2d,$
    \begin{align*}
         U_n^{\P}(k) \subseteq \Bigg\{\sup_{x \in \Delta_n^{\P}(S_{\tau}, k)}p(x)R_k(x)^d \leq \frac{S_\tau k}{n}\Bigg\},
    \end{align*}
    with $S_{\tau} := \theta B_\tau.$
\end{lemma}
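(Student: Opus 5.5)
On the event $U_n^{\P}(k)$ we have, for every $x\in\RR^d$, the bound $R_k(x) \le \zeta_{h_+}(x)$, where $h_+ = B_\tau k/n$. So it suffices to show deterministically that
\begin{align*}
p(x)\,\zeta_{h_+}(x)^d \le \frac{\theta B_\tau k}{n} \quad \text{for all } x \in \Delta_n^{\P}(S_\tau, k),
\end{align*}
since then $p(x)R_k(x)^d \le p(x)\zeta_{h_+}(x)^d \le S_\tau k/n$ on that set.

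\textbf{Step 1: reduce to an explicit radius.} Fix $x \in \Delta_n^{\P}(S_\tau,k)$, so that $p(x) \ge \theta B_\tau k/n > 0$ and in particular $x \in \supp(\P)$. Define
\begin{align*}
r := \Big(\frac{\theta B_\tau k}{n p(x)}\Big)^{1/d}.
\end{align*}
The condition $x \in \Delta_n^{\P}(S_\tau,k)$ is exactly $r \le 1$. This is the purpose of the threshold $S_\tau = \theta B_\tau$: it guarantees that the local mass assumption \eqref{eq.minimal.maximal.mass} can be applied at radius $r$.

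\textbf{Step 2: apply the lower local mass bound.} By \eqref{eq.minimal.maximal.mass},
\begin{align*}
\P\{\B(x,r)\} \ge \theta^{-1} p(x) r^d = \frac{B_\tau k}{n} = h_+.
\end{align*}
By the definition \eqref{eq.zeta.definition} of $\zeta_{h_+}$ as an infimum, this gives $\zeta_{h_+}(x) \le r$. Hence
\begin{align*}
p(x)\,\zeta_{h_+}(x)^d \le p(x)\, r^d = \frac{\theta B_\tau k}{n} = \frac{S_\tau k}{n}.
\end{align*}
Combining this with $R_k(x) \le \zeta_{h_+}(x)$ on $U_n^{\P}(k)$ and taking the supremum over the set yields the claimed inclusion.

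\textbf{Minor subtleties.} The first is that $U_n^{\P}(k)$ is defined with "$\le 1$", so we obtain $R_k(x) \le \zeta_{h_+}(x)$ directly with no strict-inequality issues. The second is ensuring $x \in \supp(\P)$. Both are harmless because $p(x) > 0$. The hypotheses $k \ge \lceil \log n\rceil$ and $n \ge 2d$ play no role in the deterministic inclusion; they only matter for the probability bound on $U_n^{\P}(k)$ from Proposition \ref{prop.neighbours.distance.bound.zeta}.
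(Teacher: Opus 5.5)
Your argument is correct and is essentially the paper's: on $U_n^{\P}(k)$ you reduce to the deterministic bound $p(x)\zeta_{h_+}(x)^d \le S_\tau k/n$ on $\Delta_n^{\P}(S_\tau,k)$, and you obtain it from the lower local-mass inequality, with $S_\tau=\theta B_\tau$ ensuring that the relevant radius is at most $1$. The only difference is that you apply the mass bound directly at $r=(\theta B_\tau k/(np(x)))^{1/d}$ and use the infimum definition of $\zeta_{h_+}$, whereas the paper applies it at radius $\zeta_{h_+}(x)\wedge 1$ (using $\P\{\B(x,\zeta_{h_+}(x))\}=h_+$) and then checks $\zeta_{h_+}(x)\le 1$ separately via the unit ball, so your version is slightly more streamlined.
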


\begin{proof}
    Since $\P \in \mP,$ we can apply \eqref{eq.minimal.maximal.mass} to obtain that for all $x \in \RR^d,$
    \begin{align*}
        h_+ = \P\{\B(x, \zeta_{h_+}(x))\} \geq \theta^{-1}\big(\zeta_{h_+}(x)\wedge 1\big)^dp(x).
    \end{align*}
    By rearranging the previously displayed equation, we obtain
    \begin{align*}
        \zeta_{h_+}(x)\wedge 1 \leq \bigg(\frac{\theta h_+}{p(x)}\bigg)^{1/d}.
    \end{align*}
    We observe that if $x \in \Delta_n^{\P}(S_\tau, k),$ then $\P\{\B(x, 1)\} \geq \theta^{-1}p(x) \geq B_\tau k/n = h_+.$ Hence, $\zeta_{h_+}(x) \leq 1$ and for all $x \in \Delta_n^{\P}(S_\tau, k),$ on $U_n^{\P}(k),$
    \begin{align*}
        R_k(x)^d \leq \frac{S_\tau k}{np(x)},
    \end{align*}
    which implies the result.
\end{proof}

\begin{remark}
    \label{rem.bound.variable.k.density}
    Given a function $k \colon \RR^d \to \{1, \dots, n\}$ and a constant $c>0,$ we define the set
    \begin{align*}
        \Theta_n^{\P}(c, k) := \Bigg\{x \in \RR^d: p(x)\geq \frac{ck(x)}{n}\Bigg\}.
    \end{align*}
    As a direct implication of Lemma \ref{lem.bound.neighbours.density}, we obtain,
    \begin{align*}
        \Bigg\{\sup_{x \in \Theta_n^{\P}(c, k)}\frac{p(x)R_{k(x)}(x)^d}{k(x)} \geq \frac{S_\tau}{n}\Bigg\} &= \bigcup_{i=\lceil\log n\rceil}^n\Bigg\{\sup_{x \in k^{-1}(i)\cap \Delta_n^{\P}(c, i)}p(x)R_i(x)^d \geq \frac{S_\tau i}{n}\Bigg\}\\
        &\subseteq \bigcup_{i=\lceil \log n \rceil}^n\Bigg\{\sup_{x \in \Delta_n^{\P}(c, i)} p(x)R_i(x)^d \geq \frac{S_\tau i}{n}\Bigg\}\\
        &= \bigg(\bigcap_{i=\lceil \log n\rceil}^n U_n^{\P}(i)\bigg)^c = U_n^{\P c}.
    \end{align*}
    Hence,
    \begin{align*}
        \PP\Bigg\{\sup_{x \in \Theta_n^{\P}(c, k)}\frac{p(x)R_{k(x)}(x)^d}{k(x)} \leq \frac{S_\tau}{n}\Bigg\} \geq 1 - n^{1-\tau}.
    \end{align*}
\end{remark}

\begin{lemma}
    \label{lem.lower.bound.neighbours.density}
    Let $\tau > 0.$ If $k \geq \lceil V_\tau\log n\rceil,$ and $n \geq 2d,$ then
    \begin{align*}
        \Bigg\{\inf_{x \in \Delta_n^{\P}(c_2, k)}p(x)R_k(x)^d \leq \frac{c_1 k}{\theta n}\Bigg\} \subseteq L_n^{\P}(k)^c,
    \end{align*}
    where $c_2 := \theta c_1.$
\end{lemma}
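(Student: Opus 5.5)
The plan is to prove the inclusion by contraposition: we show that on $L_n^{\P}(k)$, every $x \in \Delta_n^{\P}(c_2, k)$ satisfies $p(x)R_k(x)^d \geq c_1k/(\theta n)$. This is the mirror image of the proof of Lemma \ref{lem.bound.neighbours.density}, with the upper local-mass bound in \eqref{eq.minimal.maximal.mass} used in place of the lower one.

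I read $L_n^{\P}(k)$ as the complement of the event controlled by Proposition \ref{prop.neighbours.distance.lower.bound.zeta}, i.e.\ with $\zeta_{h_-}$ and $h_- = c_1k/n$; the $h_+$ in its displayed definition appears to be a typo. On this event, $R_k(x) \geq \zeta_{h_-}(x)$ for all $x \in \RR^d$. To keep the final inequality strict, I would take $L_n^{\P}(k)$ with a strict inequality, $\inf_x R_k(x)/\zeta_{h_-}(x) > 1$. This is exactly the complement of the event whose probability Proposition \ref{prop.neighbours.distance.lower.bound.zeta} bounds.

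\textbf{Step 1 (radius at most one).} Fix $x \in \Delta_n^{\P}(c_2,k)$, so that $p(x) \geq \theta c_1 k/n$. By the lower bound in \eqref{eq.minimal.maximal.mass} with $r = 1$,
\[
\P\{\B(x,1)\} \geq \theta^{-1}p(x) \geq c_1k/n = h_-.
\]
Hence $\zeta_{h_-}(x) \leq 1$.

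\textbf{Step 2 (lower bound on $\zeta_{h_-}$).} Since $\P$ has a Lebesgue density, $r \mapsto \P\{\B(x,r)\}$ is continuous. Therefore $\P\{\B(x,\zeta_{h_-}(x))\} = h_-$. Applying the upper bound in \eqref{eq.minimal.maximal.mass} with $r = \zeta_{h_-}(x) \in (0,1]$ gives
\[
h_- \leq \theta p(x)\zeta_{h_-}(x)^d, \qquad\text{that is,}\qquad p(x)\zeta_{h_-}(x)^d \geq \frac{c_1k}{\theta n}.
\]

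\textbf{Step 3 (conclusion).} On $L_n^{\P}(k)$ we have $R_k(x) > \zeta_{h_-}(x)$, so $p(x)R_k(x)^d > c_1k/(\theta n)$ for every $x \in \Delta_n^{\P}(c_2,k)$. Taking the infimum gives a bound $\geq c_1k/(\theta n)$, which is not yet strict.

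The main obstacle is exactly this boundary case: equality of the infimum at the threshold. I would handle it as follows. Since $p \leq D$ and $\zeta_{h_-}$ is bounded on $\Delta_n^{\P}(c_2,k)$, the strict pointwise inequality can be upgraded to a strict inequality for the infimum, using the strict version of the event (or absorbing the boundary into an event of probability zero). This yields the stated inclusion. The hypotheses $k \geq \lceil V_\tau \log n\rceil$ and $n \geq 2d$ are only needed so that Proposition \ref{prop.neighbours.distance.lower.bound.zeta} later gives $\PP\{L_n^{\P}(k)^c\} \leq n^{-\tau}$; the inclusion itself is deterministic.
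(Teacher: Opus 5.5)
Your proposal is correct and matches the paper's proof. On $L_n^{\P}(k)$ (read with $h_-$), you get $\zeta_{h_-}(x)\le 1$ on $\Delta_n^{\P}(c_2,k)$ from the lower local-mass bound at $r=1$, which the paper does by contradiction rather than directly. You then apply the upper local-mass bound at $r=\zeta_{h_-}(x)$ to obtain $p(x)\zeta_{h_-}(x)^d\ge c_1k/(\theta n)$.

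The paper stops at the non-strict bound and ignores the boundary case you flag. Your fix via the strict event works, but for a simpler reason: $\iota:=\inf_x R_k(x)/\zeta_{h_-}(x)>1$ gives $\inf_{x\in\Delta_n^{\P}(c_2,k)}p(x)R_k(x)^d\ge \iota^d c_1k/(\theta n)$. So your appeal to $p\le D$ and to the boundedness of $\zeta_{h_-}$ is unnecessary.
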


\begin{proof}
    We work on the event $L_n^{\P}(k).$ For all $x \in \supp(\P)$ such that $\zeta_{h_-}(x) \leq 1,$ the local mass property \eqref{eq.minimal.maximal.mass} implies that
    \begin{align*}
        h_- = \P\{\B(x, \zeta_{h_-}(x))\} \leq \theta p(x)\zeta_{h_-}(x)^d.
    \end{align*}
    The latter, once rearranged, yields
    \begin{align*}
        \zeta_{h_-}(x) \geq \bigg(\frac{c_1 k}{\theta np(x)}\bigg)^{1/d}.
    \end{align*}
    It remains to prove that if $x \in \Delta_n^{\P}(c_2, k),$ then $\zeta_{h_-}(x) \leq 1.$ Let $x \in \Delta_n^{\P}(c_2, k)$ and assume that $\zeta_{h_-}(x) > 1.$ The local mass property \eqref{eq.minimal.maximal.mass} implies that
    \begin{align*}
        h_- = \P\{\B(x, \zeta_{h_-}(x))\} \overset{(i)}{>} \P\{\B(x, 1)\} \geq \theta^{-1}p(x) \geq \frac{c_1 k}{n} = h_-,
    \end{align*}
    which is a contradiction. Hence, for all $x \in \Delta_n^{\P}(c_2, k),$ it holds that $\zeta_{h_-}(x) \leq 1,$ and a fortiori, that
    \begin{align*}
        L_n^{\P}(k) \subseteq \Bigg\{\inf_{x \in \Delta_n^{\P}(c_2, k)} p(x) R_k(x)^d \geq \frac{c_1 k}{\theta n}\Bigg\}.
    \end{align*}
\end{proof}

\subsection{Uniform pointwise absolute bias and standard deviation bounds}
We now work within Model \eqref{eq.model}, where $f_* \in \mH(L, \beta)$ with $L >0$ and $\beta \in (0, 1].$ Let $\wh f$ be a local $k$-NN regression estimator with neighbour function $k \colon \RR^d \to \{1, \dots, n\}$ independent from $Y_i|X_i$ for $1 \leq i \leq n.$ That is, for all $x \in \RR^d,$
\begin{align}
    \label{eq.local.knn}
    \wh f(x) = \frac{1}{k(x)}\sum_{i=1}^n Y_i(x),
\end{align}
where $Y_i(x), \ 1\leq i \leq k(x)$ is the label associated to the $k$-th nearest neighbour of $x$ among $\{X_1, \dots, X_n\}.$ We define the associated pointwise average function 
\begin{align}
    \label{eq.average.knn}
    \ol f\colon x \mapsto \E\big[\wh f(x)|X_1, \dots, X_n\big].
\end{align}
The previous sub-section leads to the following bound on the pointwise bias.
\begin{lemma}
    \label{lem.bias.term}
    If $n \geq \|k(x)\|_{\infty},$ then, for all $x \in \RR^d,$
        \begin{align*}
            |\ol f(x) - f_*(x)| \leq 2L \wedge L \sum_{i=1}^{k(x)} k(x)^{-1} R_i(x)^\beta,
        \end{align*}
        Additionally, if $\P \in \mP,$ and $k(x) \geq \lceil \log n\rceil$ for all $x \in \RR^d,$ then
        \begin{align*}
            \bigcap_{i\in k(\RR^d)}U_n^{\P}(i) \subseteq \Bigg\{\sup_{x \in \Theta_n^{\P}(S_\tau, k)}\frac{p(x)^{\beta/d}|\ol f(x) - f_*(x)|}{k(x)^{\beta/d}} \leq \bigg(\frac{S_\tau}{ n}\bigg)^{\beta/d}\Bigg\}.
        \end{align*}
\end{lemma}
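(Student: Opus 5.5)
The first claim follows directly from the definition of $\ol f$ and the Hölder property. Conditionally on $X_1,\dots,X_n$, and since $k$ does not depend on the labels given the covariates, the noise terms have zero conditional mean. Hence
\begin{align*}
    \ol f(x) = \frac{1}{k(x)}\sum_{i=1}^{k(x)} f_*(X_{(i)}(x)),
\end{align*}
where $X_{(i)}(x)$ denotes the $i$-th nearest neighbour of $x$. Writing $\ol f(x) - f_*(x) = k(x)^{-1}\sum_{i=1}^{k(x)}(f_*(X_{(i)}(x)) - f_*(x))$, we get two bounds. The sup-norm part of the $\mH(L,\beta)$ constraint bounds each summand by $2\|f_*\|_\infty \le 2L$. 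The Hölder part bounds each summand by $L\|X_{(i)}(x)-x\|^\beta = L R_i(x)^\beta$. Taking the smaller of the two gives the first display. The assumption $n \ge \|k\|_\infty$ is only needed so that the $k(x)$-th neighbour exists.

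For the second claim, I work on $\bigcap_{i\in k(\RR^d)}U_n^{\P}(i)$ and fix $x\in\Theta_n^{\P}(S_\tau,k)$, so that $p(x)\ge S_\tau k(x)/n$. The key observation is monotonicity: $R_i(x)\le R_{k(x)}(x)$ for every $i\le k(x)$. The average of the $R_i(x)^\beta$ is therefore at most $R_{k(x)}(x)^\beta$, and
\begin{align*}
    |\ol f(x)-f_*(x)|\le L R_{k(x)}(x)^\beta .
\end{align*}

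It then remains to bound $R_{k(x)}(x)$, for which I invoke Lemma \ref{lem.bound.neighbours.density} with $k = k(x) \ge \lceil \log n\rceil$. Since $x\in\Delta_n^{\P}(S_\tau,k(x))$ and we are on $U_n^{\P}(k(x))$, that lemma yields $p(x)R_{k(x)}(x)^d\le S_\tau k(x)/n$. This is exactly the argument of Remark \ref{rem.bound.variable.k.density}, restricted to the values actually taken by $k$. Raising to the power $\beta/d$ gives
\begin{align*}
    \frac{p(x)^{\beta/d}}{k(x)^{\beta/d}}\,R_{k(x)}(x)^\beta \le \Big(\frac{S_\tau}{n}\Big)^{\beta/d}.
\end{align*}
Combining this with the previous display gives the claimed inclusion, up to the factor $L$. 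I expect $L$ should multiply the right-hand side in the stated event (or be absorbed into $S_\tau$); I would track that constant carefully.

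The only delicate step is the uniformity in $x$ when $k$ varies with $x$. This is handled because the event intersects $U_n^{\P}(i)$ over all values $i$ in the range of $k$, each of which holds uniformly in $x$. A secondary check is that $\Theta_n^{\P}(S_\tau,k)\subseteq\supp(\P)$, so the local mass inequality \eqref{eq.minimal.maximal.mass} used inside Lemma \ref{lem.bound.neighbours.density} applies. This holds because $p(x)>0$ there.
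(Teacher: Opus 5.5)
Your proposal is correct and matches the paper's proof. The paper bounds each neighbour's term by $2L \wedge L R_i(x)^\beta$ and averages; for the second claim it uses $|\ol f(x) - f_*(x)| \le L R_{k(x)}(x)^\beta$ together with Lemma \ref{lem.bound.neighbours.density} on $U_n^{\P}(k(x))$. You are also right that the stated event is missing a factor $L$: the paper's own argument gives $L(S_\tau/n)^{\beta/d}$, and this is why the factor reappears as $L^2$ in Proposition \ref{prop.decomp.knn}.
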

\begin{proof}
    Since $f_* \in \mH(L, \beta),$ we get for all $x \in \RR^d,$ and all $i \in \{1, \dots, k(x)\},$
    \begin{align*}
        \big|f(X_i(x)) - f_*(x)\big| &\leq \big(f(X_i(x)) + f_*(x)\big) \wedge L \|X_i(x) - x\|^{\beta}
        \nonumber\\
        &\leq 2L \wedge L R_i(x)^\beta.
    \end{align*}
    This together with the definition of $\ol f(x)$ and $\sum_{i=1}^{k(x)} k(x)^{-1} = 1$ yields 
    \begin{align*}
        \big|\ol f(x) - f_*(x) \big|
        &= \bigg|\sum_{i=1}^{k(x)} k(x)^{-1} f(X_i(x)) - f_*(x) \bigg|\\
        &\leq \sum_{i=1}^{k(x)} k(x)^{-1} \big|f_*(X_i(x)) - f_*(x) \big|\\
        &\leq \sum_{i=1}^{k(x)} k(x)^{-1}\big(2F \wedge L R_i(x)^\beta\big)\\
        &\leq 2L \wedge L \sum_{i=1}^{k(x)} k(x)^{-1} R_i(x)^\beta.
    \end{align*}
    This proves the first statement. The second statement is obtained by further upper bounding the previously derived inequality by $|\ol f(x) - f_*(x)| \leq LR_k(x)^\beta$ and applying Lemma \ref{lem.bound.neighbours.density}.
\end{proof}

\begin{proposition}\label{prop.variance.bound.general.case}
Let $\wh f$ be as in \eqref{eq.local.knn}, with neighbour function $k \colon \RR^d \to \{1, \dots, n\}.$ Let $\ol f$ be defined by \eqref{eq.average.knn}.
Then, for all $n\geq 2d,$ it holds that
\begin{align*}
    \PP\Bigg\{\sup_{x \in \RR^d}k(x)^{1/2}\big|\wh f(x) - \ol f(x)\big| \geq \eta\Bigg\} &\leq K_dn^{2d + 1}e^{-H \eta^2},
\end{align*}
where $K_d = 2(25/d)^d,$ and $H = \lambda_0^2/(8\sigma_0)$ for arbitrary $\lambda_0, \sigma_0 > 0$ such that $\E[\exp(\lambda|\eps|)] \leq \sigma_0.$
\end{proposition}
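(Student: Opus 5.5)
Conditionally on the design $X_1,\dots,X_n$, the quantity $\wh f(x) - \ol f(x)$ is an average of noise variables:
\begin{align*}
k(x)^{1/2}\big(\wh f(x) - \ol f(x)\big) = k(x)^{-1/2}\sum_{i=1}^{k(x)} \eps_{(i)}(x),
\end{align*}
where $\eps_{(i)}(x)$ is the noise attached to the $i$-th nearest neighbour of $x$. Since $\eps$ is independent of $X$, and $k$ is independent of $Y_i \mid X_i$, these are, conditionally on the design, i.i.d.\ copies of $\eps$. The plan is to reduce the supremum over $x \in \RR^d$ to a maximum over finitely many index sets, bound each term by a Chernoff argument, and then take a union bound. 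The factor $n^{2d+1}$ in the statement should come from counting these sets.

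\textbf{Step 1: combinatorial reduction.} For fixed $X_1,\dots,X_n$, the ordered neighbour list of $x$ (with a fixed tie-breaking rule) is constant on the cells of the arrangement of the $\binom n2$ bisector hyperplanes $\{x : \|x-X_i\| = \|x-X_j\|\}$. Equivalently, the set of the first $j$ neighbours of $x$ is always of the form $\{X_i\} \cap \B(x,r)$. By Proposition \ref{prop.covering.number}, balls have VC dimension $2d+1$, so by Sauer's lemma the number of distinct subsets $\{i : X_i \in B\}$, $B \in \mB$, is at most $(en/(2d+1))^{2d+1} \lesssim n^{2d+1}$. This is where the constant $K_d$ would come from, using $n \ge 2d$.

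Consequently, $\sup_x k(x)^{1/2}|\wh f(x) - \ol f(x)|$ is bounded by $\max_{S} |S|^{-1/2}\big|\sum_{i\in S}\eps_i\big|$, where $S$ ranges over a data-dependent family $\mathcal S$ of index sets with $\card(\mathcal S) \le K_d n^{2d+1}$. Only the sets $S$ matter, not their sizes separately, since $k(x)=|S|$ is determined by $S$.

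\textbf{Step 2: tail bound for a fixed set.} Fix $S$ with $|S| = j$. Using $\E[\exp(\lambda_0|\eps|)] \le \sigma_0$, centredness of $\eps$, and $e^{u} \le 1 + u + \tfrac{u^2}{2}e^{|u|}$, I expect a sub-Gaussian bound on a restricted range: for $|\lambda| \le \lambda_0/2$,
\begin{align*}
\E[e^{\lambda\eps}] \le \exp\big(c\,\sigma_0\lambda^2/\lambda_0^2\big),
\end{align*}
where the bound $u^2 e^{|u|/2} \lesssim e^{|u|}$ controls the second-order term. Then Chernoff gives
\begin{align*}
\PP\Big\{j^{-1/2}\Big|\sum_{i\in S}\eps_i\Big| \ge \eta\Big\} \le 2\exp(-H\eta^2)
\end{align*}
with $H = \lambda_0^2/(8\sigma_0)$, valid in the regime where the optimal $\lambda = \eta\lambda_0^2/(\text{const}\,\sigma_0\sqrt j)$ stays admissible.

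\textbf{Step 3: union bound.} Conditionally on the design, the family $\mathcal S$ is fixed and the noise is independent of it. A union bound over $\mathcal S$ then yields $K_d n^{2d+1}e^{-H\eta^2}$, and integrating over the design finishes the proof.

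\textbf{Main obstacle.} The difficulty is Step 2. The noise is only sub-exponential, so the Gaussian-type tail $e^{-H\eta^2}$ holds only for $\eta \lesssim \sqrt{j}$. For larger $\eta$ one gets merely $e^{-c\eta\sqrt j}$. I would first try to check whether the admissible range can be enforced by the constants in $H$. Failing that, I would accept the bound only for $\eta \le \sqrt{k(x)}$. This restriction is harmless in later uses, since there $\eta \asymp \sqrt{\log n}$ and $k(x) \ge \lceil\log n\rceil$. The remaining bookkeeping is to make the Sauer count match $K_d$.
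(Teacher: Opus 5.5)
Your plan is the paper's proof. Conditionally on the design, the noise at the neighbours is i.i.d.\ (Biau--Devroye, Prop.~8.1). A fixed neighbour set is handled by a Chernoff bound, which is their Lemma~12.1 and is exactly your Step~2 with the same $H=\lambda_0^2/(8\sigma_0)$. The supremum is then reduced to a finite design-dependent family, and a union bound is integrated over the design.

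The one real difference is the count. The paper bounds the number of neighbour orderings realised as $x$ varies (essentially the cells of your bisector arrangement) by $(25/d)^d n^{2d}$, via their Theorem~12.2. It then pays a further factor $n$ for a union over $\ell=k(x)$. Your Sauer count of ball traces absorbs $k$, since $k(x)=|S|$, and $(en/(2d+1))^{2d+1}\le (25/d)^d n^{2d+1}$ recovers $K_d$. With the sharp VC dimension $d+1$ of balls, your route would even give $O(n^{d+1})$. One minor caveat: the first $j$ neighbours form a ball trace only when there is no tie at the $j$-th distance. For a design with a density the tied points are a.s.\ in general position, so any tie-broken neighbour set is also the neighbour set of a nearby tie-free point.

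The obstacle you flag is genuine, and it cannot be absorbed into $H$. Take $k\equiv 1$; then $\sup_x|\wh f(x)-\ol f(x)|=\max_i|\eps_i|$. For standard Laplace noise, which is sub-exponential, this exceeds $\eta$ with probability at least $e^{-\eta}$. At fixed $n$ that beats $K_d n^{2d+1}e^{-H\eta^2}$ once $\eta$ is large, so the proposition as literally stated is false. The paper's proof has exactly the hole you anticipate. It quotes Lemma~12.1 under the condition $\eta\le\min(1,2\sigma_0)/\lambda_0$, applies it in the union bound at deviation level $\eta/\sqrt{\ell}$ for every $\ell\ge 1$, and then silently drops the condition. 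What can actually be proved, by either argument, is your restricted version $\eta\le c\sqrt{\min_x k(x)}$.

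Your claim that this restriction is harmless downstream needs the constants tracked, though. The corollary takes $\eta^2=C_\tau\log n$ with $C_\tau=(\log K_d+2d+\tau+1)/H$. Lemma~12.1's range, $\eta\le\sqrt{k(x)}\min(1,2\sigma_0)/\lambda_0$, then requires $k(x)\ge 8\sigma_0(\log K_d+2d+\tau+1)\log n$ (using $\sigma_0\ge1$). The clip $k(x)\ge\lceil\log n\rceil$ does not give this. There are two fixes:
\begin{itemize}
\item Impose a $\tau$-dependent lower clip on $k$.
\item Use your own Step~2. Its admissible range is $\eta\lesssim\sigma_0\sqrt{j}/\lambda_0$, whose square grows like $\sigma_0^2$ while $C_\tau$ grows only like $\sigma_0$. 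Enlarging $\sigma_0$, which the hypothesis allows, therefore covers $\eta=\sqrt{C_\tau\log n}$ for all $k\ge\lceil\log n\rceil$.
\end{itemize}
The second fix is precisely your idea of enforcing the range through the constants. It works for this downstream use, but not for the statement as written.
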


\begin{proof}
For any $x \in \RR^d$, we have
\begin{align*}
\wh f(x)-\ol f(x)=\sum_{i=1}^nk(x)^{-1}[Y_i(x)-f(X_i(x))]\mathds{1}(i \leq k(x)).
\end{align*}
Let $Z_i(x) := Y_i(x) - f(X_i(x))$ for all $1 \leq i \leq n$ and all $x \in \RR^d.$ By Proposition 8.1 in \cite{Biau2015Lectures} and the regression model \eqref{eq.model}, conditionally on $X_1,\ldots,X_n$, the random variables $\{Z_i(x)\}_{i=1}^n$ are mutually independent with zero mean. By assumption, they are sub-exponential, and therefore, they satisfy the uniform noise condition; there exists constants $\lambda_0, \sigma_0 > 0$ such that
\begin{align}\label{equ::uniformnoise}
\sup_{x^n\in \RR^{d\times n}} 
\E \Big[ \exp\big(\lambda_0|Z_i(x)|\big) \big| X^n = x^n \Big]
\leq \sigma_0 
< \infty,
\end{align}
where $x^n \in \RR^{d\times n}.$ 
By Lemma 12.1 in \cite{Biau2015Lectures}, 
for any $x \in \RR^d,$ and any $\eta \leq \min(1, 2\sigma_0) / \lambda_0$, it holds that
\begin{align}
    \label{eq.Chernoff.bound.knn}
    \PP \Big\{\big|\wh f(x) - \ol f(x)\big| \geq \eta \Big| X^n = x^n \Big\}
    \leq 2 \exp \bigg( - \frac{\eta^2 \lambda_0^2k(x)}{8 \sigma_0} \bigg).
\end{align}
To derive a high-probability bound that is uniform over $x \in \RR^d,$ we introduce the following set of permutations
\begin{align*}
    \mW(x^n) := \bigg\{\sigma \in \mS_n: \exists x \in \RR^d,\ X_i(x) = X_{\sigma(i)},\ \forall i \in \{1, \dots, n\}\bigg\},
\end{align*}
where $\mS_n$ is the symmetric group over $\{1, \dots, n\}.$ Theorem 12.2 in \cite{Biau2015Lectures} states that for all $n \geq 2d,$ and all $x^n \in \RR^{d\times n},$ the cardinality of $\mW(x^n)$ is bounded by
\begin{align*}
    \ell(x^n) := \card(\mW(x^n)) \leq \Big(\frac{25}{d}\Big)^dn^{2d}.
\end{align*}
In particular, given $x^n \in \RR^{d\times n},$ we can define the equivalence relationship $\sim_{x^n}$ on $\RR^d$ as, for all $z_1, z_2 \in \RR^d,$
\begin{align*}
    \big(z_1 \sim_{x^n} z_2\big) \Longleftrightarrow \Big(\forall i \in \{1, \dots, n\}: x_i(z_1) = x_i(z_2)\Big),
\end{align*}
where $x_i(z)$ is the $i$-th nearest neighbour of $z$ among $\{x_1, \dots, x_n\}.$ Further, $\RR^d/\sim_{x^n}$ has cardinality at most $\card(\mW(x^n)),$ and is in bijection with $\mW(x^n).$ Let then $\{A_\sigma\}_{\sigma \in \mW(x^n)}$ be the elements of $\RR^d/\sim_{x^n}.$ We can now derive a high probability uniform upper bound by partitioning $\RR^d$ according to $\sim_{x^n},$ using the union bound, and applying \eqref{eq.Chernoff.bound.knn} as follows 
\begin{align*}
    \PP\Bigg\{\sup_{x \in \RR^d}&k(x)^{1/2}\big|\wh f(x) - \ol f(x)\big| \geq \eta\Big| X^n = x^n\Bigg\} \\
    &\leq \PP\Bigg\{\bigcup_{\sigma \in \mW(X^n)}\Bigg\{\sup_{x \in A_\sigma}\Big|\sum_{i=1}^n \frac{k(x)^{1/2}}{k(x)}\big(Y_{\sigma(i)} - f_*(X_{\sigma(i)})\big)\Big| \geq \eta \bigg| X^n = x^n\Bigg\}\Bigg\}\\
    &\leq \sum_{\sigma \in \mW(x^n)}\PP\Bigg\{\sup_{\ell \in \{1, \dots, n\}}\Big|\sum_{i=1}^\ell \ell^{-1/2}\big(Y_{\sigma(i)} - f_*(X_{\sigma(i)})\big)\Big| \geq \eta \bigg| X^n = x^n\Bigg\}\\
    &\leq \sum_{\sigma \in \mW(x^n)}\sum_{\ell = 1}^n\PP\Bigg\{\Big|\sum_{i=1}^\ell \ell^{-1/2}\big(Y_{\sigma(i)} - f_*(X_{\sigma(i)})\big)\Big| \geq \eta \bigg| X^n = x^n\Bigg\}\\
    &\leq \sum_{\sigma \in \mW(x^n)}\sum_{\ell = 1}^n 2\exp\bigg(-\frac{\eta^2\lambda_0^2}{8c_0}\bigg)\\
    &\leq 2\Big(\frac{25}d\Big)^dn^{2d+1} \exp\bigg(-\frac{\eta^2\lambda_0^2}{8c_0}\bigg).
\end{align*}
We finish the proof by taking the expectation with regards to $X_1, \dots, X_n$ on both sides.
\end{proof}

\begin{corollary}
    \label{cor.variance.bound.nice.version}
    Under the conditions of Proposition \ref{prop.variance.bound.general.case}, we have for all $\tau > 0,$ all $n \geq 3 \vee 2d,$
    \begin{align*}
        \PP\Bigg\{\sup_{x \in \RR^d}k(x)^{1/2}\big|\wh f(x) - \ol f(x)\big|\geq \sqrt{C_\tau \log n}\Bigg\} \leq n^{-\tau},
    \end{align*}
    with 
    \begin{align*}
        C_\tau := \frac{\log K_d}{H}\bigg(1 + \frac{2d + \tau + 1 }{\log(K_d))}\bigg)
    \end{align*} where $H$ and $K_d$ are defined in Proposition \ref{prop.variance.bound.general.case}.
\end{corollary}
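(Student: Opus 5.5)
The plan is to deduce Corollary~\ref{cor.variance.bound.nice.version} directly from Proposition~\ref{prop.variance.bound.general.case}. Take $\eta = \sqrt{C_\tau \log n}$; this is admissible because the proposition gives a tail bound valid for every $\eta > 0$. It then suffices to check that
\begin{align*}
    K_d n^{2d+1} e^{-H C_\tau \log n} \leq n^{-\tau}.
\end{align*}
Equivalently, taking logarithms, we need
\begin{align*}
    H C_\tau \log n \geq \log K_d + (2d + 1 + \tau)\log n.
\end{align*}

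Substituting the definition of $C_\tau$ gives $H C_\tau = \log K_d + 2d + \tau + 1$. Hence $H C_\tau \log n = \log K_d \log n + (2d+\tau+1)\log n$. The desired inequality therefore reduces to $\log K_d \log n \geq \log K_d$, that is, to $\log n \geq 1$ (using $\log K_d > 0$). This holds for $n \geq 3 > e$, which is exactly why the statement assumes $n \geq 3 \vee 2d$. The condition $n \geq 2d$ is inherited from Proposition~\ref{prop.variance.bound.general.case}, which itself relies on the cell-count bound of Theorem 12.2 in \cite{Biau2015Lectures}.

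Two points need verifying. The first is $\log K_d > 0$. Since $K_d = 2(25/d)^d$, this holds when $d < 25$. For larger $d$ the quantity $\log K_d$ could be nonpositive, and the definition of $C_\tau$ would then misbehave. In that case I would replace $K_d$ by $K_d \vee e$, which only weakens the proposition's bound, and run the same argument. The second is the range restriction $\eta \leq \min(1, 2\sigma_0)/\lambda_0$ appearing in the Chernoff step \eqref{eq.Chernoff.bound.knn}. I would handle it as the proposition's statement does, treating that statement as valid for all $\eta$, or absorbing the restriction into the choice of $H$. I expect this to be the only delicate point. Everything else is the algebraic rearrangement above.
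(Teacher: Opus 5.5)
Your argument is correct and is essentially the paper's own proof. The paper sets $\delta = n^{-\tau}$ in the rearranged form of Proposition~\ref{prop.variance.bound.general.case}, which gives the threshold $\eta_- = \sqrt{H^{-1}(\log K_d + (2d+1+\tau)\log n)}$, and then uses $\log n \geq 1$ to check that $\sqrt{C_\tau\log n} \geq \eta_-$; this is exactly your comparison $HC_\tau \log n \geq \log K_d + (2d+1+\tau)\log n$, and like you it treats the proposition as a black box valid for all $\eta>0$ and tacitly needs $\log K_d>0$ (true for $d\leq 25$), so your caveats concern the paper's setup rather than your deduction.
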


\begin{proof}
    Once rearranged, the conclusion of Proposition \ref{prop.variance.bound.general.case} reads
    \begin{align*}
        \PP\Bigg\{\forall x\in\RR^d, \ \big|\wh f(x) - \ol f(x)\big| \leq \sqrt{\frac{1}{Hk(x)}\log\Big(\frac{K_dn^{2d + 1}}{\delta}\Big)}\Bigg\} \geq 1 - \delta.
    \end{align*}
    We now apply this result with 
    \begin{align*}
        \eta_- \geq \sqrt{\frac{\log K_d}H\bigg(1 + \frac{2d + \tau + 1}{\log K_d}\log n\bigg)}
    \end{align*}
    to obtain
    \begin{align*}
        \PP\Bigg\{\sup_{x \in \RR^d}k(x)^{1/2}|\wh f(x) - \ol f(x)| \geq \eta_-\Bigg\} \leq n^{-\tau}.
    \end{align*}
    Finally, since $n\geq 3, \ \log n \geq 1$ and we can pick
    \begin{align*}
        \eta = \sqrt{C_\tau\log n} = \sqrt{\frac{\log K_d}{H}\bigg(1 + \frac{2d + \tau + 1}{\log K_d}\bigg)\log n} \geq \eta_-,
    \end{align*}
    and obtain the claimed result.
\end{proof}
We define the event
\begin{align*}
    V_n^{\P}(\wh f) := \Bigg\{\sup_{x \in \RR^d}k(x)^{1/2}\big|\wh f(x) - \ol f(x)\big| \leq \sqrt{C_\tau\log n}\Bigg\}.
\end{align*}

\begin{proposition}[Bias-Variance decomposition]
    \label{prop.decomp.knn}
    Under the assumptions of model \eqref{eq.model}, if $\wh f$ is a local $k$-NN with neighbour function $k\colon \RR^d \to \{\lceil \log n\rceil, \dots, n\},$ and if $\P \in \mP,$ then, for all $n \geq 2d \vee 3$ and all $\Delta \subseteq \Theta_n^{\P}(S_\tau, k),$
    \begin{multline*}
        \Bigg\{\forall x \in \RR^d,\ \big(\wh f(x) - f(x)\big)^2 \leq 2C_\tau\frac{\log n}{k(x)} + 8L^2\bigg(\frac{S_\tau k(x)}{np(x)}\bigg)^{2\beta/d}\mathds{1}(x \in \Delta) + 8L^2\mathds{1}(x \in \Delta^c)\Bigg\}\\
        \supseteq \bigg(\bigcap_{i \in k(\RR^d)}U_n^{\P}(i)\bigg)\bigcap V_n^{\P}(\wh f).
    \end{multline*}
\end{proposition}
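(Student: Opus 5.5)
We work on the intersection $\big(\bigcap_{i \in k(\RR^d)}U_n^{\P}(i)\big)\cap V_n^{\P}(\wh f)$ and show that the pointwise inequality holds for every $x \in \RR^d$. The starting point is the elementary decomposition
\begin{align*}
    \big(\wh f(x) - f_*(x)\big)^2 \leq 2\big(\wh f(x) - \ol f(x)\big)^2 + 2\big(\ol f(x) - f_*(x)\big)^2,
\end{align*}
obtained from $(a+b)^2 \leq 2a^2 + 2b^2$, with $\ol f$ as in \eqref{eq.average.knn}. The two terms are then bounded separately, and neither bound requires further probabilistic work.

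For the stochastic term, on $V_n^{\P}(\wh f)$ we have $k(x)^{1/2}|\wh f(x) - \ol f(x)| \leq \sqrt{C_\tau \log n}$ for every $x$. Squaring and multiplying by $2$ gives the term $2C_\tau \log n / k(x)$ uniformly in $x$. This event comes from Corollary \ref{cor.variance.bound.nice.version}, whose condition $n \geq 3\vee 2d$ is exactly the assumption in the statement.

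For the bias term, we split according to whether $x \in \Delta$. If $x \in \Delta^c$, the first statement of Lemma \ref{lem.bias.term} gives $|\ol f(x) - f_*(x)| \leq 2L$, so $2(\ol f - f_*)^2 \leq 8L^2$. If $x \in \Delta \subseteq \Theta_n^{\P}(S_\tau, k)$, we use the second statement of Lemma \ref{lem.bias.term}, which applies because $k$ takes values in $\{\lceil \log n\rceil, \dots, n\}$ and $\P \in \mP$. On $\bigcap_{i\in k(\RR^d)} U_n^{\P}(i)$ it yields $|\ol f(x) - f_*(x)| \leq (S_\tau k(x)/(np(x)))^{\beta/d}$, which should be read with the factor $L$ carried over from $|\ol f - f_*| \leq L R_{k(x)}(x)^\beta$ together with Lemma \ref{lem.bound.neighbours.density}. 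Squaring and doubling gives at most $2L^2 (S_\tau k(x)/(np(x)))^{2\beta/d}$, which is dominated by $8L^2(\cdot)^{2\beta/d}$. Combining the two bounds yields the claimed inclusion.

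The only delicate point is the constant bookkeeping in the second statement of Lemma \ref{lem.bias.term}, where the factor $L$ is not displayed. I would reprove that step directly to be safe. For $x \in \Theta_n^{\P}(S_\tau,k)$ we have $x \in \Delta_n^{\P}(S_\tau, k(x))$, so on $U_n^{\P}(k(x))$ Lemma \ref{lem.bound.neighbours.density} gives $R_{k(x)}(x)^d \leq S_\tau k(x)/(np(x))$. Monotonicity $R_i \leq R_{k(x)}$ for $i \leq k(x)$ then gives $|\ol f(x) - f_*(x)| \leq L R_{k(x)}(x)^\beta$, and the bound follows with the constant $8L^2$ to spare.
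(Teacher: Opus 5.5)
Your proof is correct and follows the paper's argument: the same split $(\wh f - f_*)^2 \leq 2(\wh f - \ol f)^2 + 2(\ol f - f_*)^2$, with $V_n^{\P}(\wh f)$ giving the $2C_\tau \log n / k(x)$ term, the trivial bound $|\ol f - f_*| \leq 2L$ on $\Delta^c$, and Lemma~\ref{lem.bias.term} together with Lemma~\ref{lem.bound.neighbours.density} on $\Delta \subseteq \Theta_n^{\P}(S_\tau, k)$. Your direct re-derivation restores the factor $L$ that is missing from the displayed second statement of Lemma~\ref{lem.bias.term}, which is a worthwhile check, and it shows the constant $8L^2$ on $\Delta$ is generous (you only need $2L^2$).
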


\begin{proof}
    Lemmas \ref{lem.bias.term} and \ref{cor.variance.bound.nice.version} imply that, on the event
    \begin{align*}
        \bigg(\bigcap_{i \in k(\RR^d)} U_n^{\P}(i)\bigg)\bigcap V_n^{\P}(\wh f),
    \end{align*}
    for all $x \in \RR^d,$
    \begin{align*}
        \big(\wh f(x) - f(x)\big)^2 &\leq 2\big|\wh f(x) - \ol f(x)\big|^2 + 2|\ol f(x) - f(x)\big|^2\\
        &\leq \frac{2C_\tau \log n}{k(x)} + 8L^2\bigg(\frac{S_\tau k(x)}{np(x)}\bigg)^{2\beta/d}\mathds{1}(x \in \Delta) + 8L^2\mathds{1}(x \in \Delta^c).
    \end{align*}
    This finishes the proof.
\end{proof}

\subsection{Proofs for density estimator}

In the proofs of our upper bounds, we choose a neighbour function $k(x)$ that depends on an $\ell$-NN density estimator of the corresponding covariates' density. The definition of a $\ell$-nearest neighbour density estimator $\wh p$ is given by \eqref{eq.density.estimator}. We begin with a preliminary result showing that $\wh p \asymp p$ in a high-density region.

\begin{lemma}
    \label{lem.density.ratio}
    Let $\P \in \mP(D, \theta)$ with $D, \theta > 0.$ Denote by $p$ the density of $\P$, and let $\wh p$ be the $\ell$-NN density estimator obtained from $n$ i.i.d.\ samples from $\P.$ Work with $n \geq 3\vee 2d.$
    \begin{enumerate}
        \item [$(i)$] If $\ell \geq \lceil V_\tau\log n\rceil,$ then
        \begin{align*}
            L_n^{\P}(\ell) \subseteq \Bigg\{\sup_{x \in \Delta_n^{\P}(c_2, \ell)} \frac{\wh p(x)}{p(x)} \leq \frac{\theta}{c_1}\Bigg\}.
        \end{align*}
        \item [$(ii)$] If $\ell \geq \lceil \log n \rceil,$ then
        \begin{align*}
            U_n^{\P}(\ell) \subseteq \Bigg\{\inf_{x \in \Delta_n^{\P}(S_\tau, \ell)} \frac{\wh p(x)}{p(x)} \geq \frac{1}{S_\tau}\Bigg\}
        \end{align*}
    \end{enumerate}
\end{lemma}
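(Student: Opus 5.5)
The plan is to translate each statement about $\wh p(x)/p(x)$ into a statement about $p(x)R_\ell(x)^d$, and then quote the two neighbour-distance lemmas. By definition \eqref{eq.density.estimator}, $\wh p(x) = \ell/(nR_\ell(x)^d)$, so for every $x$ with $p(x)>0$,
\begin{align*}
    \frac{\wh p(x)}{p(x)} = \frac{\ell}{n\,p(x)R_\ell(x)^d}.
\end{align*}
An upper bound on the ratio is therefore a lower bound on $p(x)R_\ell(x)^d$, and vice versa. Points in the sets $\Delta_n^{\P}(c,\ell)$ have $p(x)\geq c\ell/n>0$, so the ratio is well defined there. We also note that $R_\ell(x)>0$ almost surely, since $\P$ has a density.

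For $(i)$, take $\ell \geq \lceil V_\tau\log n\rceil$ and $n\geq 2d$, which are exactly the hypotheses of Lemma \ref{lem.lower.bound.neighbours.density}. On $L_n^{\P}(\ell)$ that lemma gives
\begin{align*}
    \inf_{x\in\Delta_n^{\P}(c_2,\ell)} p(x)R_\ell(x)^d \geq \frac{c_1\ell}{\theta n}.
\end{align*}
Its statement is written with a strict inequality on the complement, but the containment established in its proof is the non-strict one. Substituting into the identity above gives
\begin{align*}
    \sup_{x\in\Delta_n^{\P}(c_2,\ell)}\frac{\wh p(x)}{p(x)} \leq \frac{\ell}{n}\cdot\frac{\theta n}{c_1\ell} = \frac{\theta}{c_1}.
\end{align*}

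For $(ii)$, take $\ell\geq\lceil\log n\rceil$ and $n\geq 2d$, and apply Lemma \ref{lem.bound.neighbours.density}. On $U_n^{\P}(\ell)$ it gives
\begin{align*}
    \sup_{x\in\Delta_n^{\P}(S_\tau,\ell)} p(x)R_\ell(x)^d\leq \frac{S_\tau\ell}{n},
\end{align*}
hence $\wh p(x)/p(x)\geq \ell/(n\cdot S_\tau\ell/n)=1/S_\tau$ uniformly on that set.

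No step here is a real obstacle; the work is bookkeeping. First, the constants must match those in the lemmas: $c_2=\theta c_1$ and $S_\tau=\theta B_\tau$ are the thresholds defining the high-density sets. Second, the events $L_n^{\P}$ and $U_n^{\P}$ must be applied at the same single index $\ell$. Third, one should check that the inequality directions survive the inversion, which they do because both $p(x)$ and $R_\ell(x)$ are positive on these sets.
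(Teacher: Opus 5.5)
Your proposal is correct and is essentially the paper's proof: you rewrite $\wh p(x)/p(x) = \ell/(np(x)R_\ell(x)^d)$, then apply Lemma \ref{lem.lower.bound.neighbours.density} on $L_n^{\P}(\ell)$ for $(i)$ and Lemma \ref{lem.bound.neighbours.density} on $U_n^{\P}(\ell)$ for $(ii)$. Your strict-versus-non-strict remark is harmless either way: the lemma as stated yields the strict bound on $L_n^{\P}(\ell)$, which implies the non-strict bound you use.
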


\begin{proof}
    $(i)$ We work on the event $L_n^{\P}(\ell).$ For all $x \in \Delta_n^{\P}(c_2, \ell)$ we have
    \begin{align*}
        \frac{\wh p(x)}{p(x)} = \frac{\ell}{np(x)R_{\ell}(x)^d} \leq \frac{\theta}{c_1},
    \end{align*}
    where the last inequality comes from Lemma \ref{lem.lower.bound.neighbours.density}. This shows that 
    \begin{align*}
        L_n^{\P}(\ell) \subseteq \Bigg\{\sup_{x \in \Delta_n^{\P}(c_2, \ell)} \frac{\wh p(x)}{p(x)} \leq \frac{\theta}{c_1}\Bigg\}.
    \end{align*}
    $(ii)$ We work on the event $U_n^{\P}(\ell).$ For all $x \in \Delta_n^{\P}(S_\tau, \ell),$ we have
    \begin{align*}
        \frac{\wh p(x)}{p(x)} = \frac{\ell}{np(x)R_\ell(x)^d} \geq \frac{1}{S_\tau},
    \end{align*}
    where the last inequality comes from Lemma \ref{lem.bound.neighbours.density}. This shows that 
    \begin{align*}
        U_n^{\P}(\ell) \subseteq \Bigg\{\inf_{x \in \Delta_n^{\P}(c_2, \ell)} \frac{\wh p(x)}{p(x)} \geq \frac{1}{S_\tau}\Bigg\}.
    \end{align*}
\end{proof}

\begin{remark}
    This shows that, if $\ell \geq \lceil V_\tau\log n\rceil,$ and $n\geq 3\vee 2d,$ then, on $L_n^{\P}(\ell) \cap U_n^{\P}(\ell),$ for all $x \in \Delta_n^{\P}(S_\tau\vee c_2, \ell),$
    \begin{align*}
        \frac{1}{S_\tau} \leq \frac{\wh p(x)}{p(x)}\leq \frac{\theta}{c_1}.
    \end{align*}
\end{remark}
\begin{lemma}
    \label{lem.inclusions}
    Let $\P \in \mP,\ \tau > 0$ and $\k > 0.$ Consider the function
    \begin{align*}
        k(x) = n \wedge \Big\lceil\k\log(n)^{d/(2\beta + d)}\big(n\wh p(x)\big)^{2\beta/(2\beta + d)} \Big\rceil \vee \lceil \log n\rceil, 
    \end{align*}
    where $\wh p$ is a $\ell$-NN density estimator for $\P,$ with $\ell \geq \lceil V_\tau\log n\rceil.$ Let $N = N(\k, \theta, D) \geq 3\vee 2d$ satisfy
    \begin{align*}
        \frac N{\log N} \geq \k^{(2\beta + d)/d}\bigg(\frac{\theta D}{c_1}\bigg)^{2\beta/d},
    \end{align*}
    and define the constant
    \begin{align*}
        C := S_\tau\vee c_2 \vee \frac{S_\tau}{\k^{(2\beta + d)/(2\beta)}V_\tau} \vee \frac{(2S_\tau\k)^{(2\beta + d)/d}}{V_\tau}\bigg(\frac{\theta}{c_1}\bigg)^{2\beta/d}.
    \end{align*}
    Then, the following assertions hold
    \begin{enumerate}
        \item [$(i)$] For all $n \geq N,$ 
            \begin{align*}
                L_n^{\P}(\ell)\cap U_n^{\P}(\ell) \subseteq \Bigg\{\forall x \in \Delta_n^{\P}(C, \ell), \ k(x) = \Big\lceil\k\log(n)^{d/(2\beta + d)}\big(n\wh p(x)\big)^{2\beta/(2\beta + d)}\Big\rceil\Bigg\}.
            \end{align*}
        \item [$(ii)$] For all $n \geq N,$
            \begin{align*}
                 L_n^{\P}(\ell)\cap U_n^{\P}(\ell) \subseteq \Bigg\{\Delta_n^{\P}(C, \ell) \subseteq \Theta_n^{\P}(S_\tau, k)\Bigg\}.
            \end{align*}
    \end{enumerate}
\end{lemma}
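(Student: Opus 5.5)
We work throughout on the event $L_n^{\P}(\ell)\cap U_n^{\P}(\ell)$ and fix $x \in \Delta_n^{\P}(C, \ell)$, so that $p(x) \geq C\ell/n$. Since $C \geq S_\tau \vee c_2$, we have $\Delta_n^{\P}(C,\ell) \subseteq \Delta_n^{\P}(S_\tau\vee c_2, \ell)$. The remark following Lemma \ref{lem.density.ratio} then gives the two-sided comparison
\begin{align*}
    \frac{p(x)}{S_\tau} \leq \wh p(x) \leq \frac{\theta p(x)}{c_1}.
\end{align*}
Everything else follows from these bounds, using $\ell \geq \lceil V_\tau \log n\rceil$ to turn $p(x) \geq C\ell/n$ into $np(x) \geq C V_\tau \log n$. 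Write $g(x) := \k\log(n)^{d/(2\beta+d)}(n\wh p(x))^{2\beta/(2\beta+d)}$.

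For $(i)$, two clippings must be shown inactive. First, the lower clipping: we need $\lceil g(x)\rceil \geq \lceil \log n\rceil$, and it suffices that $g(x) \geq \log n$. Using $n\wh p(x) \geq np(x)/S_\tau \geq CV_\tau\log n/S_\tau$, we get $g(x) \geq \k (CV_\tau/S_\tau)^{2\beta/(2\beta+d)}\log n$. This is at least $\log n$ as soon as $C \geq S_\tau/(\k^{(2\beta+d)/(2\beta)}V_\tau)$, which is the third term in the definition of $C$. Second, the upper clipping: we need $\lceil g(x)\rceil \leq n$. Using $\wh p \leq \theta D/c_1$ (since $p\leq D$), we get $g(x) \leq \k \log(n)^{d/(2\beta+d)} (n\theta D/c_1)^{2\beta/(2\beta+d)}$. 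This is at most $n-1$ when $(n/\log n)^{d/(2\beta+d)} \gtrsim \k(\theta D/c_1)^{2\beta/(2\beta+d)}$, which is exactly the defining inequality for $N$. The integer rounding is where some care is needed; I would absorb it by slightly tightening, or argue that $g(x)\le n$ already forces $\lceil g(x)\rceil \le n$.

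For $(ii)$, we must show that $p(x) \geq S_\tau k(x)/n$. By $(i)$, $k(x) = \lceil g(x)\rceil \leq 2g(x)$, since $g(x)\geq \log n \geq 1$ when $n\geq 3$. Hence it suffices to have $np(x) \geq 2S_\tau g(x)$. Bounding $\wh p(x) \le \theta p(x)/c_1$ reduces this to
\begin{align*}
    (np(x))^{d/(2\beta+d)} \geq 2S_\tau\k (\theta/c_1)^{2\beta/(2\beta+d)} \log(n)^{d/(2\beta+d)},
\end{align*}
that is, $np(x) \geq (2S_\tau\k)^{(2\beta+d)/d}(\theta/c_1)^{2\beta/d}\log n$. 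Since $np(x) \geq CV_\tau\log n$, this holds by the fourth term in $C$.

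The main obstacle is purely bookkeeping: matching the exponents so that each piece of $C$ and $N$ appears exactly as stated, and handling the ceilings without losing the constants. No probabilistic work is needed beyond the event on which the density-ratio remark holds.
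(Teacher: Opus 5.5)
Your proposal is correct and follows the paper's proof essentially step for step. It uses the two-sided comparison $p/S_\tau \le \wh p \le \theta p/c_1$ on $\Delta_n^{\P}(S_\tau\vee c_2,\ell)$, the third term of $C$ and the defining inequality for $N$ (via monotonicity of $t\mapsto t/\log t$ for $t\ge e$) to remove the two clippings, and $\lceil g\rceil\le 2g$ with the fourth term of $C$ to get $p(x)\ge S_\tau k(x)/n$. The rounding at the upper clip needs no tightening: $g(x)\le n$ with $n$ an integer already gives $\lceil g(x)\rceil\le n$, so the ``$n-1$'' detour can be dropped.
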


\begin{proof}
    Work on $L_n^{\P}(\ell)\cap U_n^{\P}(\ell).$ We obtain $(i)$ by showing that for all $x \in \Delta_n^{\P}(C, \ell),$
    \begin{align*}
        \log n \leq \k\log(n)^{d/(2\beta + d)}\big(n\wh p(x)\big)^{2\beta/(2\beta + d)} \leq n.
    \end{align*}
    Let $x \in \Delta_n^{\P}(C, \ell),$ and $u(x) := \k\log(n)^{d/(2\beta + d)}(n\wh p(x))^{2\beta/(2\beta + d)}.$ We begin by applying Lemma \ref{lem.density.ratio} and obtain, since $\Delta_n^{\P}(C, \ell) \subseteq \Delta_n^{\P}(S_\tau \vee c_2, \ell),$
    \begin{align}
        \label{eq.double.ineq}
        \k\log(n)^{d/(2\beta + d)}\bigg(\frac{np(x)}{S_\tau}\bigg)^{2\beta/(2\beta + d)} \leq u(x) \leq \k\log(n)^{d/(2\beta + d)}\bigg(\frac{\theta n D}{c_1}\bigg)^{2\beta/(2\beta + d)}.
    \end{align}
    It is immediate to check that the leftmost term in \eqref{eq.double.ineq} is lower bounded by $\log(n)$ if
    \begin{align*}
        p(x) \geq \frac{S_\tau}{\k^{(2\beta + d)/(2\beta)}}\frac{\log n}{n},
    \end{align*}
    which is true a fortiori, for all $x \in \Delta_n^{\P}(C, \ell)$ since
    \begin{align*}
        p(x) \geq \frac{C\ell}{n} \geq \frac{S_\tau}{\k^{(2\beta + d)/(2\beta)}V_\tau}\frac{\ell}{n} \geq \frac{S_\tau}{\k^{(2\beta + d)/(2\beta)}}\frac{\log n}{n}.
    \end{align*}
    The rightmost term in \eqref{eq.double.ineq} is upper bounded by $n$ if
    \begin{align*}
        \frac n{\log n} \geq \k^{(2\beta + d)/d}\bigg(\frac{\theta D}{c_1}\bigg)^{2\beta/d},
    \end{align*}
    which is true by assumption. We have proven the two previous facts for arbitrary $x \in \Delta_n^{\P}(C, \ell),$ therefore,
    \begin{align*}
        L_n^{\P}(\ell)\cap U_n^{\P}(\ell) \subseteq \Bigg\{\forall x \in \Delta_n^{\P}(C, \ell), \ k(x) = \Big\lceil\k\log(n)^{d/(2\beta + d)}\big(n\wh p(x)\big)^{2\beta/(2\beta + d)}\Big\rceil\Bigg\}.
    \end{align*}
    We now prove $(ii).$ Let $x \in \Delta_n^{\P}(C, \ell).$ For $n \geq e \vee N, \ (i)$ implies that $k(x) \geq \lceil \log n\rceil \geq 1.$ Hence, 
\begin{align*}
    \frac {S_\tau k(x)}n &\leq \frac{2S_\tau\k\log(n)^{d/(2\beta + d)}(n\wh p(x))^{2\beta/(2\beta + d)}}{n}\\
    &\leq 2S_\tau\k \bigg(\frac{\log n}{n}\bigg)^{d/(2\beta + d)}\bigg(\frac{\theta p(x)}{c_1}\bigg)^{2\beta/(2\beta + d)},
\end{align*}
and the latter is upper-bounded by $p(x)$ since
\begin{align*}
    p(x) \geq \frac{C\ell}{n} \geq \frac{(2S_\tau\k)^{(2\beta + d)/d}}{V_\tau}\bigg(\frac{\theta}{c_1}\bigg)^{2\beta/d}\frac{\ell}{n} \geq (2S_\tau\k)^{(2\beta + d)/d}\bigg(\frac{\theta}{c_1}\bigg)^{2\beta/d}\frac{\log n}{n}.
\end{align*}
We have proven that
\begin{align*}
    p(x) \geq \frac {C\ell}{n} \Rightarrow p(x) \geq \frac{S_\tau k(x)}{n},
\end{align*}
and, since $x \in \Delta_n^{\P}(C, \ell)$ was arbitrary,
\begin{align*}
    L_n^{\P}(\ell)\cap U_n^{\P}(\ell) \subseteq \Bigg\{\Delta_n^{\P}(C, \ell) \subseteq \Theta_n^{\P}(S_\tau, k)\Bigg\}.
\end{align*}
This concludes the proof.
\end{proof}

\section{Proofs of the main results}
\label{app.main.results}

\subsection{Proof of Corollary \ref{cor.upper.bound.one.sample.P}}

Let $\P_{\sX}, \Q\mathstrut_{\!\sX} \in \mP(D, \theta).$ Let $X \sim \P_{\sX},\ X' \sim \Q\mathstrut_{\!\sX},\ Y = f_*(X) + \eps,$ and $Y' \sim f_*(X') + \eps',$ where $\eps, \eps'$ are two i.i.d.\ random variables that are independent from $X$ and $X'$ and sub-exponential with parameters $\alpha, \nu > 0.$ Let $(X_1, Y_1), \dots, (X_n, Y_n)$ be $n$ i.i.d.\ copies of $(X, Y)$ and $(X_1', Y_1'), \dots, (X_m', Y_m')$ be $m$ i.i.d.\ copies of $(X', Y').$ For $k \in \NN$ and $x \in \RR^d,$ whenever those quantities make sense, we denote by $R_k^{\P}(x) := \|x - X_k(x)\|$ and by $R_k^{\Q}(x) := \|x - X_k'(x)\|.$ Consider the local $k$-NN estimator
\begin{align}
    \label{eq.weighted.knn}
    \wh f(x) = \frac{1}{k(x)} \sum_{i=1}^n Y_i(x),
\end{align}
where $k(x) \colon \RR^d \to \{1, \dots, n\}$ and $Y_i(x)$ is the label of the $i$-th nearest neighbour of $x$ among $\{X_1, \dots, X_n\}.$ Our next result concerns the local $k$-NN estimator. Our choice of the neighbour function $k$ involves a density-estimation step.

\begin{theorem}
    \label{th.transfer.local.knn.proof}
    Let $\gamma > 0, \ \kappa > 0.$ Let $C$ be the constant defined in Lemma \ref{lem.inclusions}, $r := \gamma\wedge 2\beta/(2\beta + d),$ and $\mT(r) := \int q(x)/p(x)^r\, dx.$ If $\wh p$ is a $\ell$-NN density estimator for $p$ with $\ell \geq \lceil V_\tau \log n\rceil,$ and $\wh f$ is a local $k$-NN estimator defined in \eqref{eq.weighted.knn} with neighbour function $k$ given by
    \begin{align*}
        k(x) := n\wedge \Big\lceil \kappa\log(n)^{d/(2\beta + d)}(n\wh p(x))^{2\beta/(2\beta + d)}\Big\rceil \vee \big\lceil \log n \big\rceil.
    \end{align*}
    then, there exists an $N > 2d\vee e$ such that for all $n \geq N,$ with probability at least $1 - 3n^{1- \tau},$
    \begin{multline*}
        \big\|\wh f - f_*\big\|_{L^2(\Q\mathstrut_{\!\sX})}^2 \leq \bigg(\frac{V_\tau}{C}\bigg)^{r - 2\beta/(2\beta +d)}\Bigg[\frac{2C_\tau}{\kappa}\big(S_\tau\big)^{2\beta/(2\beta + d)} + A\Bigg]\mT_{\P_{\sX}}(r)\bigg(\frac{\log n}{n}\bigg)^{r}\\
        + \Bigg[2C_\tau + 8L^2\Bigg]\mT_{\P_{\sX}}(r)\bigg(\frac{C\ell}{n}\bigg)^{r}.
    \end{multline*}
    with
    \begin{align*}
        A := 8L^2\big(2\k S_\tau\big)^{2\beta/d}\bigg(\frac{\theta}{c_1}\bigg)^{4\beta^2/(d(2\beta + d))}.
    \end{align*}
\end{theorem}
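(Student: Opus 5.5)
We work on the intersection of good events $E := L_n^{\P}(\ell)\cap U_n^{\P}(\ell)\cap\big(\bigcap_{i\in k(\RR^d)}U_n^{\P}(i)\big)\cap V_n^{\P}(\wh f)$. By Proposition \ref{prop.neighbours.distance.lower.bound.zeta}, Remark \ref{rem.bound.variable.k} (for the union over $i$) and Corollary \ref{cor.variance.bound.nice.version}, its complement has probability at most $n^{-\tau}+n^{1-\tau}+n^{-\tau}\le 3n^{1-\tau}$. There is one subtlety: $k$ depends on $\wh p$, hence on $X_1,\dots,X_n$, but not on the labels. The conditional argument of Proposition \ref{prop.variance.bound.general.case} therefore still applies, since it is uniform over $k(\cdot)$ with values in $\{1,\dots,n\}$.

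On $E$ we set $\Delta:=\Delta_n^{\P}(C,\ell)$. By Lemma \ref{lem.inclusions}(ii), $\Delta\subseteq\Theta_n^{\P}(S_\tau,k)$, so Proposition \ref{prop.decomp.knn} applies with this $\Delta$. We then integrate the pointwise bound against $\Q\mathstrut_{\!\sX}$ and split it into the integral over $\Delta$ and over $\Delta^c$.

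\textbf{Integral over $\Delta^c$.} Here the integrand is bounded by $2C_\tau\log n/k(x)+8L^2\le 2C_\tau+8L^2$, because $k\ge\lceil\log n\rceil$. Markov's inequality in the form stated in Section \ref{sec.4.1} gives $\Q\mathstrut_{\!\sX}(\Delta^c)=\Q\mathstrut_{\!\sX}\{p<C\ell/n\}\le (C\ell/n)^r\mT(r)$. This yields the second term of the statement.

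\textbf{Integral over $\Delta$.} By Lemma \ref{lem.inclusions}(i), $k(x)$ equals the ceiling formula. Lemma \ref{lem.density.ratio} gives $S_\tau^{-1}\le \wh p/p\le \theta/c_1$, which yields the two bounds
\begin{align*}
\frac{\log n}{k(x)}\le \frac{S_\tau^{r_\beta}}{\kappa}\Big(\frac{\log n}{np(x)}\Big)^{r_\beta},\qquad \Big(\frac{S_\tau k(x)}{np(x)}\Big)^{2\beta/d}\le \big(2\kappa S_\tau\big)^{2\beta/d}\Big(\frac{\theta}{c_1}\Big)^{4\beta^2/(d(2\beta+d))}\Big(\frac{\log n}{np(x)}\Big)^{r_\beta}.
\end{align*}
The second bound uses $\lceil u\rceil\le 2u$ for $u\ge1$ and the identity $\tfrac{2\beta}{d}\big(\tfrac{2\beta}{2\beta+d}-1\big)=-r_\beta$. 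Both terms are therefore of order $(\log n/(np(x)))^{r_\beta}$. To pass from the exponent $r_\beta$ to $r\le r_\beta$, we use that on $\Delta$ we have $np(x)/\log n\ge C\ell/\log n\ge C V_\tau^{-1}$-type lower bounds. Writing $(\log n/(np))^{r_\beta}=(\log n/(np))^{r}(\log n/(np))^{r_\beta-r}$, the last factor is bounded by $(V_\tau/C)^{r-r_\beta}$ up to the direction of the inequality, which we must check carefully. Integrating $p^{-r}$ against $q$ then gives $\mT(r)(\log n/n)^r$, which is the first term with the prefactor $(V_\tau/C)^{r-r_\beta}[2C_\tau S_\tau^{r_\beta}/\kappa+A]$.

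\textbf{Main obstacle.} The delicate step is this exponent reduction from $r_\beta$ to $r$ on $\Delta$. It requires a lower bound $np(x)\gtrsim\log n$ with the exact constant $C\ell\ge C\lceil V_\tau\log n\rceil$, so the factor $(\log n/(np))^{r_\beta-r}$ is bounded by a constant. I would first verify that $\ell/\log n\ge V_\tau$ gives $\log n/(np)\le 1/(CV_\tau)$; if that produces a slightly different constant than $(V_\tau/C)^{r-r_\beta}$, I would adjust the bookkeeping accordingly. The second point to settle is the choice of $N$: it must dominate both the $N$ from Lemma \ref{lem.inclusions} and $2d\vee e$.
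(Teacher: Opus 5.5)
Your proposal follows the paper's proof essentially step for step: the same good event, Lemma~\ref{lem.inclusions} to get $\Delta\subseteq\Theta_n^{\P}(S_\tau,k)$, Proposition~\ref{prop.decomp.knn}, the two pointwise bounds of order $(\log n/(np(x)))^{r_\beta}$ on $\Delta$, and Markov's inequality on $\Delta^c$. Your explicit inclusion of $\bigcap_{i}U_n^{\P}(i)$ in the event and your remark that $k$ depends only on the covariates are more careful than the paper.

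The step you flag closes without any adjustment. On $\Delta$ you get $(\log n/(np(x)))^{r_\beta-r}\le (CV_\tau)^{r-r_\beta}$, which is also what the paper's own step actually produces. This implies the stated factor $(V_\tau/C)^{r-r_\beta}$, because $r-r_\beta\le 0$ and $C\ge S_\tau=\theta B_\tau\ge 1$. Here $B_\tau\ge 2$, and $\theta\ge 1$ because Lebesgue differentiation applied to both sides of \eqref{eq.minimal.maximal.mass} forces $\theta^{-1}\le\Leb(\B(0,1))\le\theta$.
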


\begin{remark}
    In particular, the choice $\ell = \big\lceil V_\tau \log n\big\rceil$ in Theorem \ref{th.transfer.local.knn.proof} leads to
    \begin{align*}
        \big\|\wh f - f_*\big\|_{L^2(\Q\mathstrut_{\!\sX})}^2 \leq V_\tau^r\mT_{\P_{\sX}}(r)\Bigg[\frac{2C_\tau}{\kappa}\bigg(\frac{S_\tau C}{V_\tau}\bigg)^{r_\beta} + A\bigg(\frac{C}{C_\tau}\bigg)^{r_\beta} + 2^r\bigg[2C_\tau + 8L^2\bigg]\Bigg]\bigg(\frac{\log n}n\bigg)^{r},
    \end{align*}
    with probability at least $1 - 3n^{1-\tau}.$ Additionally, the statement of Corollary \ref{cor.upper.bound.one.sample.P} is obtained after choosing $\k = 1.$
\end{remark}

\begin{proof}
    Let $X \sim \Q\mathstrut_{\!\sX}, \ C$ be the constant defined in Lemma \ref{lem.inclusions}, and denote $\Delta = \Delta_n^{\P_{\sX}}(C, \ell).$ We work on the event 
    \begin{align}
        \label{eq.event.local.knn}
        L_n^{\P_{\sX}}\cap U_n^{\P_{\sX}}(\ell)\cap V_n^{\P_{\sX}}(\wh f).
    \end{align}
    By Lemma \ref{lem.inclusions} $(ii),$ $\Delta \subseteq \Theta_n^{\P_{\sX}}(S_\tau, k),$ and, by definition, $k(x) \geq \lceil \log n\rceil$. Hence, we can apply Proposition \ref{prop.decomp.knn} to obtain
    \begin{align}
        \label{eq.local.k.proof.decomp}
        \big\|\wh f - f_*\big\|_{L^2(\Q\mathstrut_{\!\sX})}^2 &\leq \E\Bigg[\frac{2C_\tau\log n}{k(X)} + 16L^2\bigg(\frac{S_\tau k(X)}{np(X)}\bigg)^{2\beta/d}\mathds{1}(X \in \Delta) + 8L^2\mathds{1}(X \in \Delta^c)\Bigg].
    \end{align}
    We first derive pointwise bounds on the three quantities occurring inside the expectation operator in \eqref{eq.local.k.proof.decomp}. For the first term, using the definition of $k,$ Lemma \ref{lem.inclusions}, and Lemma \ref{lem.density.ratio} $(ii)$, we get
    \begin{align}
        \label{eq.local.one.sample.pointwise.variance.bound}
        \frac{2C_\tau\log n}{k(x)} &\leq  \frac{2C_\tau\log n}{\kappa \log(n)^{d/(2\beta + d)} (n \wh p(x))^{2\beta/(2\beta + d)}}\mathds{1}(x \in \Delta) +  \frac{2C_\tau\log n}{\lceil \log n\rceil}\mathds{1}(x \in \Delta^c)\nonumber\\
        &\leq \frac{2C_\tau}{\k}\bigg(\frac{S_\tau \log n}{np(x)}\bigg)^{2\beta/(2\beta + d)}\mathds{1}(x \in \Delta) + 2C_\tau\mathds{1}(x \in \Delta^c).
    \end{align}
    Using Lemma \ref{lem.inclusions} $(i)$ and the condition $n \geq N\vee e,$ we have $k(x) \geq \lceil \log n\rceil \geq 1.$ This allows to use $\lceil a \rceil \leq 2a$ with $a$ being the expression inside the ceiling operator in the expression of $k(x).$ Additionally, we use Lemma \ref{lem.density.ratio} $(i)$ to bound the second term in \eqref{eq.local.k.proof.decomp} as follows
    \begin{align}
        \label{eq.local.one.sample.pointwise.bias.hd.bound}
        8L^2\bigg(\frac{S_\tau k(x)}{np(x)}\bigg)^{2\beta/d} & \leq 8L^2\big(S_\tau\big)^{2\beta/d}\bigg(\frac{2\k\log(n)^{d/(2\beta + d)}(n\wh p(x))^{2\beta/ (2\beta + d)}}{np(x)}\bigg)^{2\beta/d} \nonumber\\
        &\leq 8L^2\big(S_\tau\big)^{2\beta/d}\Bigg(2\k\bigg(\frac{\theta}{c_1}\bigg)^{2\beta/(2\beta + d)}\bigg(\frac{\log(n)}{np(x)}\bigg)^{d/(2\beta + d)}\Bigg)^{2\beta/d} \nonumber\\
        &\leq \underbrace{8L^2\big(2\k S_\tau\big)^{2\beta/d}\bigg(\frac{\theta}{c_1}\bigg)^{4\beta^2/(d(2\beta + d))}}_{= A}\bigg(\frac{\log(n)}{np(x)}\bigg)^{2\beta/(2\beta + d)}.
    \end{align}
    We now integrate the previously derived bounds with respect to $\Q\mathstrut_{\!\sX}.$ We begin by integrating the first term of \eqref{eq.local.one.sample.pointwise.variance.bound}. This gives,
    \begin{align*}
        \int_{\Delta}\frac{2C_\tau \log n}{k(X)}\, d\Q\mathstrut_{\!\sX}(x) &\leq \frac{2C_\tau}{\kappa}\bigg(\frac{S_\tau\log n}{n}\bigg)^{2\beta/(2\beta + d)}\int_{\Delta}\frac{q(x)}{p(x)^{r}}p(x)^{r - 2\beta/(2\beta + d)}\, dx\\
        &\leq \frac{2C_\tau}{\kappa}\big(S_\tau\big)^{2\beta/(2\beta + d)}\mT_{\P_{\sX}}(r)\bigg(\frac{\log n}{n}\bigg)^{2\beta/(2\beta + d)}\bigg(\frac{C\ell}{n}\bigg)^{r - 2\beta/(2\beta + d)}\\
        &\overset{(i)}{\leq} \frac{2C_\tau}{\kappa}\big(S_\tau\big)^{2\beta/(2\beta + d)}\mT_{\P_{\sX}}(r)\bigg(\frac{V_\tau}{C}\bigg)^{r - 2\beta/(2\beta + d)}\bigg(\frac{\log n}{n}\bigg)^{r},
    \end{align*}
    where step $(i)$ follows from the fact that $r - 2\beta/(2\beta + d) \leq 0.$ For the second term in \eqref{eq.local.one.sample.pointwise.variance.bound}, we obtain
    \begin{align*}
        \int_{\Delta^c}\frac{2C_\tau\log n}{k(x)}\, d\Q\mathstrut_{\!\sX}(x) &\leq 2C_\tau\mT_{\P_{\sX}}(r)\bigg(\frac{C\ell}{n}\bigg)^r.
    \end{align*}
    Integrating \eqref{eq.local.one.sample.pointwise.bias.hd.bound} with respect to $\Q\mathstrut_{\!\sX}$ leads to 
    \begin{align*}
        \int_{\Delta}8L^2\bigg(\frac{S_\tau k(x)}{np(x)}\bigg)^{2\beta/d}\, d\Q\mathstrut_{\!\sX}(x) &\leq A\bigg(\frac{\log n}n\bigg)^{2\beta/(2\beta + d)}\int_{\Delta}\frac{q(x)}{p(x)^{r}}p(x)^{r - 2\beta/(2\beta + d)}\, dx\\
        &\leq A\mT_{\P_{\sX}}(r)\bigg(\frac{\log n}n\bigg)^{2\beta/(2\beta + d)}\bigg(\frac{C\ell}n\bigg)^{r - 2\beta/(2\beta + d)}\\
        &\leq A\mT_{\P_{\sX}}(r)\bigg(\frac{V_\tau}{C}\bigg)^{r - 2\beta/(2\beta + d)}\bigg(\frac{\log n}n\bigg)^{r}
    \end{align*}
    We now bound the last term in \eqref{eq.local.k.proof.decomp}. We use the fact that on $\Delta^c, \ p(x) < C\ell/n$ to obtain
    \begin{align}
        \label{eq.local.one.sample.bias.lp.bound}
        8L^2\Q\mathstrut_{\!\sX}\{\Delta^c\} = 8L^2\int_{\Delta^c}\frac{q(x)}{p(x)^r}p(x)^{r}\, dx \leq 8L^2\bigg(\frac{C\ell}n\bigg)^r\mT_{\P_{\sX}}(r).
    \end{align}
    Putting everything together yields
    \begin{multline*}
        \big\|\wh f - f_*\big\|_{L^2(\Q\mathstrut_{\!\sX})}^2 \leq \bigg(\frac{V_\tau}{C}\bigg)^{r - 2\beta/(2\beta +d)}\Bigg[\frac{2C_\tau}{\kappa}\big(S_\tau\big)^{2\beta/(2\beta + d)} + A\Bigg]\mT_{\P_{\sX}}(r)\bigg(\frac{\log n}{n}\bigg)^{r}\\
        + \Bigg[2C_\tau + 8L^2\Bigg]\mT_{\P_{\sX}}(r)\bigg(\frac{C\ell}{n}\bigg)^{r}.
    \end{multline*}
    Finally, Proposition \ref{prop.neighbours.distance.bound.zeta}, Proposition \ref{prop.neighbours.distance.lower.bound.zeta}, Corollary \ref{cor.variance.bound.nice.version}, and Remark \ref{rem.bound.variable.k} together with the union bound ensure that
    \begin{align*}
        \PP\Bigg\{L_n^{\P_{\sX}}(\ell)\cap U_n^{\P_{\sX}}(\ell)\cap V_n^{\P_{\sX}}(\wh f)\Bigg\} \geq 1 - 2n^{-\tau} - n^{1-\tau} \geq 1 - 3n^{1-\tau}.
    \end{align*}
\end{proof}

\subsection{Proof of Theorem \ref{th.upper.bound.two.sample}}

To obtain our main result, we need an additional lemma that allows us to lower bound $\zeta_{h_-}(x)$ for $x$ in a low-density region.
\begin{lemma}
    \label{lem.bounded.below.zeta.low.density}
    Let $\P \in \mP(D, \theta)$ with $D, \theta > 0,$ and $k \in \{1, \dots, n\}.$ If $x \in \supp(\P)$ satisfies $p(x) \leq \alpha k/n,$ with $\alpha > 0,$ then, 
    \begin{align*}
         \zeta_{h_-}(x) \geq \bigg(\frac{c_1}{\alpha \theta}\bigg)^{1/d} \wedge 1.
    \end{align*}
\end{lemma}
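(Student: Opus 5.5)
The plan is to argue by contradiction and split on whether $\zeta_{h_-}(x)$ exceeds $1$. Recall $h_- = c_1 k/n$ and, by \eqref{eq.zeta.definition}, $\zeta_{h_-}(x) = \inf\{r>0 : \P\{\B(x,r)\} \geq h_-\}$. If $\zeta_{h_-}(x) \geq 1$, the bound holds trivially because the right-hand side is at most $1$. It therefore suffices to treat the case $\zeta_{h_-}(x) < 1$ and show that then $\zeta_{h_-}(x) \geq (c_1/(\alpha\theta))^{1/d}$.

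Suppose $r_0 := \zeta_{h_-}(x) < 1$. First note that $r_0 > 0$. Since $\P$ has a bounded density, $r \mapsto \P\{\B(x,r)\}$ is continuous and vanishes at $0$, while $h_- > 0$. The same continuity, together with monotonicity, gives $\P\{\B(x,r_0)\} = h_-$; this is exactly the identity used in the proof of Lemma \ref{lem.lower.bound.neighbours.density}. Because $x \in \supp(\P)$ and $r_0 \in (0,1]$, the upper half of the local mass assumption \eqref{eq.minimal.maximal.mass} yields
\begin{align*}
    \frac{c_1 k}{n} = h_- = \P\{\B(x, r_0)\} \leq \theta p(x) r_0^d \leq \theta \alpha \frac{k}{n} r_0^d,
\end{align*}
where the last step uses the hypothesis $p(x) \leq \alpha k/n$. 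Since $k \geq 1$, we can divide by $k/n$ and rearrange to get $r_0^d \geq c_1/(\alpha\theta)$, which is the claim.

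Combining the two cases gives $\zeta_{h_-}(x) \geq (c_1/(\alpha\theta))^{1/d} \wedge 1$. There is no real obstacle here. The only points needing care are continuity of the ball-mass function, which ensures the infimum is attained with equality $\P\{\B(x,r_0)\} = h_-$ (a non-strict inequality $\geq h_-$ would suffice anyway), and the restriction $r \leq 1$ in \eqref{eq.minimal.maximal.mass}, which is exactly why the bound is capped by $1$. If $p(x) = 0$ with $x \in \supp(\P)$, the chain above yields $h_- \leq 0$, a contradiction. So $\zeta_{h_-}(x) \geq 1$ in that case, and the statement still holds.
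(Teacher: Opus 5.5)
Your proposal is correct and follows essentially the same argument as the paper: dismiss the case $\zeta_{h_-}(x) \geq 1$, and otherwise combine $h_- \leq \P\{\B(x, \zeta_{h_-}(x))\}$ with the upper local-mass bound and the hypothesis $p(x) \leq \alpha k/n$ to obtain $\zeta_{h_-}(x)^d \geq c_1/(\alpha\theta)$. The paper phrases this through an arbitrary $t \in (0,1]$ with $\zeta_{h_-}(x) \leq t^{1/d}$. You instead take $t = \zeta_{h_-}(x)^d$ directly and justify attainment of the infimum by continuity, which is the same proof stated slightly more explicitly.
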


\begin{proof}
    If $\zeta_{h_-}(x) > 1,$ then, the result is immediate. By contradiction, we show the claim in the case $\zeta_{h_-}(x) \leq 1$. Let $t \in (0, 1]$ be arbitrary and assume that $\zeta_{h_-}(x) \leq t^{1/d} \leq 1.$ Then, by \eqref{eq.minimal.maximal.mass}, we have $h_- = c_1 k/n \leq \theta tp(x).$ Rearranging the latter and using the assumption leads to
    \begin{align*}
        \frac{c_1 k}{\theta t n} \leq p(x) \leq \frac{\alpha k}n,
    \end{align*}
    which implies $t \geq c_1/(\alpha \theta).$ Hence
    \begin{align*}
         \zeta_{h_-}(x) \geq \bigg(\frac{c_1}{\theta\alpha}\bigg)^{1/d} \wedge 1.
    \end{align*}
\end{proof}

Before proving our main result, we recall the setting. Let $D, \theta > 0.$ For $\P_{\sX}, \Q\mathstrut_{\!\sX} \in \mP(D, \theta)$, we assume Model \eqref{eq.model}, under \eqref{eq.covariate.shift} where the unknown function $f_*$ lies in $\mH(L, \beta).$ We have access to
\begin{enumerate}
    \item $\{(X_1, Y_1), \dots, (X_n, Y_n)\},$ an i.i.d.\ $n$-sample from Model \eqref{eq.model} with covariates' distribution $\P_{\sX},$ and
    \item $\{(X_1', Y_1'), \dots, (X_n', Y_n')\},$ an i.i.d.\ $m$-sample from Model \eqref{eq.model} with covariates' distribution $\Q\mathstrut_{\!\sX}.$
\end{enumerate}
In what follows, we use the following estimator
\begin{align}
    \label{eq.two.sample.local.knn.appendix}
    \wh f(x) = \frac{1}{k_{\P}(x) + k_{\Q}(x)}\bigg(\sum_{i=1}^{k_{\P}(x)} Y_i(x) + \sum_{j=1}^{k_{\Q}(x)}Y_j'(x)\bigg),
\end{align}
where $k_{\P} \colon \RR^d \to \{1, \dots n\}, \ k_{\Q} \colon \RR^d \to \{1, \dots, m\},$ and $Y_i(x)$ (resp.\ $Y_j'(x)$) denotes the label associated to the $i$-th (resp.\ $j$-th) nearest neighbour of $x$ among $\{X_1, \dots, X_n\}$ (resp.\ $\{X_1', \dots, X_m'\}$).

\begin{theorem}[Rates for two-sample local nearest neighbours]
\label{th.rates.local.k.two.sample.proof}
    Let $\tau > 0, \k_{\P}, \k_{\Q} >0,$ and define
    \begin{align*}
        r_S := 2\beta/(2\beta + d) \wedge \gamma,\text{ and }r_T := s\wedge 2\beta/(2\beta + d).
    \end{align*}
    Let $\wh p$ and $\wh q$ be, respectively, the $\ell_{\P}$ and $\ell_{\Q}$-NN density estimators of $p$ and $q$ with
    \begin{align*}
        \ell := \ell_{\P} = \ell_{\Q} = \big\lceil V_\tau\log nm\big\rceil.
    \end{align*}
    Finally, consider $\wh f$ to be the local $(k_{\P}, k_{\Q})$-NN estimator with neighbour functions given by
    \begin{align*}
        k_{\P}(x) &:= n\wedge \Big\lceil \log(nm)^{d/(2\beta + d)}(n\wh p(x))^{2\beta/(2\beta + d)}\Big\rceil \vee \big\lceil \log (nm) \big\rceil\\
        k_{\Q}(x) &:= m\wedge \Big\lceil \log(nm)^{d/(2\beta + d)}(m\wh q(x))^{2\beta/(2\beta + d)}\Big\rceil \vee \big\lceil \log (nm) \big\rceil.
    \end{align*}
    Then, there exists $N \geq 2d$ and $M \geq 2d$ such that for all $n \geq N, m \geq M,$ it holds, with probability at least $1 - 3n^{1-\tau} - 3m^{1-\tau},$
    \begin{align*}
        \sup_{f \in \mH(L, \beta)}\PP\Big\{\mE_{\Q}(\wh f, f) \geq C(\tau)R(n,m, \gamma, s)\Big\} \leq 3(n^{1 - \tau} + m^{1 - \tau})
    \end{align*}
    with, if $(\gamma - r_\beta)(s - r_\beta) <0$ and $m \in [n, n^{\gamma/s}],$ then
    \begin{align*}
        R(n,m, \gamma, s) &= \tilde C_0\Big(\frac{\log(nm)}{n}\Big)^{\gamma a}\Big(\frac{\log(nm)}{m}\Big)^{s(1- a)}\mT_{\P}(\gamma)^{a}\mT_{\Q}(s)^{1-a},
    \end{align*}
    where $\tilde C_0$ is a constant independent of $n,m, \gamma, s, \tau$ and, if $(\gamma - r_\beta)(s - r_\beta) \geq 0$ or $m \notin [n, n^{\gamma/s}],$ then
    \begin{align*}
         R &= \ol C_0\Big[\mT_{\P}(r_S)\Big(\frac{\log(nm)}{n}\Big)^{r_S}\Big]\wedge \Big[\mT_{\Q}(r_T)\Big(\frac{\log(nm)}{m}\Big)^{r_T}\Big],
    \end{align*}
    where $\ol C_0$ is a constant independent of $n,m, \gamma, s, \tau$ and $C_p := C(\k_{\P}), \ C_q := C(\k_{\Q}),$ where
    \begin{align*}
        C(t) := S_\tau\vee c_2 \vee \frac{S_\tau}{t^{(2\beta + d)/(2\beta)}V_\tau} \vee \frac{(2S_\tau t)^{(2\beta + d)/d}}{V_\tau}\bigg(\frac{\theta}{c_1}\bigg)^{2\beta/d}.
    \end{align*}
    
\end{theorem}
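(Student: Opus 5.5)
We would prove Theorem \ref{th.rates.local.k.two.sample.proof} by reducing the two-sample estimator \eqref{eq.two.sample.local.knn.appendix} to the one-sample analysis of Theorem \ref{th.transfer.local.knn.proof}, applied once to each sample. The glue is the convex-combination inequality \eqref{eq.two.sample.risk.is.weighted.of.one.sample}.

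\textbf{Step 1: high-probability event.} We work on the intersection of the events $L_n^{\P}(\ell)\cap U_n^{\P}\cap V_n^{\P}$ and their analogues for $\Q$. We use $\log(nm)$ in place of $\log n$ in the neighbour counts and in $\ell$. The union bound, through Propositions \ref{prop.neighbours.distance.bound.zeta} and \ref{prop.neighbours.distance.lower.bound.zeta}, Corollary \ref{cor.variance.bound.nice.version} and Remark \ref{rem.bound.variable.k}, gives probability at least $1-3n^{1-\tau}-3m^{1-\tau}$. Replacing $\log n$ by $\log(nm)$ only increases the thresholds, so every lemma still applies, with constants changing by absolute factors.

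\textbf{Step 2: pointwise bound.} Writing $\wh f=\frac{k_{\P}}{k_{\P}+k_{\Q}}\wh f_{\P}+\frac{k_{\Q}}{k_{\P}+k_{\Q}}\wh f_{\Q}$ and applying Jensen gives \eqref{eq.two.sample.risk.is.weighted.of.one.sample}. We then combine Proposition \ref{prop.decomp.knn} with Lemmas \ref{lem.inclusions} and \ref{lem.density.ratio} for each sample.
\begin{itemize}
\item On $\Delta_{\P}\cap\Delta_{\Q}$ (both densities above $C\ell/n$, resp.\ $C\ell/m$), we have $k_{\P}\asymp(\log(nm))^{d/(2\beta+d)}(np)^{r_\beta}$, and similarly for $k_{\Q}$. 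Each one-sample error is then $\lesssim (\log(nm)/(np))^{r_\beta}$, resp.\ $(\log(nm)/(mq))^{r_\beta}$.
\item The weights satisfy $k_{\P}/(k_{\P}+k_{\Q})\asymp (np)^{r_\beta}/((np)^{r_\beta}+(mq)^{r_\beta})$. Hence the convex combination is $\lesssim (\log(nm))^{r_\beta}\big((np)^{r_\beta}+(mq)^{r_\beta}\big)^{-1}\asymp (\log(nm))^{r_\beta}(np\vee mq)^{-r_\beta}$. This is the heuristic bound of Section \ref{sec.3.3}.
\item On $\Delta_{\P}\cap\Delta_{\Q}^c$ we bound the $\Q$-estimator's error by a constant. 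Its weight is at most $k_{\Q}/k_{\P}\lesssim(\log(nm))^{1-r_\beta}\ldots$, which is not small enough. Instead, we argue directly that the weight times the error is at most the variance term $\log(nm)/(k_{\P}+k_{\Q})$ plus the bias of the $\Q$-neighbours. By Lemma \ref{lem.bounded.below.zeta.low.density} those neighbours may be far, so this bias can only be bounded by $2L$ times $k_{\Q}/(k_{\P}+k_{\Q})\lesssim \log(nm)/k_{\P}$. That is already of the order of the variance term.
\item On $\Delta_{\P}^c\cap\Delta_{\Q}^c$ the error is $O(1)$. Its $\Q$-mass is at most $\min\big(\Q\{p<C\ell/n\},\Q\{q<C\ell/m\}\big)$. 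The Markov bound $\Q\{p\le t\}\le t^{\gamma}\mT(\P,\Q,\gamma)$ shows this is dominated by the wedge rate.
\end{itemize}

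\textbf{Step 3: integration.} We integrate $\min\{(np)^{-r_\beta},(mq)^{-r_\beta}\}$ against $\Q$. For the wedge case, we bound the minimum by either term separately and reproduce the computation of Theorem \ref{th.transfer.local.knn.proof}: split $p\gtrless C\ell/n$, use $p^{-r_\beta}=p^{-r}p^{r-r_\beta}$ and $p^{r-r_\beta}\le (C\ell/n)^{r-r_\beta}$ when $r\le r_\beta$. This gives $\mT_{\P}(r_S)(\log(nm)/n)^{r_S}$, and symmetrically with $q,s$.

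For the accelerated case we use $a\wedge b\le a^{\lambda}b^{1-\lambda}$ with $\lambda=a_0:=(r_\beta-s)/(\gamma-s)\in(0,1)$ (supercriticality). We then apply Hölder with exponents $1/\lambda$ and $1/(1-\lambda)$ on the truncated region:
\[
\E_{\Q}\big[(np)^{-\lambda r_\beta}(mq)^{-(1-\lambda)r_\beta}\mathbb 1\big]\le n^{-\lambda r_\beta}m^{-(1-\lambda)r_\beta}\,\E_{\Q}\big[p^{-r_\beta}\mathbb 1_{p\ge C\ell/n}\big]^{\lambda}\,\E_{\Q}\big[q^{-r_\beta}\mathbb 1_{q\ge C\ell/m}\big]^{1-\lambda}.
\]
The truncated moments are bounded as in Step 3's wedge argument by $\mT_{\P}(\gamma)(\ell/n)^{\gamma-r_\beta}$ when $\gamma<r_\beta$, or by $\mT_{\P}(\gamma)$ when $\gamma\ge r_\beta$ (since $p\le D$), and similarly for $q$. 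Collecting exponents gives $n^{-\gamma a_0}m^{-s(1-a_0)}$ up to $\log(nm)$ factors, because $\lambda\gamma+(1-\lambda)s=r_\beta$.

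\textbf{Main obstacle.} The delicate part is the interplay with the truncation regions and the condition $m\in[n,n^{\gamma/s}]$. This condition is exactly what guarantees that the Hölder bound beats, or equals, both one-sample rates; otherwise we return to the wedge bound. The low-density $\Q$-mass terms must also stay below the accelerated rate, and checking this in each case ($\gamma<r_\beta<s$ versus $s<r_\beta<\gamma$) is the step we expect to require the most care. Finally, we take the minimum of the available bounds to state $R$.
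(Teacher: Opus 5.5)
Your overall architecture matches the paper: the Jensen split into $w_{\P}(\wh f_{\P}-f_*)^2+w_{\Q}(\wh f_{\Q}-f_*)^2$, the pointwise bound $\lesssim\log(nm)^{r_\beta}(np\vee mq)^{-r_\beta}$ where both densities are large, and the wedge integration with truncation at $C\ell/n$ and $C\ell/m$. The accelerated case of Step 3, however, fails as written.

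With interpolation weight $\lambda=a_0$ and H\"older exponents $1/\lambda,1/(1-\lambda)$, you lift both densities to the common power $r_\beta$. Truncation then recovers the deficit on the under-integrable side, but the surplus integrability on the other side is lost. For $s<r_\beta<\gamma$, your own moment bounds give
\[
n^{-a_0r_\beta}m^{-(1-a_0)r_\beta}\big(D^{\gamma-r_\beta}\mT_{\P}(\gamma)\big)^{a_0}\big(\mT_{\Q}(s)(C\ell/m)^{s-r_\beta}\big)^{1-a_0}.
\]
Up to constants and powers of $\log(nm)$ this is $n^{-a_0r_\beta}m^{-(1-a_0)s}$, which misses the claimed rate by a factor $n^{a_0(\gamma-r_\beta)}$. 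For $\gamma<r_\beta<s$ you get $n^{-a_0\gamma}m^{-(1-a_0)r_\beta}$ instead. More fundamentally, for every $\lambda$ your scheme yields $(n^{-r_\beta})^{\lambda}(m^{-s})^{1-\lambda}\ge n^{-r_\beta}\wedge m^{-s}$ (resp.\ $(n^{-\gamma})^{\lambda}(m^{-r_\beta})^{1-\lambda}\ge n^{-\gamma}\wedge m^{-r_\beta}$). So it can never beat the wedge rate, and the identity $a_0\gamma+(1-a_0)s=r_\beta$ does not make the exponents collect as you claim.

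The missing step is to interpolate with weight $\lambda_\star:=\gamma a_0/r_\beta\in[0,1]$ rather than $a_0$. That is, use $\min\{(np)^{-r_\beta},(mq)^{-r_\beta}\}\le(np)^{-\gamma a_0}(mq)^{-s(1-a_0)}$, which is legitimate precisely because these exponents are nonnegative and sum to $r_\beta$; this is where the identity belongs. H\"older with conjugates $1/a_0$ and $1/(1-a_0)$ then puts each factor exactly at its own index, $\E_{\Q}[p^{-\gamma a_0}q^{-s(1-a_0)}]\le\mT_{\P}(\gamma)^{a_0}\mT_{\Q}(s)^{1-a_0}$, with no truncation. The prefactor is directly $n^{-\gamma a_0}m^{-s(1-a_0)}$. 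This bound holds for all $(n,m)$ in the supercritical case; the condition $m\in[n,n^{\gamma/s}]$ only decides whether it beats the wedge bound. This is the paper's argument on $\Delta_{\P}\cap\Delta_{\Q}$.

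The same device also handles what you call the main obstacle:
\begin{itemize}
\item On $\Delta_{\P}\cap\Delta_{\Q}^c$ one has $np\gtrsim mq$, so $(np)^{-r_\beta}\lesssim(np)^{-\gamma a_0}(mq)^{-s(1-a_0)}$; the region $\Delta_{\P}^c\cap\Delta_{\Q}$ is symmetric.
\item On $\Delta_{\P}^c\cap\Delta_{\Q}^c$ one has $1\le(C\ell/(np))^{\gamma a_0}(C\ell/(mq))^{s(1-a_0)}$.
\end{itemize}
Hence the same H\"older bound controls every region. The paper writes these steps as factorisations such as $q=(q/p^{\gamma})^{a_0}(q^{1-s})^{1-a_0}p^{\gamma a_0}q^{s(1-a_0)}$. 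Your truncated version would not even apply on $\Delta_{\P}\cap\Delta_{\Q}^c$, where $q<C\ell/m$ and $\E_{\Q}[q^{-r_\beta}]$ may be infinite when $s<r_\beta$.

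A smaller point: on that region your third bullet silently uses $k_{\Q}\lesssim\log(nm)$. This should be justified by $\wh q(x)=\ell/(mR_\ell(x)^d)\lesssim\ell/m$, i.e.\ by Lemma \ref{lem.bounded.below.zeta.low.density} combined with $R_\ell\ge\zeta_{h_-}$ on $L_m^{\Q}$. The distance of the $\Q$-neighbours entering the bias does not give it.
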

\begin{proof}
    Let 
    \begin{align*}
        C(t) := S_\tau\vee c_2 \vee \frac{S_\tau}{t^{(2\beta + d)/(2\beta)}V_\tau} \vee \frac{(2S_\tau t)^{(2\beta + d)/d}}{V_\tau}\bigg(\frac{\theta}{c_1}\bigg)^{2\beta/d}.
    \end{align*}
    and define $C_p := C(\k_{\P}), \ C_q := C(\k_{\Q}).$ For $R \in \{\P, \Q\},$ Define the event
    \begin{align*}
        E_n^{R_{\sX}}(\ell, \wh f_R) := L_n^{R_{\sX}} \cap \U_n^{R_{\sX}}(\ell) \cap V_n^{R_{\sX}}(\wh f_R).
    \end{align*}
    We work on the event
    \begin{align}
        \label{eq.event.two.sample.local}
        E_n^{\P_{\sX}}(\ell_{\P}, \wh f_{\P}) \cap E_m^{\Q\mathstrut_{\!\sX}}(\ell_{\Q}, \wh f_{\Q}).
    \end{align}
   Let $X \sim \Q\mathstrut_{\!\sX}$ be independent of the samples. We start by decomposing the risk as follows.
    \begin{align*}
        \big\|\wh f - f_*\big\|_{L^2(\Q\mathstrut_{\!\sX})}^2 &= \E_X\bigg[\bigg(\frac{1}{k_{\P}(X) + k_{\Q}(X)}\bigg(\sum_{i=1}^{k_{\P}(X)}\big(Y_i(X) - f_*(X)\big) + \sum_{j=1}^{k_{\Q}(X)} \big(Y_j'(X) - f_*(X)\big)\bigg)\bigg)^2\bigg]\\
        &= \E\bigg[\bigg(\frac{k_{\P}(X)}{k_{\P}(X) + k_{\Q}(X)}\big(\wh f_{\P}(X) - f_*(X)\big) + \frac{k_{\Q}}{k_{\P}(X) + k_{\Q}(X)}\big(\wh f_{\Q}(X) - f_*(X)\big)\bigg)^2\bigg],
    \end{align*}
    where $\wh f_{\P}(x) := k_{\P}(x)^{-1}\sum_{i=1}^{k_{\P}(x)} Y_i(x)$ and $\wh f_{\Q}(x) := k_{\Q}(x)^{-1}\sum_{j=1}^{k_{\Q}(x)} Y_j'(x).$ Denote by $w_{\P}(x) := k_{\P}(x)/(k_{\P}(x) + k_{\Q}(x))$ and by $w_{\Q}(x) := k_{\Q}(x)/(k_{\P}(x) + k_{\Q}(x)).$ A further application of Jensen's inequality yields
    \begin{align}
        \label{eq.decomposition.expectation.two.sample}
        \big\|\wh f - f_*\big\|_{L^2(\Q\mathstrut_{\!\sX})}^2 &\leq \E_X\Big[w_{\P}(X)\big(\wh f_{\P}(X) - f_*(X)\big)^2 + w_{\Q}(X) \big(\wh f_{\Q}(X) - f_*(X)\big)^2\Big].
    \end{align}
    Let $\Delta_{\P} := \Delta_n^{\P_{\sX}}(C_p, \ell_{\P})$ and $\Delta_{\Q} := \Delta_m^{\Q\mathstrut_{\!\sX}}(C_q, \ell_{\Q}).$ We apply Proposition \ref{prop.decomp.knn} twice to obtain bounds on $(\wh f_{\P}(X) - f_*(X))^2$ and $(\wh f_{\Q}(X) - f_*(X))^2.$ On the event \eqref{eq.event.two.sample.local}, we can decompose the error inside the expectation in \eqref{eq.decomposition.expectation.two.sample} in four different
    ways, depending on the region. The result is as follows
    \begin{align}
        \label{eq.first.bv}
        \begin{split}
        \big(\wh f_{\P}(x) - f_*(x)\big)^2 &\leq \begin{dcases}
            b^2_{\P}(x) + V_{\P}(x) & \text{ if } x \in \Delta_{\P}\\
            8L^2 + V_{\P}(x) &\text{ if } x \in \Delta_{\P}^c
        \end{dcases}\\
        \big(\wh f_{\Q}(x) - f_*(x)\big)^2 &\leq \begin{dcases}
            b^2_{\Q}(x) + V_{\Q}(x) & \text{ if } x \in \Delta_{\Q}\\
            8L^2 + V_{\Q}(x) &\text{ if } x \in \Delta_{\Q}^c,
        \end{dcases}
        \end{split}
    \end{align}
    where
    \begin{align*}
        b_{\P}^2(x) &:= 8L^2\bigg(\frac{S_\tau k_{\P}(x)}{np(x)}\bigg)^{2\beta/d}, \ V_{\P}(x) := 2C_\tau \frac{\log n}{k_{\P}(x)},\\
        b_{\Q}^2(x) &:= 8L^2\bigg(\frac{S_\tau k_{\Q}(x)}{mq(x)}\bigg)^{2\beta/d}, \ V_{\Q}(x) := 2C_\tau \frac{\log m}{k_{\Q}(x)},
    \end{align*}
    In order to derive bounds on the excess risk, we further subdivide $\RR^d$ into four regions. Let 
    \begin{align*}
        \Delta := \Delta_{\P}\cap \Delta_{\Q}, \ \Gamma_{\P} := \Delta_{\P}\cap \Delta_{\Q}^c,\ \Gamma_{\Q} := \Delta_{\P}^c\cap \Delta_{\Q},\text{ and } \Theta := \Delta_{\P}^c\cap \Delta_{\Q}^c.
    \end{align*}
    We now bound the pointwise risk of $\wh f$ on each of these regions.\\
    \noindent\textbf{Region $\Delta$.} For $x \in \Delta,$ the total pointwise risk is
    \begin{align}
        \label{eq.pointwise.loss.bulk}
        \big(\wh f(x) - f_*(x)\big)^2 &\leq w_{\P}(x)\big(b_{\P}(x)
        ^2 + V_{\P}(x)\big) + w_{\Q}(x)\big(b_{\Q}(x)
        ^2 + V_{\Q}(x)\big).
    \end{align}
    We now denote $r_\beta := 2\beta/(2\beta + d), \ \mT_{\P}(u) := \int q(x)/p(x)^{u}\, dx$ and $\mT_{\Q}(v) := \int q(x)^{1 -v}\, dx.$ From the definitions of $k_{\P}, k_{\Q},$ Lemma \ref{lem.inclusions} and Lemma \ref{lem.density.ratio} $(ii)$, we obtain
    \begin{align}
        \label{eq.two.sample.local.variance.bound}
        w_{\P}(x)V_{\P}(x) + w_{\Q}(x)V_{\Q}(x) &= 2C_\tau\Big(\frac{\log n}{k_{\P}(x) + k_{\Q}(x)} + \frac{\log m}{k_{\P}(x) + k_{\Q}(x)}\Big)\nonumber\\
        &= 2C_\tau\frac{\log (nm)}{k_{\P}(x) + k_{\Q}(x)}\\
        & = 2C_\tau \Big(\frac{k_{\P}(x)}{\log (nm)} + \frac{k_{\Q}(x)}{\log (nm)}\Big)^{-1}\nonumber\\
        &= 2C_\tau \Bigg(\k_{\P}\frac{\log(nm)^{1 - r_\beta}(n\wh p(x))^{r_\beta}}{\log(nm)} + \k_{\Q}\frac{\log(nm)^{1 - r_\beta}(m\wh q(x))^{r_\beta}}{\log(nm)}\Bigg)^{-1}\nonumber\\
        &\leq 2C_\tau \Bigg(\k_{\P}\bigg(\frac{np(x)}{S_\tau\log(nm)}\bigg)^{r_\beta} + \k_{\Q}\bigg(\frac{m q(x)}{S_\tau \log(nm)}\bigg)^{r_\beta}\Bigg)^{-1}\nonumber\\
        &= 2C_\tau S_\tau^{r_\beta} \Bigg(\k_{\P}\bigg(\frac{np(x)}{\log(nm)}\bigg)^{r_\beta} + \k_{\Q}\bigg(\frac{m q(x)}{\log(nm)}\bigg)^{r_\beta}\Bigg)^{-1}
    \end{align}
    We now bound the remaining terms in \eqref{eq.pointwise.loss.bulk}. From the definition of $k_{\P},$ and Lemma \ref{lem.density.ratio}, we get
    \begin{align*}
        k_{\P}(x)b_{\P}^2(x) = k_{\P}(x)\bigg(\frac{S_\tau k_{\P}(x)}{np(x)}\bigg)^{2\beta/d} &= S_\tau^{2\beta/d}\frac{k_{\P}(x)^{(2\beta + d)/d}}{(np(x))^{2\beta/d}}\\
        &=S_\tau^{2\beta/d}\k_{\P}^{(2\beta + d)/d}\log(nm)\bigg(\frac{\wh p(x)}{p(x)}\bigg)^{2\beta/d}\\
        &\leq \k_{\P}^{(2\beta + d)/d}\bigg(\frac{\theta S_\tau}{c_1}\bigg)^{2\beta/d}\log(nm) := K_{\P}\log(nm).
    \end{align*}
    The same analysis for the second term in the expression of $b^2(x)$ yields
    \begin{align*}
        k_{\Q}(x)b_{\Q}^2(x) = k_{\Q}(x)\bigg(\frac{S_\tau k_{\Q}(x)}{mq(x)}\bigg)^{2\beta/d} \leq \k_{\Q}^{(2\beta + d)/d}\bigg(\frac{\theta S_\tau}{c_1}\bigg)^{2\beta/d}\log(nm) =: K_{\Q}\log(nm).
    \end{align*}
    Hence,
    \begin{align*}
        8L^2(w_{\P}(x)b^2_{\P}(x) + w_{\Q}(x)b^2_{\Q}(x)) &\leq 8L^2(K_{\P} + K_{\Q})\frac{\log(nm)}{k_{\P}(x) + k_{\Q}(x)} =: \ol K\frac{\log(nm)}{k_{\P}(x) + k_{\Q}(x)},
    \end{align*}
    where $\ol K := K_{\P} + K_{\Q}.$ The latter quantity can be bounded in the exact same way as in \eqref{eq.two.sample.local.variance.bound}, which shows that, for $x \in \Delta,$
    \begin{align*}
        \big(\wh f(x) - f_*(x)\big)^2 &\leq (2C_\tau + 8L^2\ol K)S_\tau^{r_\beta} \Bigg(\k_{\P}\bigg(\frac{np(x)}{\log(nm)}\bigg)^{r_\beta} + \k_{\Q}\bigg(\frac{m q(x)}{\log(nm)}\bigg)^{r_\beta}\Bigg)^{-1}\\
        &:= C_{\Delta}\Bigg(\k_{\P}\bigg(\frac{np(x)}{\log(nm)}\bigg)^{r_\beta} + \k_{\Q}\bigg(\frac{m q(x)}{\log(nm)}\bigg)^{r_\beta}\Bigg)^{-1}\\
        &\leq C_{\Delta}\bigg(\k_{\P}^{-1}\Big(\frac{\log(nm)}{np(x)}\Big)^{r_\beta}\wedge\k_{\Q}^{-1}\Big(\frac{\log(nm)}{mq(x)}\Big)^{r_\beta}\bigg)
    \end{align*}
    \noindent\textbf{Regions $\Gamma_{\P}$ and $\Gamma_{\Q}.$} For $x \in \Gamma_{\P},$ we have
    \begin{align*}
        \big(\wh f(x) - f_*(x)\big)^2 &\leq w_{\P}\big(b_{\P}^2(x) + V_{\P}(x)\big)^2 + w_{\Q}(x)\big(8L^2 + V_{\Q}(x)\big).
    \end{align*}
    Since $x \in \Delta_{\P},$ the bound on $w_{\P}(x)(\wh f_{\P}(x) - f_*(x))^2$ from the previous case still holds, and we obtain
    \begin{align*}
        \big(\wh f(x) - f_*(x)\big)^2 &\leq \frac{(K_{\P} + 2C_\tau) \log(nm) + 8L^2k_{\Q}(x)}{k_{\P}(x) + k_{\Q}(x)}.
    \end{align*}
    Recall that $x \in \Delta_{\Q}^c,$ meaning that $q(x) \leq C_q\ell_{\Q}/m,$ hence, by virtue of Lemma \ref{lem.bounded.below.zeta.low.density}, it holds that
    \begin{align}
        \label{eq.ub.wh.q.low.density}
        \wh q(x) = \frac{\ell_{\Q}}{mR_{\ell_{\Q}, \Q}(x)^d} \leq \frac{\ell_{\Q}}{m\zeta_{h_-, \Q}(x)^d} \leq \frac{a\theta}{c_1}\frac{\ell_{\Q}}{m}.
    \end{align}
    Hence, recalling that $\ell_{\P} = \ell_{\Q} = \lceil V_\tau \log(nm)\rceil,$
    \begin{align*}
        \big(\wh f(x) - f_*(x)\big)^2 &\leq \frac{(K_{\P} + 2C_\tau) \log(nm) + 16L^2\k_{\Q}(a\theta/c_1)^{r_\beta}\log(nm)^{1-r_\beta}\ell_{\Q}^{r_\beta}}{k_{\P}(x) + k_{\Q}(x)}\\
        &\leq (K_{\P} + 2C_\tau + 16L^2V_\tau^{r_\beta}\k_{\Q}(a\theta/c_1)^{r_\beta}) \frac{\log(nm)}{k_{\P}(x) + k_{\Q}(x)}\\
        &\leq (K_{\P} + 2C_\tau + 16L^2V_\tau^{r_\beta}\k_{\Q}(a\theta/c_1)^{r_\beta})\frac{\log(nm)}{k_{\P}(x)}\\
        &\leq \frac{K_{\P} + 2C_\tau + 16L^2V_\tau^{r_\beta}\k_{\Q}(a\theta/c_1)^{r_\beta}}{\k_{\P}}\bigg(\frac{\log(nm)}{n\wh p(x)}\bigg)^{r_\beta}\\
        &\leq \frac{(K_{\P} + 2C_\tau + 16L^2V_\tau^{r_\beta}\k_{\Q}(a\theta/c_1)^{r_\beta})S_\tau^{r_\beta}}{\k_{\P}}\bigg(\frac{\log(nm)}{n p(x)}\bigg)^{r_\beta}=: C_{\Gamma_{\P}}\bigg(\frac{\log(nm)}{np(x)}\bigg)^{r_\beta}.
    \end{align*}
    The analysis on the region $\Gamma_{\Q}$ is symmetric and we obtain, for $x \in \Gamma_{\Q},$
    \begin{align*}
        \big(\wh f(x) - f_*(x)\big)^2 &\leq C_{\Gamma_{\Q}}\bigg(\frac{\log(nm)}{mq(x)}\bigg)^{r_\beta}, \ \text{ with } C_{\Gamma_{\Q}} := \frac{(K_{\Q} + 2C_\tau + 16L^2V_\tau^{r_\beta}\k_{\P}(a\theta/c_1)^{r_\beta})S_\tau^{r_\beta}}{\k_{\Q}}.
    \end{align*}
    \noindent\textbf{Region $\Theta.$} For $x \in \Theta,$ we use the fact that, by definition, $k_{\P}(x)\wedge k_{\Q}(x) \geq \log(nm)$ to obtain
    \begin{align*}
        \big(\wh f(x) - f_*(x)\big)^2 &\leq 8L^2 + w_{\P}(x) V_{\P}(x) + w_{\Q}(x)V_{\Q}(x)\\
        &\leq 8L^2 + 2C_\tau \frac{\log (nm)}{k_{\P}(x) + k_{\Q}(x)}\\
        &\leq 8L^2 + C_\tau =: C_{\theta}
    \end{align*}
    This finishes the pointwise bounds on our partition of $\RR^d.$ It remains to integrate the latter against $\Q\mathstrut_{\!\sX}.$ We will proceed to integrate in each region. For each region, we will distinguish cases between the \textbf{wedge regime} and the \textbf{accelerated regime}. We define
    \begin{align*}
        I_{\Delta} &:= \int_{\Delta}\bigg(\k_{\P}^{-1}\Big(\frac{1}{np(x)}\Big)^{r_\beta}\wedge\k_{\Q}^{-1}\Big(\frac{1}{mq(x)}\Big)^{r_\beta}\bigg)\, d\Q\mathstrut_{\!\sX}(x)\\
        I_{\Gamma_{\P}} &:= \int_{\Gamma_{\P}}\bigg(\frac{1}{np(x)}\bigg)^{r_\beta}\, d\Q\mathstrut_{\!\sX}\\
        I_{\Gamma_{\Q}} &:= \int_{\Gamma_{\Q}}\bigg(\frac{1}{mq(x)}\bigg)^{r_\beta}\, d\Q\mathstrut_{\!\sX}(x)\\
        I_{\Theta} &:= \int_{\Theta}\, d\Q\mathstrut_{\!\sX}(x).
    \end{align*}
    With these notations, the final bound on the excess risk reads
    \begin{align*}
        \big\|\wh f - f_*\big\|_{L^2(\Q\mathstrut_{\!\sX})}^2 &\leq \log(nm)^{r_\beta}\big(C_{\Delta}I_{\Delta} + C_{\Gamma_{\P}}I_{\Gamma_{\P}} + C_{\Gamma_{\Q}}I_{\Gamma_{\Q}}) + C_{\Theta}I_{\Theta}.
    \end{align*}
    
    \noindent\textbf{Bound on $\Delta.$}
    We write
    \begin{align*}
        I_{\Delta} &\leq \inf_{\theta \in [0, 1]}\int_{\Delta}(\k_{\P}(np(x))^{r_\beta})^{-\theta}(\k_{\Q}(mq(x))^{r_\beta})^{1 - \theta}\, d\Q\mathstrut_{\!\sX}(x)\\
        &\leq \inf_{\theta \in [0, 1]}(\k_{\P}n^{r_\beta})^{-\theta}(\k_{\Q}m^{r_\beta})^{-(1 - \theta)}\int_{\Delta}p(x)^{-r_\beta \theta}q(x)^{-r_\beta(1 - \theta)}\, d\Q\mathstrut_{\!\sX}\\
        &\leq \inf_{\theta \in [0, 1]}(\k_{\P}n^{r_\beta})^{-\theta}(\k_{\Q}m^{r_\beta})^{-(1 - \theta)}\E_{X \sim \Q\mathstrut_{\!\sX}}\big[p(X)^{-r_\beta \theta}q(X)^{-r_\beta(1 - \theta)}\big]
    \end{align*}
    In the \textbf{wedge regime,} it is sufficient to use the facts that $r_S - r_\beta \leq 0, \ r_M - r_\beta \leq 0$ and that, on $\Delta, \ p(x) \geq C_p\ell_{\P}/n$ and $q(x) \geq C_q \ell_{\Q}/m$ and 
    \begin{align*}
        I_{\Delta} &\leq \bigg[\frac{1}{\k_{\P}n^{r_\beta}}\int_{\Delta}\frac{q(x)}{p(x)^{r_\beta}}\, dx\bigg]\wedge \bigg[\frac{1}{\k_{\Q}m^{r_\beta}}\int_{\Delta}q(x)^{1 - r_\beta}\, dx\bigg]\\
        &\leq \bigg[\frac{1}{\k_{\P}n^{r_\beta}}\int_{\Delta}\frac{q(x)}{p(x)^{r_S}}p(x)^{r_S - r_\beta}\, dx\bigg]\wedge \bigg[\frac{1}{\k_{\Q}m^{r_\beta}}\int_{\Delta}q(x)^{1 - r_\beta}\, dx\bigg]\\
        &\leq \bigg[\frac{1}{\k_{\P}n^{r_S}}(C_p\ell_{\P})^{r_S - r_\beta}\mT_{\P}(r_S)\bigg]\wedge\bigg[\frac{1}{\k_{\Q}m^{r_T}}(C_q\ell_{\Q})^{r_T - r_\beta}\mT_{\Q}(r_T)\bigg].
    \end{align*}
    In the \textbf{acceleration regime}, we can instead take $\theta = \gamma(s - r)/(r_\beta(s - \gamma))$ to obtain
    \begin{align*}
        I_{\Delta} &\leq (\k_{\P}n^{r_\beta})^{-\theta}(\k_{\Q}m^{r_\beta})^{-(1 - \theta)}\E_{X \sim \Q\mathstrut_{\!\sX}}\big[p(X)^{-r_\beta \theta}q(X)^{-r_\beta(1 - \theta)}\big]\\
        &\leq \k_{\P}^{-\theta}\k_{\Q}^{-(1 - \theta)}n^{-\tfrac{\gamma(r_\beta - s)}{\gamma - s}}m^{-\tfrac{s(\gamma - r)}{\gamma - s}}\E_{X \sim \Q\mathstrut_{\!\sX}}\big[p(X)^{-r_\beta \theta}q(X)^{-r_\beta(1 - \theta)}\big].
    \end{align*}
    It remains to verify that the expectation above is finite. We apply H\"older's inequality with the conjugates $a := \gamma/(\theta r_\beta),$ and $b := s/((1 - \theta)r_\beta),$ which leads to
    \begin{align*}
        \E_{X \sim \Q\mathstrut_{\!\sX}}\big[p(X)^{-r_\beta \theta}q(X)^{-r_\beta(1 - \theta)}\big] &\leq \Big(\E_{X \sim \Q\mathstrut_{\!\sX}}\big[p(X)^{-r_\beta \theta a}\big]\Big)^{1/a}\Big(\E_{X \sim \Q\mathstrut_{\!\sX}}\big[q(X)^{-r_\beta(1 - \theta)b}\big]\Big)^{1/b},
    \end{align*}
    and, since $r_\beta\theta a = \gamma$ and $r_\beta(1 - \theta)b = s,$ we conclude that
    \begin{align*}
        I_{\Delta} &\leq \k_{\P}^{-\theta}\k_{\Q}^{-(1 - \theta)}n^{-\tfrac{\gamma(r_\beta - s)}{\gamma - s}}m^{-\tfrac{s(\gamma - r)}{\gamma - s}}\mT_{\P}(\gamma)^{\theta r_\beta/\gamma} \mT_{\Q}(s)^{(1 - \theta)r_\beta/s}.
    \end{align*}
    
    \noindent\textbf{Bound on $\Gamma_{\P}.$} On $\Gamma_{\P},$ by definition, we have
    \begin{align*}
        np(x) \geq C_p\ell_{\P}.
    \end{align*}
    Therefore, in the \textbf{wedge regime}, it holds that
    \begin{align*}
        I_{\Gamma_{\P}} &= \int_{\Gamma_{\P}}(np(x))^{-r_\beta}\, d\Q\mathstrut_{\!\sX} \leq \int_{\Gamma_{\P}}\big[(C_p\ell_{\P})^{-r_\beta} \wedge (np(x))^{-r_\beta}\big]\, d\Q\mathstrut_{\!\sX}.
    \end{align*}
    Since for $x \in \Gamma_{\P}, \ q(x) \leq C_q\ell_{\Q}/m,$ and $p(x) \geq C_p\ell_{\P}/n,$ it holds that 
    \begin{align*}
        I_{\Gamma_{\P}} &\leq \bigg[\frac{(C_p\ell_{\P})^{r_S - r_\beta}}{n^{r_S}}\int_{\Gamma_{\P}}p(x)^{-r_S}\, d\Q\mathstrut_{\!\sX}(x)\bigg] \wedge \bigg[(C_p\ell_{\P})^{-r_\beta}\int_{\Gamma_{\P}}\frac{q(x)}{q(x)^{r_T}}q(x)^{r_T}\, dx\bigg]\\
        &\leq \bigg[\frac{(C_p\ell_{\P})^{r_S - r_\beta}}{n^{r_S}}\mT_{\P}(r_S)\bigg] \wedge \bigg[(C_p\ell_{\P})^{-r_\beta}\int_{\Gamma_{\P}}\frac{q(x)}{q(x)^{r_T}}q(x)^{r_T}\, dx\bigg]\\
        &\leq \bigg[\frac{(C_p\ell_{\P})^{r_S - r_\beta}}{n^{r_S}}\mT_{\P}(r_S)\bigg] \wedge \bigg[(C_p\ell_{\P})^{-r_\beta}\Big(\frac{C_q\ell_{\Q}}{m}\Big)^{r_T}\mT_{\Q}(r_T)\bigg],
    \end{align*}
    which gives the bound in the \textbf{wedge regime}. Next, in the \textbf{accelerated regime} where either $s < r_\beta < \gamma,$ or $\gamma < r_\beta < s,$ we instead write
    \begin{align*}
        I_{\Gamma_{\P}} &= n^{-r_\beta}\int_{\Gamma_{\P}}\frac{q(x)}{p(x)^{r_\beta}}\, dx \\
        &= n^{-r_\beta}\int_{\Gamma_{\P}}\Big(\frac{q(x)}{p(x)^{\gamma}}\Big)^aq(x)^{(1 - s)(1 - a)}p(x)^{-(r_\beta - \gamma a)}q(x)^{s(1 - a)}\, dx,
    \end{align*}
    with 
    \begin{align}
        \label{eq.a}
        a := \tfrac{r_\beta - s}{\gamma - s} \in (0, 1).    
    \end{align}
    We check that
    \begin{align*}
        r_\beta - \gamma a = \frac{r_\beta(\gamma - s) - \gamma(r_\beta -s)}{\gamma - s} = s\frac{\gamma - r_\beta}{\gamma - s}  = s(1 - a) > 0.
    \end{align*}
    Hence, using the definition of $\Gamma_{\P}$ before applying H\"older's inequality with conjugates $1/a$ and $1/(1 - a)$, we get
    \begin{align*}
        I_{\Gamma_{\P}} &\leq (\ell_{\P}C_{\P})^{\gamma a - r_\beta}(\ell_{\Q}C_q)^{s(1 - a)}n^{-\gamma a}m^{-s(1 - a)}\int_{\Gamma_{\P}}\Big(\frac{q(x)}{p(x)^{\gamma}}\Big)^aq(x)^{(1 - s)(1 - a)}\, dx\\
        &\leq (\ell_{\P}C_{p})^{\gamma a - r_\beta}(\ell_{\Q}C_q)^{s(1 - a)}n^{-\gamma a}m^{-s(1 - a)}\mT_{\P}(\gamma)^{a}\mT_{\Q}(s)^{1-a},
    \end{align*}
    which gives us the bound in the accelerated regime.\\
    
    \noindent\textbf{Bound on $\Gamma_{\Q}$.} This case is symmetric to the bound on $\Gamma_{\P}.$ Following the exact same steps and exchanging the roles of $\P_{\sX}$ and $\Q\mathstrut_{\!\sX}$ leads to the following. In the \textbf{wedge regime}, we obtain the bound
    \begin{align*}
        I_{\Gamma_{\Q}} &\leq \bigg[\frac{(C_q\ell_{\Q})^{r_T - r_\beta}}{m^{r_T}}\mT_{\Q}(r_T)\bigg] \wedge \bigg[(C_q\ell_{\Q})^{-r_\beta}\Big(\frac{C_p\ell_{\P}}{n}\Big)^{r_S}\mT_{\P}(r_S)\bigg],
    \end{align*}
    and, in the \textbf{accelerated regime}, we instead rewrite
    \begin{align*}
        I_{\Gamma_{\Q}} &= m^{-r_\beta}\int_{\Gamma_{\Q}}\frac{q(x)}{q(x)^{r\beta}}\, dx\\
        &= m^{-r_\beta}\int_{\Gamma_{\Q}}\Big(\frac{q(x)}{p(x)^\gamma}\Big)^aq(x)^{(1 - s)(1 - a)}p(x)^{\gamma a}q(x)^{-(r_\beta - s(1 - a))}\, dx.
    \end{align*}
    Then, we already know that $r_\beta - s(1 - a) = \gamma a >0,$ therefore, on $\Gamma_{\Q},$
    \begin{align*}
    q(x)^{-(r_\beta - s(1 - a))} \leq (m/(C_q\ell_{\Q}))^{r_\beta - s(1 - a)},
    \end{align*}
    and this enables to conclude
    \begin{align*}
        I_{\Gamma_{\Q}} &\leq (C_q\ell_{\Q})^{s(1 - a) - r_\beta}(C_p\ell_{\P})^{\gamma a}n^{-\gamma a}m^{-s(1 - a)}\mT_{\P}(\gamma)^{a}\mT_{\Q}(s)^{1 - a}.
    \end{align*}
    
    \noindent\textbf{Bound on $\Theta.$} We aim to bound
    \begin{align*}
        I_{\Theta} = \int_{\Theta}q(x)\, dx.
    \end{align*}
    We again distinguish the two regimes. In the \textbf{wedge regime}, using the fact that if $\gamma > 0$ then $\P_{\sX} \ll \Q\mathstrut_{\!\sX},$ it holds that 
    \begin{align*}
        I_{\Theta} = \int_{\Theta}q(x)\, dx &= \int_{\Theta}\Big(p(x)^{r_S}\frac{q(x)}{p(x)^{r_S}}\Big)\wedge \Big(q(x)^{r_T}\frac{q(x)}{q(x)^{r_T}}\Big)\, dx\\
        &\leq \bigg(\int_{\Theta}p(x)^{r_S}\frac{q(x)}{p(x)^{r_S}}\, dx\bigg)\wedge\bigg(q(x)^{r_T}\frac{q(x)}{q(x)^{r_T}}\, dx\bigg)\\
        &\leq \Big[\Big(\frac{C_p\ell_{\P}}{n}\Big)^{r_S}\mT_{\P}(r_S)\Big]\wedge \Big[\Big(\frac{C_q\ell_{\Q}}{m}\Big)^{r_T}\mT_{\Q}(r_T)\Big],
    \end{align*}
    where we also used that by definition, if $x \in \Theta,$ then $p(x) < C_p\ell_{\P}/n$ and $q(x) < C_q\ell_{\Q}/m.$ In the \textbf{accelerated regime}, we instead proceed as follows
    \begin{align*}
        I_{\Theta} = \int_{\Theta}q(x)\, dx &\leq \int_{\Theta} \Big(\frac{q(x)}{p(x)^{\gamma}}\Big)^a\Big(\frac{q(x)}{q(x)^s}\Big)^{1 - a}p(x)^{\gamma a}q(x)^{s(1 - a)}\, dx\\
        &\leq (C_p\ell_{\P})^{\gamma a}(C_q\ell_{\Q})^{s(1 - a)}n^{-\gamma a}m^{-s(1 - a)}\mT_{\P}(\gamma)^{a}\mT_{\Q}(s)^{1-a},
    \end{align*}
    while choosing $a$ as in \eqref{eq.a}.\\
    
    \noindent\textbf{Conclusion.} It remains to gather the terms to obtain, in the \textbf{wedge regime},
    \begin{align*}
        \big\|\wh f - f_*\big\|^2_{L^2(\Q\mathstrut_{\!\sX})} &\leq C_{\Delta}\log(nm)^{r_\beta}\bigg[\frac{1}{\k_{\P}n^{r_S}}(C_p\ell_{\P})^{r_S - r_\beta}\mT_{\P}(r_S)\bigg]\wedge\bigg[\frac{1}{\k_{\Q}m^{r_T}}(C_q\ell_{\Q})^{r_T - r_\beta}\mT_{\Q}(r_T)\bigg]\\
        &\qquad + C_{\Gamma_{\P}}\log(nm)^{r_\beta}\bigg[\frac{(C_p\ell_{\P})^{r_S - r_\beta}}{n^{r_S}}\mT_{\P}(r_S)\bigg] \wedge \bigg[(C_p\ell_{\P})^{-r_\beta}\Big(\frac{C_q\ell_{\Q}}{m}\Big)^{r_T}\mT_{\Q}(r_T)\bigg]\\
        &\qquad + C_{\Gamma_{\Q}}\log(nm)^{r_\beta}\bigg[(C_q\ell_{\Q})^{-r_\beta}\Big(\frac{C_p\ell_{\P}}{n}\Big)^{r_S}\mT_{\P}(r_S)\bigg]\wedge \bigg[\frac{(C_q\ell_{\Q})^{r_T - r_\beta}}{m^{r_T}}\mT_{\Q}(r_T)\bigg]\\
        &\qquad + C_{\Theta}\Big[\Big(\frac{C_p\ell_{\P}}{n}\Big)^{r_S}\mT_{\P}(r_S)\Big]\wedge \Big[\Big(\frac{C_q\ell_{\Q}}{m}\Big)^{r_T}\mT_{\Q}(r_T)\Big].
    \end{align*}
    Taking $\k_{\P} = \k_{\Q} = 1$ and recalling that $\ell_{\P} = \ell_{\Q} = \lceil V_\tau\log(nm)\rceil$ leads to first obtain $C_p = C_q =: C_0$ and
    \begin{align*}
        \big\|\wh f - f_*\big\|^2_{L^2(\Q\mathstrut_{\!\sX})} &\leq C_{\Delta}\bigg[(C_0V_\tau)^{r_S - r_\beta}\mT_{\P}(r_S)\Big(\frac{\log(nm)}{n}\Big)^{r_S}\bigg]\wedge\bigg[(C_0V_\tau)^{r_T - r_\beta}\mT_{\Q}(r_T)\Big(\frac{\log nm}{m}\Big)^{r_T}\bigg]\\
        &\qquad + C_{\Gamma_{\P}}\bigg[(C_0V_\tau)^{r_S - r_\beta}\mT_{\P}(r_S)\Big(\frac{\log (nm)}{n}\Big)^{r_S}\bigg] \wedge \bigg[(C_0V_\tau)^{r_T-r_\beta}\mT_{\Q}(r_T)\Big(\frac{\log(nm)}{m}\Big)^{r_T}\bigg]\\
        &\qquad + C_{\Gamma_{\Q}}\bigg[(C_0V_\tau)^{r_S-r_\beta}\mT_{\P}(r_S)\Big(\frac{\log(nm)}{n}\Big)^{r_S}\bigg]\wedge \bigg[(C_0V_\tau)^{r_T - r_\beta}\mT_{\Q}(r_T)\Big(\frac{\log(nm)}{m}\Big)^{r_T}\bigg]\\
        &\qquad + C_{\Theta}\Big[(C_0V_\tau)^{r_S}\mT_{\P}(r_S)\Big(\frac{\log(nm)}{n}\Big)^{r_S}\Big]\wedge \Big[(C_0V_\tau)^{r_T}\mT_{\Q}(r_T)\Big(\frac{\log(nm)}{m}\Big)^{r_T}\Big]\\
        &\leq \ol C_0\Big[\mT_{\P}(r_S)\Big(\frac{\log(nm)}{n}\Big)^{r_S}\Big]\wedge \Big[\mT_{\Q}(r_T)\Big(\frac{\log(nm)}{m}\Big)^{r_T}\Big],
    \end{align*}
    for $\ol C_0$ being the sum of all the constants above.\\
    \noindent In the \textbf{accelerated regime}, collecting the terms leads to
    \begin{align*}
        \big\|\wh f - f_*\big\|_{L^2(\Q\mathstrut_{\!\sX})}^2 &\leq C_{\Delta}\log(nm)^{r_\beta}\k_{\P}^{-\gamma a/r_{\beta}}\k_{\Q}^{-s(1 - a)/r_\beta}n^{-\gamma a}m^{-s(1- a)}\mT_{\P}(\gamma)^{a} \mT_{\Q}(s)^{1 - a}\\
        &\quad + C_{\Gamma_{\P}}\log(nm)^{r_\beta}(\ell_{\P}C_{p})^{\gamma a - r_\beta}(\ell_{\Q}C_q)^{s(1 - a)}n^{-\gamma a}m^{-s(1 - a)}\mT_{\P}(\gamma)^{a}\mT_{\Q}(s)^{1-a}\\
        &\quad + C_{\Gamma_{\Q}}\log(nm)^{r_\beta}(C_q\ell_{\Q})^{s(1 - a) - r_\beta}(C_p\ell_{\P})^{\gamma a}n^{-\gamma a}m^{-s(1 - a)}\mT_{\P}(\gamma)^{a}\mT_{\Q}(s)^{1 - a}\\
        &\quad + C_{\Theta}(C_p\ell_{\P})^{\gamma a}(C_q\ell_{\Q})^{s(1 - a)}n^{-\gamma a}m^{-s(1 - a)}\mT_{\P}(\gamma)^{a}\mT_{\Q}(s)^{1-a}.
    \end{align*}
    Picking $\k_{\P} = \k_{\Q} = 1$ and recalling that $\ell_{\P} = \ell_{\Q} = \lceil V_\tau \log(nm)\rceil$ leads to $C_p = C_q$ and
    \begin{align*}
        \big\|\wh f - f_*\big\|_{L^2(\Q\mathstrut_{\!\sX})}^2 &\leq C_{\Delta}\Big(\frac{\log(nm)}{n}\Big)^{\gamma a}\Big(\frac{\log(nm)}{m}\Big)^{s(1- a)}\mT_{\P}(\gamma)^{a} \mT_{\Q}(s)^{1 - a}\\
        &\quad + C_{\Gamma_{\P}}\Big(\frac{\log(nm)}{n}\Big)^{\gamma a}\Big(\frac{\log(nm)}{m}\Big)^{s(1- a)}\mT_{\P}(\gamma)^{a}\mT_{\Q}(s)^{1-a}\\
        &\quad + C_{\Gamma_{\Q}}\Big(\frac{\log(nm)}{n}\Big)^{\gamma a}\Big(\frac{\log(nm)}{m}\Big)^{s(1- a)}\mT_{\P}(\gamma)^{a}\mT_{\Q}(s)^{1 - a}\\
        &\quad + C_{\Theta}(C_0V_\tau)^{r_\beta}\Big(\frac{\log(nm)}{n}\Big)^{\gamma a}\Big(\frac{\log(nm)}{m}\Big)^{s(1- a)}\mT_{\P}(\gamma)^{a}\mT_{\Q}(s)^{1-a}\\
        &= \tilde C_0\Big(\frac{\log(nm)}{n}\Big)^{\gamma a}\Big(\frac{\log(nm)}{m}\Big)^{s(1- a)}\mT_{\P}(\gamma)^{a}\mT_{\Q}(s)^{1-a},
    \end{align*}
    which yields the claimed upper bound after replacing $a = \tfrac{r_\beta - s}{\gamma - s}.$ Recall that we were working in the event $E_n^{\P_{\sX}}(\ell_{\P}, \wh f_{\P}) \cap E_m^{\Q\mathstrut_{\!\sX}}(\ell_{\Q}, \wh f_{\Q}).$ It therefore remains to use Propositions \ref{prop.neighbours.distance.bound.zeta} and \ref{prop.neighbours.distance.lower.bound.zeta}, Corollary \ref{cor.variance.bound.nice.version}, and Remark \ref{rem.bound.variable.k} together with the union bound to obtain
    \begin{align*}
        \PP\bigg\{E_n^{\P_{\sX}}(\ell_{\P}, \wh f_{\P}) \cap E_m^{\Q\mathstrut_{\!\sX}}(\ell_{\Q}, \wh f_{\Q})\bigg\} \geq 1 - 2n^{-\tau} - n^{1-\tau} - 2m^{-\tau} - m^{1-\tau} \geq 1 - 3n^{1-\tau} - 3m^{1-\tau}.
    \end{align*}
\end{proof}

\subsection{Proof of Theorem \ref{th.lower.bound}}
\label{app.lower.bound}

\begin{theorem}
    \label{th.lower.bound.app}
    Work under \eqref{eq.covariate.shift} and within the regression model \eqref{eq.model} with Gaussian noise $\eps \sim \mN(0, \sigma_e^2)$ with $\sigma_e^2 > 0.$ Let $L, D > 0, \ \theta \geq \Leb(\B(0, 1)), \beta \in (0, 1]$ and $(\gamma, s) \in (0, \infty)\times (0, 1).$ Then, there exists a constant $C_0 > 0$ independent of $n,m$ such that for all $n\vee m >1$, 
    \begin{align*}
        \inf_{\wh f}\sup_{\substack{f_* \in \mH(L, \beta)\\ \P_{\sX}, \Q\mathstrut_{\!\sX} \in \ul\mP(D, \theta, \gamma, s)}} \E\Big[\|\wh f - f_*\|_{L^2(\Q\mathstrut_{\!\sX})}^2\Big] \geq  \begin{dcases}
            C_0m^{- s\tfrac{\gamma - r_\beta}{\gamma - s}}n^{-\gamma\tfrac{r_\beta - s}{\gamma - s}} & \vcenter{\hbox{\shortstack[l]{if $(\gamma-r_\beta)(s-r_\beta)<0,$\\and $m\in[n,n^{\gamma/s}]$}}}\\
            C_0\Big[n^{-(\gamma\wedge r_\beta)} \wedge m^{-(s \wedge r_\beta)}\Big] & \text{ else. }
        \end{dcases}
    \end{align*}
    where the $\inf$ is taken over all estimators obtained from $n$ samples from $\P_{\sX, \sY}$ and $m$ samples from $\Q\mathstrut_{\!\X, \sY}.$
\end{theorem}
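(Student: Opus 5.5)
We will prove the lower bound by a standard multiple-hypothesis (Assouad / Varshamov--Gilbert) argument, applied to one fixed Pareto-type source--target pair per regime. Since the supremum over $\ul\mP(D,\theta,\gamma,s)$ dominates the risk at any single pair, it suffices to exhibit, for each regime, a pair $(\P_{\sX},\Q\mathstrut_{\!\sX})$ with $\gamma^*(\P_{\sX},\Q\mathstrut_{\!\sX})\geq\gamma$ and $\gamma^*(\Q\mathstrut_{\!\sX},\Q\mathstrut_{\!\sX})\geq s$, lying in $\mP(D,\theta)$. We then prove the rate for a regression problem over $\mH(L,\beta)$ with that fixed design pair.

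\textbf{Choice of the pair.} We take a product-type density on $\RR^d$, for instance $\Par$-tails in one coordinate and a bounded factor in the others, adapting Example \ref{ex.pareto}. By matching $\alpha_{\Q}/(\alpha_{\Q}+d)\approx s$ and $\alpha_{\Q}/(\alpha_{\P}+d)\approx\gamma$, we get $q(x)\asymp\|x\|^{-(d+\alpha_{\Q})}$ and $p(x)\asymp\|x\|^{-(d+\alpha_{\P})}$ for large $\|x\|$, with indices equal to $(\gamma,s)$ exactly. This is possible since $s<1$. The local mass condition \eqref{eq.minimal.maximal.mass} follows as in Example \ref{ex.pareto}, after rescaling $\sigma$ so that $D$ and $\theta\geq\Leb(\B(0,1))$ are respected.

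\textbf{Construction of hypotheses.} Fix a radius $R$ and a bandwidth $h(R)$ adapted to the shell $\{\|x\|\asymp R\}$. On this shell, $p\asymp R^{-(d+\alpha_{\P})}$ and $q\asymp R^{-(d+\alpha_{\Q})}$. We pack $M\asymp R^d/h^d$ disjoint balls of radius $h$ in the shell and place bumps $\omega_j L h^\beta\phi((x-z_j)/h)$ with $\omega\in\{0,1\}^M$. These lie in $\mH(L,\beta)$.

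\textbf{Choice of the bandwidth.} The KL divergence between neighbouring hypotheses is at most
\[
(np+mq)\,h^{d+2\beta}/\sigma^2 .
\]
Choosing $h\asymp(np(R)+mq(R))^{-1/(2\beta+d)}$ keeps this $O(1)$. Assouad then gives a risk of order $\Q\mathstrut_{\!\sX}(\text{shell})\,h^{2\beta}\asymp R^{-\alpha_{\Q}}(np(R)+mq(R))^{-r_\beta}$. We additionally allow $h$ to saturate at $h\asymp R$, i.e.\ a single bump of size $R^\beta\wedge L$. This covers the regime where the shell is nearly empty and gives risk $\asymp R^{-\alpha_{\Q}}$ whenever $np(R)+mq(R)\lesssim1$.

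\textbf{Optimising over $R$.} Writing $R^{-\alpha_{\Q}}=\Q(\|X\|>R)$, the lower bound becomes
\[
\sup_R\, R^{-\alpha_{\Q}}\bigl(1+nR^{-(d+\alpha_{\P})}+mR^{-(d+\alpha_{\Q})}\bigr)^{-r_\beta}.
\]
The calculus is then:
\begin{itemize}
\item Taking $R$ with $nR^{-(d+\alpha_{\P})}\asymp1$ yields $n^{-\gamma}$.
\item Taking $mR^{-(d+\alpha_{\Q})}\asymp1$ yields $m^{-s}$.
\item Taking $R$ bounded yields $n^{-r_\beta}\wedge m^{-r_\beta}$.
\item Taking $R$ at the crossing point $np(R)\asymp mq(R)$ yields the accelerated rate. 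Using $\alpha_{\P}=\alpha_{\Q}/\gamma-d$ and $\alpha_{\Q}=sd/(1-s)$, the crossing $R^{\alpha_{\P}-\alpha_{\Q}}\asymp n/m$ gives $R^{-\alpha_{\Q}}(n R^{-(d+\alpha_{\P})})^{-r_\beta}$, which simplifies to $n^{-\gamma a}m^{-s(1-a)}$ with $a=(r_\beta-s)/(\gamma-s)$.
\end{itemize}
We must check that in the supercritical case with $m\in[n,n^{\gamma/s}]$ this crossing value of $R$ is $\geq1$ and is the maximiser. In the other cases, the maximum is attained at an endpoint, giving the wedge rate. This is the bookkeeping we expect to be the main obstacle: the exponents must match exactly, and one must handle that $d+\alpha$ relates to $\gamma,s$ through $d$.

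If $\alpha_{\P}<\alpha_{\Q}$ (the case $\gamma<s$), the crossing is reversed, consistent with the interval $[n^{\gamma/s},n]$. Borderline values, where the pair needed is not exactly Pareto, are handled by taking the index constraints as $\geq$, which the class allows.
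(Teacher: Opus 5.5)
Your argument is essentially the paper's, read in the dual parametrisation. The paper uses a product Pareto pair on the orthant, bumps in the cube $[a,2a]^d$, and Fano with Varshamov--Gilbert. It fixes the bandwidth $h$, solves the KL constraint for the location $a=a(h)$, and then chooses $h$ case by case. You instead fix the location $R$, take $h=h(R)$ from the per-bump constraint $(np(R)+mq(R))h^{2\beta+d}\lesssim 1$, and maximise $R\mapsto R^{-\alpha_{\Q}}(np(R)+mq(R))^{-r_\beta}$ via Assouad. This is the same computation, but it avoids the $M\geq 8\vee 32\log 2$ bookkeeping and shows directly why the crossing $np(R)=mq(R)$ produces the accelerated rate.

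The step you flagged does close. With $\alpha_{\Q}=sd/(1-s)$ and $d+\alpha_{\P}=\alpha_{\Q}/\gamma$ one gets $(d+\alpha_{\P})/(\alpha_{\Q}-\alpha_{\P})=s/(\gamma-s)$. So for $\gamma>s$ the crossing point satisfies $R_\times\geq 1$ iff $m\geq n$, and $np(R_\times)\asymp (n^\gamma/m^s)^{1/(\gamma-s)}\geq 1$ iff $m^s\leq n^\gamma$. Its value is $n^{-\gamma a}m^{-s(1-a)}$ with $a=(r_\beta-s)/(\gamma-s)$. The wedge cases come from $R\asymp 1$ or $R=R_n\vee R_m$, with $R_n=n^{1/(d+\alpha_{\P})}$ and $R_m=m^{1/(d+\alpha_{\Q})}$. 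For a lower bound you only need these values, not that they are maximisers.

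The genuine gap is the claim that the matching works ``since $s<1$''. It requires $\alpha_{\P}=\alpha_{\Q}/\gamma-d>0$, i.e.\ $\gamma<s/(1-s)$, and no other fixed pair can do better.
\begin{itemize}
\item By H\"older, $\int q^{1-s'}\,dx\leq \mT(\P,\Q,\gamma')^{1-s'}$ with $s'=\gamma'/(1+\gamma')$, so $s^*\geq \gamma^*/(1+\gamma^*)$ for every pair.
\item Hence for $\gamma>s/(1-s)$ every pair in $\ul\mP(D,\theta,\gamma,s)$ has $s^*>s$.
\item When also $s<r_\beta$, the claimed rate genuinely depends on $s$, and by the upper bound no fixed pair can witness it.
\end{itemize}
The paper's construction has exactly the same restriction ($\gamma=\alpha_{\Q}/(\alpha_{\P}+1)<\alpha_{\Q}=s/(1-s)$), so you are not behind the paper here. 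Your closing remark is still wrong: the ``$\geq$'' in the class cannot rescue anything, since pairs with larger indices are easier. What does cover this range is that the supremum lets the pair depend on $(n,m)$. Rescaling any pair of the class by a factor $V\geq 1$ leaves the indices and the local-mass constant unchanged and only lowers the densities. Once $V^d\gg n+m$, unit-scale bumps of height $\asymp L$ have per-bump KL $\lesssim 1$ and already force a risk of constant order.

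There are two smaller slips.
\begin{itemize}
\item The saturated construction must use $\asymp R^d$ unit-width bumps of height $\asymp L$, not one bump of width $R$, whose KL is $\asymp (np(R)+mq(R))R^d$. You never need it anyway: every $R$ above has $np(R)+mq(R)\gtrsim 1$, i.e.\ $h\lesssim 1$, which is also what keeps the bumps in $\mH(L,\beta)$.
\item One Pareto coordinate times a bounded factor gives the one-dimensional indices $\alpha_{\Q}/(\alpha_{\P}+1)$ and $\alpha_{\Q}/(\alpha_{\Q}+1)$ with slab-shaped regions, not $q\asymp\|x\|^{-(d+\alpha_{\Q})}$. A radial density $\propto (1+\|x\|)^{-(d+\alpha)}$ gives what you wrote. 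Either version works with the same algebra (replace $d$ by $1$ in the tail exponents for the slab), but you must commit to one.
\end{itemize}
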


\begin{proof}[Proof of Theorem \ref{th.lower.bound}]
    Let $\alpha_{\P}, \alpha_{\Q} >0$ and define
    \begin{align*}
        \gamma = \frac{\alpha_{\Q}}{\alpha_{\P} + 1}, \ \text{ and } \ s = \frac{\alpha_{\Q}}{\alpha_{\Q} + 1}.
    \end{align*}
    Consider the distributions $\P_{\sX}$ and $\Q\mathstrut_{\!\sX}$ with respective densities 
    \begin{align*}
        p(x) &:= \Big(\frac{\alpha_{\P}}{\sigma}\Big)^d\prod_{i=1}^d\Big(1 + \frac{x_i}{\sigma}\Big)^{-(\alpha_{\P} + 1)}\mathds{1}(x_i \geq 0)\\
        q(x) &:= \Big(\frac{\alpha_{\Q}}{\sigma}\Big)^d\prod_{i=1}^d\Big(1 + \frac{x_i}{\sigma}\Big)^{-(\alpha_{\Q} + 1)}\mathds{1}(x_i \geq 0).
    \end{align*}
    As seen in Lemma \ref{lem.pareto}, it holds that $\gamma = \gamma^*(\P_{\sX}, \Q\mathstrut_{\!\sX})$ and $s = \gamma^*(\Q\mathstrut_{\!\sX}, \Q\mathstrut_{\!\sX}).$ Additionally, Lemma \ref{lem.pareto} ensures that taking $\sigma$ large enough leads to $(\P_{\sX}, \Q\mathstrut_{\!\sX}) \in \mP(D, \theta).$
    Let $a \geq 0$ and $z_1, \dots, z_M$ be the centres of a (maximal) $2h$-packing of $[a, 2a]^d$ in infinity norm. It holds that
    \begin{align*}
        \Big(\frac{a}{2h}\Big)^d \leq M \leq \Big(\frac{3a}{2h}\Big)^d.
    \end{align*}
    Consider $\Phi \in \mH(L, \beta)$ to be a Kernel supported on $[-1, 1]^d$ such that $\|\Phi\|_{\infty} <\infty.$ Let $\phi_j(x) := Lh^\beta \Phi(|x - z_j|/h).$ Varshamov-Gilbert's bound \cite[Lemma 2.9]{MR2724359} ensures that, provided $M\geq 8,$ there exists a subset $B$ of $\{0, 1\}^M$ such that 
    \begin{enumerate}
        \item [$(i)$] $\card(B) \geq 2^{M/8}$ and
        \item [$(ii)$] $|b - b'|_1 \geq M/8$ for all $b, b' \in B$ and $b \neq b'.$
    \end{enumerate}
    For $b \in B$ define $f_b := \sum_{i=1}^M b_i \phi_i$ and let $\P_b = \P_{f_b},$ the joint distribution of $n$ i.i.d.\ pairs $(X_i, Y_i)$ where $X_i \sim \P_{\sX}$ and $Y_i$ is distributed according to Model \eqref{eq.model}. Next,
    \begin{align}
    \label{eq.L2Q.bound}
    \begin{split}
        \|f_{b} - f_{b'}\|^2_{L^2(\Q\mathstrut_{\!\sX})} &= \int \sum_{i=1}^M (b_i - b_i')^2\phi_i(x)^2\, d\Q(x)\\
        &= \sum_{i=1}^M|b_i - b_i'|\int_{\B(z_i, h)}\phi_i(x)^2\, d\Q(x)\\
        &\geq \Big(\frac{\alpha_{\Q}}{\sigma}\Big)^d\frac{L^2h^{2\beta}}{(1 + 2a/\sigma)^{d(\alpha_{\Q} + 1)}}\sum_{i=1}^M|b_i - b_i'|\int_{\B(z_i, h)}\Phi\bigg(\frac{x - z_i}{h}\bigg)^2\, dx\\
        &= \Big(\frac{\alpha_{\Q}}{\sigma}\Big)^d\frac{L^2h^{2\beta + d}\|\Phi\|_2^2}{(1 + 2a/\sigma)^{d(\alpha_{\Q} + 1)}}\sum_{i=1}^M|b_i - b_i'|\\
        &\geq \Big(\frac{\alpha_{\Q}}{\sigma}\Big)^d\frac{L^2h^{2\beta+d}M\|\Phi\|_{2}^2}{8(1 + 2a/\sigma)^{d(\alpha_{\Q} + 1)}} =: \frac{C_{q, 2}h^{2\beta + d}M}{(1 + 2a/\sigma)^{d(\alpha_{\Q} + 1)}}.
        \end{split}
    \end{align}
    Similarly, we can derive the following upper bounds.
    \begin{align*}
        \|f_{b} - f_{b'}\|^2_{L^2(\Q\mathstrut_{\!\sX})} &= \int \sum_{i=1}^M (b_i - b_i')^2\phi_i(x)^2\, d\Q(x)\\
        &\leq \Big(\frac{\alpha_{\Q}}{\sigma}\Big)^d\frac{L^2h^{2\beta}}{(1 + a/\sigma)^{d(\alpha_{\Q} + 1)}}\sum_{i=1}^M|b_i - b_i'|\int_{\B(z_i, h)}\Phi\bigg(\frac{x - z_i}{h}\bigg)^2\, dx\\
        &\leq \Big(\frac{\alpha_{\Q}}{\sigma}\Big)^d\frac{L^2h^{2\beta+d}M\|\Phi\|_{2}^2}{8(1 + a/\sigma)^{d(\alpha_{\Q} + 1)}} =: \frac{C_{q, 1}h^{2\beta + d}M}{(1 + a/\sigma)^{d(\alpha_{\Q} + 1)}},
    \end{align*}
    and 
    \begin{align*}
        \|f_{b} - f_{b'}\|^2_{L^2(\P_{\sX})} \leq \Big(\frac{\alpha_{\P}}{\sigma}\Big)^d\frac{L^2h^{2\beta+d}M\|\Phi\|_{2}^2}{8(1 + a/\sigma)^{d(\alpha_{\P} + 1)}} =: \frac{C_{p, 1}h^{2\beta + d}M}{(1 + a/\sigma)^{d(\alpha_{\P} + 1)}}.
    \end{align*}
    Fano's inequality \cite[Proposition 15.2]{wainwrighthighdimstat} combined with an elementary upper-bound on the mutual information leads to
    \begin{align}
        \label{eq.fano}
        \begin{split}
        \inf_{\wh f} \sup_{f \in \mH(L, \beta)}\E_{f}&\big[\|\wh f - f\|_{L^2(\Q\mathstrut_{\!\sX})}^2\big]\\
        &\geq \inf_{b\neq b'}\|f_b - f_{b'}\|_{L^2(\Q\mathstrut_{\!\sX})}\bigg(1 - \frac{\log(2) + \max_{b, b'}\KL(\P_b, \P_{b'})}{\log(\card(B))}\bigg).
        \end{split}
    \end{align}
    Denote by $r_{\beta} := 2\beta/(2\beta + d)$ for short. We first assume that $M > 8,$ which we will check in all the latter cases. Then we have
    \begin{align}
    \label{eq.bound.log2.fano.heavy.tail}
    \begin{split}
        \frac{\log(2)}{\log(\card(B))} \leq \frac{8\log(2)}{M} \leq \frac14,
    \end{split}
    \end{align}
    as long as $M \geq 32\log(2),$ which we verify later. Before splitting into cases, we summarise the constraints derived by Fano's method in our setting. The packing in $\KL$ requires
    \begin{align}
    \label{eq.KL.bound.heavy.tail}
    \begin{split}
        \frac{\max_{b, b'}\KL(\P_b, \P_{b'})}{\log(\card(B))} &\leq \frac{8\max_{b, b'}\KL(\P_b, \P_{b'})}{M}\\
        &\leq \frac{8}{M\sigma_e^2}\max_{b, b'}\Big[n\|f_b - f_{b'}\|_{L^2(\P_{\sX})}^2 + m\|f_b - f_{b'}\|_{L^2(\Q\mathstrut_{\!\sX})}^2\Big]\\
        &\leq \frac{16}{M\sigma_e^2} \bigg(\frac{C_{p, 1}nMh^{2\beta+d}}{(1 + a/\sigma)^{d(\alpha_{\P} + 1)}}\vee\frac{C_{q, 1}mMh^{2\beta+d}}{(1 + a/\sigma)^{d(\alpha_{\Q} + 1)}}\bigg)\\
        &= \frac{16}{M\sigma_e^2}h^{2\beta + d}\bigg(\frac{C_{p, 1}n}{(1 + a/\sigma)^{d(\alpha_{\P} + 1)}}\vee\frac{C_{q, 1}m}{(1 + a/\sigma)^{d(\alpha_{\Q} + 1)}}\bigg)\\
        &\leq \frac{16}{\sigma_e^2}h^{2\beta + d}\bigg(\frac{C_{p, 1}n}{(a/\sigma)^{d(\alpha_{\P} + 1)}}\vee\frac{C_{q, 1}m}{(a/\sigma)^{d(\alpha_{\Q} + 1)}}\bigg)\\
        &\leq h^{2\beta + d}\tilde C\bigg(\frac{n}{(a/\sigma)^{d(\alpha_{\P} + 1)}}\vee\frac{m}{(a/\sigma)^{d(\alpha_{\Q} + 1)}}\bigg),
    \end{split}
    \end{align}
    where $\tilde C := 16\big(C_{p, 1}\vee C_{q, 1}\big)/\sigma_e^2,$ and the latter upper bound is less than $1/4$ if $\mu\geq 1$ and $a$ is chosen as
    \begin{align}
        \label{eq.fano.support.diameter}
        a = 4\sigma\mu\bigg[\Big(\tilde C^{\tfrac{1}{d(\alpha_{\P} + 1)}}n^{\tfrac{1}{d(\alpha_{\P} + 1)}}h^{\tfrac{2\beta + d}{d(\alpha_{\P} + 1)}}\Big)\vee \Big(\tilde C^{\tfrac{1}{d(\alpha_{\Q} + 1)}}m^{\tfrac{1}{d(\alpha_{\Q} + 1)}}h^{\tfrac{2\beta + d}{d(\alpha_{\Q} + 1)}}\Big)\bigg].
    \end{align}
    With this choice of $a$ we obtain, after applying Fano's method, and assuming that $a \geq \sigma,$ which will be checked later,
    \begin{align}
        \label{eq.fano1}
        \begin{split}
        \inf_{\wh f} \sup_{f \in \mH(L, \beta)}\E_{f}\big[\|\wh f - f\|_{L^2(\Q\mathstrut_{\!\sX})}^2\big] &\geq \frac 12\inf_{b\neq b'}\|f_b - f_{b'}\|_{L^2(\Q\mathstrut_{\!\sX})}\\
        &\geq \frac{C_{q, 2}}{2}\frac{h^{2\beta + d}M}{(1 + 2a/\sigma)^{d(\alpha_{\Q} + 1)}}\\
        &\geq \frac{C_{q, 2}}{2}\frac{h^{2\beta}a^d}{(4a/\sigma)^{d(\alpha_{\Q} + 1)}}\\
        &=: C_1 \frac{h^{2\beta}}{a^{d\alpha_{\Q}}}\\
        &= \frac{C_1}{(4\sigma\mu)^{d\alpha_{\Q}}}\bigg(\frac{h^{2\beta - (2\beta + d)\gamma}}{\tilde C^{\gamma}n^{\gamma}} \wedge \frac{h^{2\beta - (2\beta + d)s}}{\tilde C^s m^s}\bigg)\\
        &= \frac{C_1}{(4\sigma\mu)^{d\alpha_{\Q}}}\bigg(\frac{h^{(2\beta + d)(r_\beta - \gamma)}}{\tilde C^{\gamma}n^{\gamma}} \wedge \frac{h^{(2\beta + d)(r_\beta - s)}}{\tilde C^s m^s}\bigg).
        \end{split}
    \end{align}
    We now consider different cases depending on $(\gamma, s).$ \\
    \noindent\textbf{Case 1: $\gamma, s \leq r_\beta$.} Then, we can choose $h = 1,$ which leads to
    \begin{align*}
        a = 4\sigma\mu\bigg[\Big(\tilde C^{\tfrac{1}{d(\alpha_{\P} + 1)}}n^{\tfrac{1}{d(\alpha_{\P} + 1)}}\Big)\vee \Big(\tilde C^{\tfrac{1}{d(\alpha_{\Q} + 1)}}m^{\tfrac{1}{d(\alpha_{\Q} + 1)}}\Big)\bigg].
    \end{align*}
    We notice that $a \to \infty$ as either $n\to \infty$ or $m\to \infty.$ This, and picking $\mu$ large enough, leads to that $M \geq 8\vee 32\log(2)$ and $a \geq \sigma$ for all $n \vee m \geq 1.$ Hence, Fano's inequality \eqref{eq.fano1} yields
    \begin{align*}
        \inf_{\wh f} \sup_{f \in \mH(L, \beta)}\E_{f}\big[\|\wh f - f\|_{L^2(\Q\mathstrut_{\!\sX})}^2\big] &\geq \frac{C_1}{(4\sigma\mu)^{d\alpha_{\Q}}}\bigg(\frac{h^{(2\beta + d)(r_\beta - \gamma)}}{\tilde C^{\gamma}n^{\gamma}} \wedge \frac{h^{(2\beta + d)(r_\beta - s)}}{\tilde C^s m^s}\bigg)\geq \frac{C}{n^{\gamma}\vee m^s}.
    \end{align*}
    This finishes the proof in the case $\gamma, s \leq r_\beta.$\\
    \noindent\textbf{Case 2: $\gamma, s \geq r_\beta.$} In this case, we take $a$ defined as before and pick 
    \begin{align*}
        h = (n\vee m)^{-1/(2\beta + d)}.
    \end{align*}
    This leads to 
    \begin{align*}
        a &= 4\sigma\mu\Big(\frac{\tilde C^{1/(d(\alpha_{\P} + 1))}n^{1/(d(\alpha_{\P} + 1))}}{(n\vee m)^{1/(d(\alpha_{\P} + 1))}}\vee \frac{\tilde C^{1/(d(\alpha_{\Q} + 1))}m^{1/(d(\alpha_{\Q} + 1))}}{(n\vee m)^{1/(d(\alpha_{\Q} + 1))}}\Big) =: \mu C,
    \end{align*}
    which is a constant, while $h$ is a null sequence in $n\vee m.$ Hence, by picking $\mu$ large enough, we have $M \asymp (a/h)^d \geq 8\vee (32\log(2))$ and  $a \geq \sigma$ for all $n\vee m \geq 1.$ Consequently, the lower bound \eqref{eq.fano1} becomes
    \begin{align*}
        \inf_{\wh f} \sup_{f \in \mH(L, \beta)}\E_{f}\big[\|\wh f - f\|_{L^2(\Q\mathstrut_{\!\sX})}^2\big] &\geq C_1 \frac{h^{2\beta}}{a^{d\alpha_{\Q}}}\\
        &= \frac{C_1}{C^{d\alpha_{\Q}}} (n \vee m)^{2\beta/(2\beta + d)},
    \end{align*}
    as claimed. This shows the lower bound for all $(\gamma, s) \in (0, \infty)\times (0, 1)$ in positive and null configurations. Next, we show the lower bounds for negative configurations.\\
    \noindent\textbf{Case 3: $s < r_\beta < \gamma.$} This case is further subdivided into three sub-cases.\\
    
    \noindent\textbf{Case 3.1: $m^s \leq n^s.$} Taking 
    \begin{align*}
        h = n^{-1/(2\beta + d)}
    \end{align*}
    leads to
    \begin{align*}
        a &= 4\sigma\mu \Big(\tilde C^{1/(d(\alpha_{\P} + 1))} \vee \tilde C^{1/(d(\alpha_{\Q} + 1))}m^{s/(d\alpha_{\Q})}n^{-s/(d\alpha_{\Q})}\Big) \in \big[4\sigma\mu \tilde C^{1/(d(\alpha_{\P} + 1))}, 4\sigma\mu \tilde C^{1/(d(\alpha_{\Q} + 1))}\big],
    \end{align*}
    which is of constant order, while $h$ goes to $0$ as $n$ (or $m,$ since $m^s \leq n^s$) grows. Hence, picking $\mu$ large enough leads to $M \geq 8\vee(32\log(2))$ and $a \geq \sigma$ for all $0 \leq m \leq n,\ m\vee n \geq 1$ and the lower bound becomes
    \begin{align*}
        \inf_{\wh f} \sup_{f \in \mH(L, \beta)}\E_{f}\big[\|\wh f - f\|_{L^2(\Q\mathstrut_{\!\sX})}^2\big] &\geq C_1 \frac{h^{2\beta}}{a^{d\alpha_{\Q}}}\\
        &\geq C n^{-2\beta/(2\beta + d)} = C (n^{r_\beta} \vee m^s)^{-1},
    \end{align*}
    as claimed.\\
    
    \noindent\textbf{Case 3.2: $m^s \geq n^\gamma.$} In this case, we take $h = 1$ and we obtain
    \begin{align*}
        a &= 4\sigma\mu\bigg[\Big(\tilde C^{\tfrac{1}{d(\alpha_{\P} + 1)}}n^{\tfrac{1}{d\alpha_{\P} + 1}}\Big)\vee \Big(\tilde C^{\tfrac{1}{d(\alpha_{\Q} + 1)}}m^{\tfrac{1}{d(\alpha_{\Q} + 1)}}\Big)\bigg],
    \end{align*}
    which grows with $n$ and/or $m,$ hence, picking $\mu$ large enough ensures that$M \geq 8 \vee 32\log(2)$ and $a \geq \sigma$ for all $0 \leq n^\gamma \leq m^s, \ n \vee m \geq 1$ Consequently, 
    \begin{align*}
        \inf_{\wh f} \sup_{f \in \mH(L, \beta)}\E_{f}\big[\|\wh f - f\|_{L^2(\Q\mathstrut_{\!\sX})}^2\big] &\geq C_1\frac{h^{2\beta}}{a^{d\alpha_{\Q}}}\\
        &\geq C \big(n^{\gamma}\vee m^{s})^{-1}.
    \end{align*}
    This case is proven.\\

    \noindent\textbf{Case 3.3: $n^s < m^s < n^\gamma.$} In this case, assuming $M \geq 8\vee 32\log(2)$ and $a > \sigma,$ we notice that the lower bound obtained by Fano's method is essentially of the form
    \begin{align}
        \label{eq.3.3}
        \inf_{\wh f} \sup_{f \in \mH(L, \beta)}\E_{f}\big[\|\wh f - f\|_{L^2(\Q\mathstrut_{\!\sX})}^2\big] &\geq \frac{C_1}{(4\sigma\mu)^{d\alpha_{\Q}}}\bigg(\frac{h^{(2\beta + d)(r_\beta - \gamma)}}{\tilde C^{\gamma}n^{\gamma}} \wedge \frac{h^{(2\beta + d)(r_\beta - s)}}{\tilde C^s m^s}\bigg),
    \end{align}
    and because $r_\beta - \gamma < 0 < r_\beta - s,$ the first term in the minimum above is a decreasing function of $h$ while the second term is an increasing function of $h.$ We find that those two terms are (approximately) balanced by picking 
    \begin{align*}
        h := \Big(\frac{m^s}{n^\gamma}\Big)^{\tfrac{1}{(\gamma - s)(2\beta + d)}},
    \end{align*}
    and the lower bound becomes
    \begin{align*}
        \inf_{\wh f} \sup_{f \in \mH(L, \beta)}\E_{f}\big[\|\wh f - f\|_{L^2(\Q\mathstrut_{\!\sX})}^2\big] &\geq \frac{C_1 C(\gamma, s)}{(4\sigma\mu)^{d\alpha_{\Q}}}m^{-s}\Big(\frac{m^s}{n^\gamma}\Big)^{\tfrac{r_\beta - s}{\gamma - s}}\\
        &= \frac{C_1 C(\gamma, s)}{(4\sigma\mu)^{d\alpha_{\Q}}}m^{-s\tfrac{\gamma - r_\beta}{\gamma - s}}n^{-\gamma\tfrac{r_\beta - s}{\gamma - s}}.
    \end{align*}
    It remains to check that $M \geq 8\vee (32\log(2))$ with this choice. A useful reparametrisation here is to take $m^s = n^{\lambda \gamma + (1 - \lambda)s}$ for $\lambda \in (0, 1).$ With this in mind, we can rewrite
    \begin{align*}
        h = n^{\tfrac{(\gamma - s)(\lambda - 1)}{(\gamma - s)(2\beta + d)}} = n^{\tfrac{\lambda - 1}{2\beta + d}},
    \end{align*}
    and
    \begin{align*}
        M^{1/d} \asymp \frac{a}{h} &\asymp h^{-1} \Big(n^{\gamma/(d\alpha_{\Q})}h^{(2\beta + d)\gamma/(d\alpha_{\Q})} \vee m^{s/(d\alpha_{\Q})}h^{(2\beta + d)s/(d\alpha_{\Q})}\Big)\\
        &= \frac 1h\Big(n^{\gamma}h^{(2\beta + d)\gamma} \vee m^{s}h^{(2\beta + d)s}\Big)^{1/(d\alpha_{\Q})}\\
        &= \frac 1h\big(n^{\gamma\lambda} \vee n^{\gamma\lambda}\big)^{1/(d\alpha_{\Q})} = n^{\tfrac{\gamma\lambda}{d\alpha_{\Q}} + \tfrac{1 - \lambda}{2\beta + d}},
    \end{align*}
    and the exponent of $n$ above is strictly positive. Additionally, $n$ is necessarily greater than one, since otherwise there is no regime $n^s < m^s < n^\gamma.$ This, combined with choosing $\mu$ large enough, leads to $M \geq 8\vee (32\log(2))$ and $a\geq \sigma$ for all $n^s < m^s < n^\gamma.$ As a consequence, we obtain the lower bound
    \begin{align*}
        \inf_{\wh f} \sup_{f \in \mH(L, \beta)}\E_{f}\big[\|\wh f - f\|_{L^2(\Q\mathstrut_{\!\sX})}^2\big] &\geq \frac{C_1 C(\gamma, s)}{(4\sigma\mu)^{d\alpha_{\Q}}}m^{-s\tfrac{\gamma - r_\beta}{\gamma - s}}n^{-\gamma\tfrac{r_\beta - s}{\gamma - s}}.
    \end{align*}
    This finishes case 3.3.\ and Case 3.\\
    
    \noindent\textbf{Case 4: $\gamma < r_\beta < s.$} Again this case is split into three cases.\\
    
    \noindent\textbf{Case 4.1: $n^{\gamma} \leq m^\gamma.$} This case is symmetric to case 3.1. The lower bound is found by taking $h = m^{-1/(2\beta + d)}$ and following similar steps to obtain
    \begin{align*}
        \inf_{\wh f} \sup_{f \in \mH(L, \beta)}\E_{f}\big[\|\wh f - f\|_{L^2(\Q\mathstrut_{\!\sX})}^2\big] &\geq C m^{-2\beta/(2\beta + d)} = C (n^{\gamma} \vee m^{r_\beta})^{-1}.
    \end{align*}
    Hence, the claim holds in Case 4.1.\\
    
    \noindent\textbf{Case 4.2: $n^\gamma \geq m^s.$} This case is symmetric to case 3.2. The lower bound is obtained by taking $h = 1$ and following similar steps to obtain
    \begin{align*}
        \inf_{\wh f} \sup_{f \in \mH(L, \beta)}\E_{f}\big[\|\wh f - f\|_{L^2(\Q\mathstrut_{\!\sX})}^2\big] &\geq C_1\frac{h^{2\beta}}{a^{d\alpha_{\Q}}}\\
        &\geq C \big(n^{\gamma}\vee m^{s})^{-1} = Cn^{-\gamma}.
    \end{align*}
    Hence, the claim holds in Case 4.2.\\
    
    \noindent\textbf{Case 4.3: $m^\gamma < n^\gamma < m^s.$} This case is again symmetric to case 3.3. We detail the steps for completeness. Following the same steps as in case 3.3.\ we can see that this time the lower bound reads
    \begin{align*}
        \inf_{\wh f} \sup_{f \in \mH(L, \beta)}\E_{f}\big[\|\wh f - f\|_{L^2(\Q\mathstrut_{\!\sX})}^2\big] &= \frac{C_1}{(4\sigma\mu)^{d\alpha_{\Q}}}\bigg(\frac{h^{(2\beta + d)(r_\beta - \gamma)}}{\tilde C^{\gamma}n^{\gamma}} \wedge \frac{h^{(2\beta + d)(r_\beta - s)}}{\tilde C^s m^s}\bigg),
    \end{align*}
    and because $r_\beta - s < 0 < r_\beta - \gamma,$ the first term in the minimum above is an increasing function of $h$ while the second term is a decreasing function of $h.$ We find that those two terms are (approximately) balanced by picking 
    \begin{align*}
        h := \Big(\frac{n^\gamma}{m^s}\Big)^{\tfrac{1}{(s - \gamma)(2\beta + d)}},
    \end{align*}
    and the lower bound becomes
    \begin{align*}
        \inf_{\wh f} \sup_{f \in \mH(L, \beta)}\E_{f}\big[\|\wh f - f\|_{L^2(\Q\mathstrut_{\!\sX})}^2\big] &\geq \frac{C_1 C(\gamma, s)}{(4\sigma\mu)^{d\alpha_{\Q}}}n^{-\gamma}\Big(\frac{n^\gamma}{m^s}\Big)^{\tfrac{r_\beta - \gamma}{s - \gamma}}\\
        &= \frac{C_1 C(\gamma, s)}{(4\sigma\mu)^{d\alpha_{\Q}}}m^{-s\tfrac{r_\beta - \gamma}{s - \gamma}}n^{-\gamma\tfrac{s - r_\beta}{s - \gamma}}.
    \end{align*}
    We again can parametrise $n^{\gamma} = m^{\lambda s + (1 - \lambda)\gamma}$ and obtain that
    \begin{align*}
        \frac{a}{h} &= h^{-1} \Big(n^{\gamma/(d\alpha_{\Q})}h^{(2\beta + d)\gamma/(d\alpha_{\Q})} \vee m^{s/(d\alpha_{\Q})}h^{(2\beta + d)s/(d\alpha_{\Q})}\Big)\\
        &= \frac 1h\Big(n^{\gamma}h^{(2\beta + d)\gamma} \vee m^{s}h^{(2\beta + d)s}\Big)^{1/(d\alpha_{\Q})}\\
        &= \frac 1h\big(m^{s\lambda} \vee m^{s\lambda}\big)^{1/(d\alpha_{\Q})}\\
        &= m^{\tfrac{s\lambda}{d\alpha_{\Q}} + \tfrac{(1 - \lambda)}{2\beta + d}},
    \end{align*}
    and the latter is $m$ taken to a strictly positive power, which shows that $M$ diverges. This, and picking $\mu$ large enough ensures that $M \geq 8\vee (32\log(2)),$ and $a \geq \sigma$ for all $m^\gamma < n^\gamma < m^s.$ This proves the lower bound
    \begin{align*}
        \inf_{\wh f} \sup_{f \in \mH(L, \beta)}\E_{f}\big[\|\wh f - f\|_{L^2(\Q\mathstrut_{\!\sX})}^2\big] &\geq \frac{C_1 C(\gamma, s)}{(4\sigma\mu)^{d\alpha_{\Q}}}n^{-\gamma}\Big(\frac{n^\gamma}{m^s}\Big)^{\tfrac{r_\beta - \gamma}{s - \gamma}}\\
        &= \frac{C_1 C(\gamma, s)}{(4\sigma\mu)^{d\alpha_{\Q}}}m^{-s\tfrac{r_\beta - \gamma}{s - \gamma}}n^{-\gamma\tfrac{s - r_\beta}{s - \gamma}}.
    \end{align*}
    This finishes case 4.3, Case 4, and the overall proof.
\end{proof}

\section{Proofs for the examples}
\label{app.examples}
\begin{lemma}
    \label{lem.example.exponential}
    Let $\P_{\sX}  = \mE(\lambda),$ then, $\P_{\sX} \in \mP(\lambda, e^\lambda)$ and $\gamma^*(\P_{\sX}, \P_{\sX}) = 1.$ Additionally, if $\P_{\sX} = \mE(\lambda_{\P}), \ \Q\mathstrut_{\!\sX} = \mE(\lambda_{\Q}),$ and $\lambda = \lambda_{\P} \vee \lambda_{\Q},$ then, $\P_{\sX}$ and $\Q\mathstrut_{\!\sX}$ are both in $\mP(\lambda, \exp(\lambda)).$ Additionally, $\gamma^{*}(\P_{\sX}, \Q\mathstrut_{\!\sX}) = \lambda_{\Q}/\lambda_{\P}.$
\end{lemma}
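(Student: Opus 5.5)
We need to prove Lemma \ref{lem.example.exponential}: that $\mE(\lambda)$ belongs to $\mP(\lambda, e^\lambda)$, that $\gamma^*(\P,\P)=1$, that a pair of exponentials with parameters $\lambda_{\P},\lambda_{\Q}$ lies in $\mP(\lambda,e^{\lambda})$ with $\lambda=\lambda_{\P}\vee\lambda_{\Q}$, and that $\gamma^*(\P,\Q)=\lambda_{\Q}/\lambda_{\P}$. The plan is a direct computation in dimension $d=1$ (Example \ref{ex.exponential} is on $\RR$), in three steps: the density bound, the local mass bounds \eqref{eq.minimal.maximal.mass}, and the transfer function integral.

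\textbf{Density bound.} This step is immediate: $r(x)=\lambda e^{-\lambda x}\le \lambda$, so $\mE(\lambda)\in\mM(\lambda)$. Moreover, $\mM(\lambda')\subseteq\mM(\lambda)$ for $\lambda'\le\lambda$. The class $\mP(D,\theta)$ is monotone in both $D$ and $\theta$, so membership of the pair follows once each single exponential is handled with its own parameter.

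\textbf{Local mass.} Fix $x\ge 0$ and $r\in(0,1]$. The ball $\B(x,r)\cap[0,\infty)=[(x-r)\vee 0,\,x+r]$ has length between $r$ and $2r$, since it always contains $[x,x+r]$. On this interval the density lies between $r(x)e^{-\lambda r}\ge r(x)e^{-\lambda}$ and $r(x)e^{\lambda r}\le r(x)e^{\lambda}$. This gives
\begin{align*}
r(x)\,r\,e^{-\lambda}\le \P\{\B(x,r)\}\le 2r(x)\,r\,e^{\lambda}.
\end{align*}
The lower constant $e^{-\lambda}$ matches $\theta^{-1}$. The upper constant has an extra factor $2$. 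The direct fix is to refine the bound: on the left part $[x-r,x]$ the mass is at most $r(x)(e^{\lambda r}-1)/\lambda$, and on the right part it is at most $r(x)(1-e^{-\lambda r})/\lambda$. Their sum is $r(x)\,2\sinh(\lambda r)/\lambda$, and I would check whether $2\sinh(\lambda r)/(\lambda r)\le e^{\lambda}$ for $r\le 1$. If this fails for small $\lambda$, I would accept a $\theta$ of the form $2e^{\lambda}$ instead, which does not affect the rates. This constant bookkeeping is the only delicate point. Since $\lambda\mapsto e^\lambda$ is increasing, the smaller-rate distribution also satisfies the bounds with $\theta=e^{\lambda_{\P}\vee\lambda_{\Q}}$.

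\textbf{Transfer function.} We compute
\begin{align*}
\mT(\P,\Q,\gamma)=\int_0^\infty \lambda_{\Q}e^{-\lambda_{\Q}x}\lambda_{\P}^{-\gamma}e^{\gamma\lambda_{\P}x}\,dx.
\end{align*}
This integral is finite if and only if $\gamma\lambda_{\P}<\lambda_{\Q}$. Hence $\gamma^*(\P,\Q)=\lambda_{\Q}/\lambda_{\P}$, and the integral diverges at the endpoint. Specialising to $\P=\Q$ gives $\gamma^*(\P,\P)=1$.
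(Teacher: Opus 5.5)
Your route is the paper's. The lower local-mass bound comes from the sub-interval $[x,x+r]$, on which the density is at least $p(x)e^{-\lambda r}$. The upper bound comes from integrating exactly to get $2\sinh(\lambda r)/\lambda$; the paper splits into $x\le r$ and $x>r$, which is the same computation. The transfer function is handled by the same elementary integral, which correctly gives $\gamma^*(\P,\Q)=\lambda_{\Q}/\lambda_{\P}$ and $\gamma^*(\P,\P)=1$.

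The one step you left open does fail. The function $g(r):=2\sinh(\lambda r)/(\lambda r)$ is increasing with $g(0^+)=2$, so $\sup_{r\in(0,1]}g(r)=2\sinh(\lambda)/\lambda$. This value is attained as a mass ratio: take $x\ge 1$ and $r=1$, so that $\B(x,1)\subset[0,\infty)$. Since $2\sinh(\lambda)/\lambda\le e^{\lambda}$ is equivalent to $1-e^{-2\lambda}\le\lambda$, the upper local-mass bound with $\theta=e^{\lambda}$ is false for $\lambda$ below roughly $0.797$. For $\lambda<\log 2$ this is also forced by the necessary condition $\theta\ge\Leb(\B(0,1))=2$. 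So the lemma as stated is false for small $\lambda$. The paper's proof breaks at exactly this point: it asserts $2\sinh(\lambda t)/(\lambda t)\le e^{\lambda t}$, which fails as $t\to 0$, where the left side tends to $2$, as the paper itself observes.

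Your fallback is the correct repair, and you should commit to it rather than leave it conditional. Your crude bound $\P\{\B(x,r)\}\le 2e^{\lambda}p(x)\,r$ already shows that $\theta=2e^{\lambda}$ works. Alternatively, combining your two bounds, $\theta=e^{\lambda}\vee 2\sinh(\lambda)/\lambda$ works, with the $e^{\lambda}$ part covering your lower bound. Both choices are increasing in $\lambda$, so the statement for the pair with $\lambda=\lambda_{\P}\vee\lambda_{\Q}$ follows from monotonicity of $\mP(D,\theta)$ in $(D,\theta)$, as you say. The correction only changes the value of $\theta$ in Example~\ref{ex.exponential}, not the rates.
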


\begin{proof}
    We first check the assumptions in Definition \ref{def.class.proba}. By definition, the density of $\P_{\sX}$ is upper bounded by $\lambda.$ Next, for any $x > 0$ and $0 < t \leq 1,$
    \begin{align*}
        \P_{\sX}\{\B(x, t)\} = \int_{x - t}^{x + t} \lambda\exp(-\lambda u)\mathds{1}(u \geq 0)\, du &\geq \int_x^{x + t}\lambda\exp(-\lambda (x + t))\, du\\
        &= t\exp(-\lambda t)p(x) \geq e^{-\lambda}tp(x),
    \end{align*}
    This proves that the minimal mass property is satisfied with $\theta = e^\lambda.$ Next, we prove the maximal mass property. Let $t \in (0, 1].$ If $0 \leq x \leq t,$ then $\P_{\sX}\{\B(x, t)\} = 1 - \exp(-\lambda(x + t)),$ and
    \begin{align*}
        \frac{\P_{\sX}\{\B(x, t)\}}{tp(x)} = \frac{1 - \exp(-\lambda(x + t))}{t\lambda\exp(-\lambda x)} &= \frac{\exp(\lambda x) - \exp(-\lambda t)}{t\lambda}\\
        &\leq \frac{\exp(\lambda t) - \exp(-\lambda t)}{t\lambda} = \frac{2\sinh(\lambda t)}{\lambda t} \leq \exp(\lambda t).
    \end{align*}
    The function $g \colon t \mapsto 2\sinh(\lambda t)/(\lambda t)$ continuously increases on $\RR_+^*.$ Moreover, $g$ admits a limit as $t \to 0,$ which is equal to $2.$ We have proven that for $0 < t \leq 1$ and $0\leq x \leq t,$ it holds that $\P_{\sX}\{\B(x, t)\} \leq \theta p(x)t.$ We now consider $t < x,$ in which case we have
    \begin{align*}
        \frac{\P_{\sX}\{\B(x, t)\}}{tp(x)} = \frac{\exp(-\lambda(x - t)) - \exp(-\lambda(x + t))}{\lambda t \exp(-\lambda x)} = \frac{2\sinh(\lambda t)}{\lambda t} \leq \exp(\lambda t),
    \end{align*}
    as previously, for all $0 < t \leq 1.$ We have shown that $\P_{\sX} \in \mP(\lambda, \exp(\lambda)).$ Next, for $s \in [0, 1),$ we have
    \begin{align*}
        \mT(\P_{\sX}, \P_{\sX}) = \int p(x)^{1-s}\, dx = \int \lambda^{1-s} \exp(-\lambda x (1 - s))\, dx  < \infty,
    \end{align*}
    while for all $s \geq 1, \mT(\P_{\sX}, \P_{\sX}) = \infty.$ Hence, $\gamma^*(\P_{\sX}, \P_{\sX}) = 1.$ Finally, we show the second claim. Let $\gamma \in [0, \lambda_{\Q}/\lambda_{\P}).$ Then,
    \begin{align*}
        \int q(x)p(x)^{-\gamma}\, dx &= \frac{\lambda_{\Q}}{\lambda_{\P}^{\gamma}} \int \exp((\gamma\lambda_{\P} - \lambda_{\Q})x)\, dx < \infty,
    \end{align*}
    while $\mT(\P_{\sX}, \Q\mathstrut_{\!\sX}, \gamma) = \infty$ for all $\gamma \geq \lambda_{\Q}/\lambda_{\P}.$ Hence $\gamma^*(\P_{\sX}, \Q\mathstrut_{\!\sX}) = \lambda_{\Q}/\lambda_{\P}.$
\end{proof}

\begin{definition}[Pareto distribution]
    For $\alpha, \sigma > 0,$ we call Pareto distribution $\Par(\alpha, \sigma)$ the distribution on $\RR$ with density given by
    \begin{align*}
    p(x) = \frac{\alpha}{\sigma}(1 + x/\sigma)^{-(\alpha+1)}\mathds{1}(x\geq 0).
\end{align*}
\end{definition}

\begin{lemma}
    \label{lem.pareto}
    Let $\alpha, \sigma > 0$ and $\P_{\sX} = \Par(\alpha, \sigma),$ then $\P_{\sX} \in \mP(\alpha/\sigma, 2(1 + \sigma^{-1})^{\alpha +1})$ and $\gamma^*(\P_{\sX}, \P_{\sX}) = \alpha/(\alpha + 1).$ Additionally, if $\P_{\sX} = \Par(\alpha_{\P}, \sigma_{\P})$ and $\Q\mathstrut_{\!\sX} = \Par(\alpha_{\Q}, \sigma_{\Q})$ then $\P_{\sX}$ and $\Q\mathstrut_{\!\sX}$ are both in $\mP(D, \theta)$ with
    \begin{align*}
        D = \frac{\alpha_{\P}}{\sigma_{\P}}\vee\frac{\alpha_{\Q}}{\sigma_{\Q}}, \ \text{ and } \ \theta = 2\Big(1 + \frac{1}{\sigma_{\P}\wedge\sigma_{\Q}}\Big)^{\alpha_{\P}\wedge\alpha_{\Q} + 1}.
    \end{align*}
    Additionally, $\gamma^*(\P_{\sX}, \Q\mathstrut_{\!\sX}) = \alpha_{\Q}/(\alpha_{\P} + 1).$
\end{lemma}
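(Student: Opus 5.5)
The plan has three parts: the bound $\sup p\le D$, the two-sided local mass inequality \eqref{eq.minimal.maximal.mass}, and the integrability indices. All three come from direct one-dimensional computations with the explicit density $p(x)=\frac{\alpha}{\sigma}(1+x/\sigma)^{-(\alpha+1)}$ on $[0,\infty)$.

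\textbf{Density bound.} Since $p$ is decreasing on $[0,\infty)$, we have $\|p\|_\infty=p(0)=\alpha/\sigma$. For the pair, taking the maximum of $\alpha_{\P}/\sigma_{\P}$ and $\alpha_{\Q}/\sigma_{\Q}$ gives $D$.

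\textbf{Local mass.} Fix $x\ge 0$ and $t\in(0,1]$. For the lower bound, I integrate only over $[x,x+t]\subseteq\B(x,t)\cap[0,\infty)$. Since $p$ is decreasing,
\begin{align*}
\P\{\B(x,t)\}\ge t\,p(x+t)=t\,p(x)\Big(\frac{1+x/\sigma}{1+(x+t)/\sigma}\Big)^{\alpha+1}\ge t\,p(x)(1+1/\sigma)^{-(\alpha+1)}.
\end{align*}
The last step uses that $\frac{\sigma+x}{\sigma+x+t}\ge\frac{\sigma}{\sigma+1}$, with the worst case at $x=0$, $t=1$.

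For the upper bound, $\B(x,t)\cap[0,\infty)\subseteq[(x-t)\vee 0,x+t]$, an interval of length at most $2t$. On it $p\le p((x-t)\vee0)$. I will bound the ratio $p((x-t)\vee0)/p(x)=\big(\frac{\sigma+x}{\sigma+(x-t)\vee0}\big)^{\alpha+1}\le(1+t/\sigma)^{\alpha+1}\le(1+1/\sigma)^{\alpha+1}$. The middle inequality holds because the ratio $\frac{\sigma+x}{\sigma+y}$ with $x-y\le t$ and $y\ge0$ is maximised at $y=0$, giving $(\sigma+x)/\sigma$ with $x\le t$; when $x>t$ it equals $(\sigma+x)/(\sigma+x-t)\le1+t/\sigma$. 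This gives $\P\{\B(x,t)\}\le 2(1+1/\sigma)^{\alpha+1}t\,p(x)$, so $\theta=2(1+\sigma^{-1})^{\alpha+1}$ works for both inequalities.

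For two distributions, the constant is monotone in $\sigma$ (a smaller $\sigma$ is worse) and in $\alpha$. Since $1+1/\sigma>1$, the worst case is the largest exponent, so I must take a common $\theta$ dominating both. The statement writes the exponent as $\alpha_{\P}\wedge\alpha_{\Q}+1$. I will check this carefully: monotonicity actually requires $\vee$, so I expect either to note that the statement's constant should use $\vee$, or to accept any $\theta$ dominating both individual constants. This monotonicity bookkeeping is the only delicate point.

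\textbf{Indices.} For the self-index, $\int p^{1-s}\,dx\propto\int_0^\infty(1+x/\sigma)^{-(\alpha+1)(1-s)}\,dx$, which is finite if and only if $(\alpha+1)(1-s)>1$, i.e.\ $s<\alpha/(\alpha+1)$. Hence $\gamma^*(\P,\P)=\alpha/(\alpha+1)$.

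For the cross-index, $\int q\,p^{-\gamma}\,dx\propto\int_0^\infty(1+x/\sigma_{\Q})^{-(\alpha_{\Q}+1)}(1+x/\sigma_{\P})^{\gamma(\alpha_{\P}+1)}\,dx$. Near the origin the integrand is bounded. At infinity it behaves like $x^{\gamma(\alpha_{\P}+1)-\alpha_{\Q}-1}$, since the two scale factors $\sigma_{\P}$ and $\sigma_{\Q}$ only change constants. So the integral is finite if and only if $\gamma(\alpha_{\P}+1)<\alpha_{\Q}$, which yields $\gamma^*=\alpha_{\Q}/(\alpha_{\P}+1)$.
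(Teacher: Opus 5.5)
Your proposal is correct and follows the paper's proof essentially step for step: monotonicity of $p$ gives $r\,p(x+r)\le \P\{\B(x,r)\}\le 2r\,p((x-r)\vee 0)$, the density ratios are bounded by $(1+1/\sigma)^{\alpha+1}$, and the indices follow from the tail exponents $(\alpha+1)(1-s)$ and $\gamma(\alpha_{\P}+1)-\alpha_{\Q}-1$.

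Your flag on the pair constant is also right; the paper simply calls that inclusion ``obvious''. Since $2(1+1/\sigma)^{\alpha+1}$ increases in $\alpha$, the exponent must be $\alpha_{\P}\vee\alpha_{\Q}+1$. With $\wedge$ the statement fails: for $\sigma_{\P}=\sigma_{\Q}=1$, $\alpha_{\P}=1$, $\alpha_{\Q}=10$ one gets $\theta=8$, while $\theta^{-1}q(0)=1.25>\Q\{\B(0,1)\}$.
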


\begin{proof}
    Let $p$ be the density of $\P_{\sX} = \Par(\alpha, \sigma).$ We have, for all $ 0 < r \leq 1,$
    \begin{align*}
        (rp(x))^{-1}\int_{x-r}^{x + r}p(t)\, dt \leq \frac{2r\sup_{|t - x| \leq r}p(t)}{rp(x)} &= \frac{2p((x - r)\vee 0)}{p(x)}\\
        &= 2\Big(1 + \frac{x}{\sigma}\Big)^{\alpha + 1}\mathds{1}(x \leq r) + 2\Big(\frac{\sigma + x}{\sigma + x - r}\Big)^{\alpha + 1}\mathds{1}(x > r)\\
        &\leq 2(1 + r/\sigma)^{\alpha + 1} \leq 2\Big(1 + \frac{1}{\sigma}\Big)^{\alpha + 1}.
    \end{align*}
    This implies the maximal mass property since
    \begin{align*}
        \P_{\sX}\{\B(x, r)\} = \int_{(x - r)}^{x + r}p(x)\, dx \leq 2rp((x - r)\vee 0) \leq 2\Big(1 + \frac{x}{\sigma}\Big)^{\alpha + 1}p(x) r.
    \end{align*}
    On the other hand, we have, for all $0 < r \leq 1,$
    \begin{align*}
        \frac{p(x + r)}{p(x)} &= \Big(\frac{\sigma + x}{\sigma + x + r}\Big)^{\alpha + 1} \geq \Big(\frac{\sigma}{\sigma + 1}\Big)^{\alpha + 1},
    \end{align*}
    which shows that the minimal mass property holds with $\theta = 2(1 + 1/\sigma)^{\alpha+1}.$ Indeed,
    \begin{align*}
        \P_{\sX}\{\B(x, r)\} = \int_{(x - r)}^{x + r}p(x)\, dx \geq rp(x + r) \geq \frac{1}{2}\Big(\frac{\theta}{\theta + 1}\Big)^{\alpha +1}rp(x).
    \end{align*}
    Obviously, $\|p\|_{\infty} = \alpha/\sigma.$ Therefore $\P_{\sX} \in \mP(\alpha/\sigma, 2(1 + 1/\sigma)^{\alpha+1}).$\\
    Next, we can check that
    \begin{align*}
        \mT(\P_{\sX}, \P_{\sX}, s) = \big(\alpha/\sigma\big)^{1-s}\int \Big(1 + \frac{x}{\sigma}\Big)^{-(\alpha + 1)(1 - s)}\mathds{1}(x \geq 0)\, dx
    \end{align*}
    is finite if and only if $s < \tfrac{\alpha}{\alpha + 1}.$ Finally, if $\P_{\sX} = \Par(\alpha_{\P})$ and $\Q\mathstrut_{\!\sX} \in \Par(\alpha_{\Q}),$ then the inclusion of both into $\mP(D, \theta),$ with $D, \theta$ defined in the claim, is obvious and we can see that
    \begin{align*}
        \mT(\P_{\sX}, \Q\mathstrut_{\!\sX}, \gamma) = \frac{\alpha_{\Q}\sigma_{\P}^{\gamma}}{\alpha_{\P}^{\gamma}\sigma_{\Q}}\int \frac{(1 + \tfrac{x}{\sigma_{\P}})^{\gamma(\alpha_{\P} + 1)}}{(1 + \tfrac{x}{\sigma_{\Q}})^{\alpha_{\Q} + 1}}\mathds{1}(x \geq 0)\, dx
    \end{align*}
    is finite if and only if $\gamma < \alpha_{\Q}/(\alpha_{\P} + 1).$ This finishes the proof.
\end{proof}

\section{Proofs for Section 4.1}
\label{app.4.1}

\begin{lemma}\label{lem.ratio.bound}
    Consider $\P, \Q \in \mM.$ Then, 
    \begin{enumerate}
        \item [$(i)$] $\mT(\P, \Q, 0) = 1.$
        \item [$(ii)$] $\gamma \mapsto \mT(\P, \Q, \gamma)^\gamma$ is non-decreasing on its domain.
        \item [$(iii)$] $\gamma \mapsto \mT(\P, \Q, \gamma)$ is $\log$-convex on its domain.
    \end{enumerate}
\end{lemma}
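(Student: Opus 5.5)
Parts (i) and (iii) follow directly from the definition $\mT(\P, \Q, \gamma) = \E_{X\sim\Q}[p(X)^{-\gamma}]$. For (i), $p(X)^{0} = 1$ $\Q$-a.s., so $\mT(\P, \Q, 0) = \int q = 1$. For (iii), take $\gamma_0, \gamma_1$ in the domain, that is, with $\mT(\P,\Q,\gamma_i) < \infty$, and $t \in [0,1]$. Write $p^{-((1-t)\gamma_0 + t\gamma_1)} = (p^{-\gamma_0})^{1-t}(p^{-\gamma_1})^{t}$ and apply H\"older's inequality under $\Q$ with conjugate exponents $1/(1-t)$ and $1/t$. This gives
\begin{align*}
    \mT(\P,\Q,(1-t)\gamma_0 + t\gamma_1) \leq \mT(\P,\Q,\gamma_0)^{1-t}\,\mT(\P,\Q,\gamma_1)^{t}.
\end{align*}
This is exactly log-convexity, and it also shows that the domain is an interval containing $0$.

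For (ii) I will work with $g(\gamma) := \log\mT(\P,\Q,\gamma)^{\gamma} = \gamma\,\phi(\gamma)$, where $\phi := \log \mT(\P,\Q,\cdot)$. By (i) and (iii), $\phi$ is convex on the domain with $\phi(0) = 0$. Take $0 \le \gamma_1 < \gamma_2$ in the domain. Then
\begin{align*}
    g(\gamma_2) - g(\gamma_1) = (\gamma_2 - \gamma_1)\phi(\gamma_2) + \gamma_1\big(\phi(\gamma_2) - \phi(\gamma_1)\big),
\end{align*}
so it suffices to show that $\phi(\gamma_2) \ge 0$ and $\phi(\gamma_2) \ge \phi(\gamma_1)$, i.e.\ that $\phi$ is non-negative and non-decreasing. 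Both would follow from convexity once $\phi$ has non-negative right derivative at $0$. Formally this derivative is $-\E_{\Q}[\log p(X)]$. Jensen's inequality gives the cleaner statement $\mT(\P,\Q,\gamma) \ge \exp(-\gamma\E_{\Q}[\log p(X)])$. Combined with convexity of $\phi$ through the origin, the difference quotients $\phi(\gamma)/\gamma$ are non-decreasing and bounded below by $-\E_{\Q}[\log p]$. This yields $\phi \ge 0$ and $\phi$ non-decreasing whenever $\E_{\Q}[\log p(X)] \le 0$, which holds in particular whenever $p \le 1$ on $\supp(\Q)$.

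The main obstacle is the sign of $\E_{\Q}[\log p(X)]$. Convexity alone does not force $\phi \ge 0$ when $p$ exceeds $1$ on a set of large $\Q$-mass. For instance, take $\P = \Q$ uniform on an interval of length $1/10$. Then $\mT = 10^{-\gamma}$, and $\mT^\gamma = 10^{-\gamma^2}$ is decreasing, so (ii) fails as literally worded. I will therefore carry out the argument above, justifying the differentiation at $0$ by monotone convergence on $\{p \le 1\}$ and $\{p > 1\}$ separately. I will state explicitly that the monotonicity in (ii) is obtained under $\E_{\Q}[\log p(X)] \le 0$ (for example $D \le 1$). Without such a condition, the property that does hold unconditionally is the Lyapunov-type monotonicity of $\mT^{1/\gamma}$, which also follows from (iii).
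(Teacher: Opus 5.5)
Your parts (i) and (iii) match the paper's proof: direct evaluation at $\gamma=0$, then H\"older's inequality with conjugate exponents $1/\lambda$ and $1/(1-\lambda)$.

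On (ii) you are right, and the statement is false as written. Your example is a valid counterexample: with $\P=\Q$ uniform on an interval of length $1/10$, you get $\mT(\P,\Q,\gamma)^\gamma=10^{-\gamma^2}$. More generally, any pair with $p>1$ $\Q$-a.s.\ has $\mT(\P,\Q,\gamma)^\gamma<1=\mT(\P,\Q,0)^0$ for $\gamma>0$.

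The paper's own proof of (ii) does not establish the stated property either. Its Jensen step, with the convex map $t\mapsto t^{s/\gamma}$ for $0<\gamma\le s$, yields $\mT(\P,\Q,\gamma)\le\mT(\P,\Q,s)^{\gamma/s}$. That is $\mT(\P,\Q,\gamma)^{1/\gamma}\le\mT(\P,\Q,s)^{1/s}$: the Lyapunov monotonicity of the $L^\gamma(\Q)$-norm of $1/p$, which is exactly the unconditional statement you identify. Item (ii), and the matching sentence in Section 4.1, should therefore read $\mT(\P,\Q,\gamma)^{1/\gamma}$. As far as I can see, none of the upper-bound proofs use (ii), so the correction is harmless.

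Your route to the corrected statement differs mildly from the paper's. You deduce it from (i) and (iii): $\phi=\log\mT(\P,\Q,\cdot)$ is convex with $\phi(0)=0$, so the secant slopes $\phi(\gamma)/\gamma=\log\mT(\P,\Q,\gamma)^{1/\gamma}$ are non-decreasing. The paper gets it directly from Jensen, without log-convexity. Both are fine.

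Your conditional result for $\mT^\gamma$ under $\E_{\Q}[\log p(X)]\le 0$ is also correct, and you can drop the differentiation at $0$ and the monotone-convergence justification:
\begin{itemize}
\item Jensen gives $\phi(\gamma)\ge-\gamma\,\E_{\Q}[\log p(X)]\ge 0$ directly.
\item Convexity through the origin then gives $\phi(\gamma_1)\le(\gamma_1/\gamma_2)\phi(\gamma_2)\le\phi(\gamma_2)$ for $0\le\gamma_1<\gamma_2$.
\item Hence $\gamma\phi(\gamma)$ is a product of non-negative non-decreasing functions, so $\mT^\gamma$ is non-decreasing.
\end{itemize}
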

\begin{proof}
    The first claim follows immediately from
    \begin{align*}
        \mT(\P, \Q, 0) = \int q(x)\, dx = 1.
    \end{align*}
    If $\mT(\P, \Q, \gamma) = \infty$ for all $\gamma > 0,$ then $(ii)$ and $(iii)$ hold trivially. Otherwise, let $X \sim \Q$ and $s > 0$ such that $\mT(\P, \Q, s) < \infty.$ For all $\gamma \in [0, s],$ the function $t \mapsto t^{s/\gamma}$ is convex on $\RR_+.$ By Jensen's inequality, we have 
    \begin{align*}
       \mT(\P, \Q, \gamma) = \big(\E[p(X)^{-\gamma}]^{s/\gamma}\big)^{\gamma/s} \leq \E[p(X)^{-s}]^{\gamma/s} = \mT(\P, \Q, s)^{\gamma/s},
    \end{align*}
    which proves $(ii).$ The last claim $(iii)$ follows from H\"older's inequality. Let $\gamma_1, \gamma_2$ in the domain of $\mT(\P, \Q, \cdot)$ and $\lambda \in [0, 1].$ Then,
    \begin{align*}
        \mT(\P, \Q, \lambda \gamma_1 + (1 - \lambda) \gamma_2) &= \int \bigg(\frac{q(x)}{p(x)^{\gamma_1}}\bigg)^{\lambda}\bigg(\frac{q(x)}{p(x)^{\gamma_2}}\bigg)^{1-\lambda}\, dx \leq \big(\mT(\P, \Q, \gamma_1\big)^{\lambda}\big(\mT(\P, \Q, \gamma_2)\big)^{1 - \lambda},
    \end{align*}
    which yields the claim.
\end{proof}

\begin{lemma}[Lemma 15 in \cite{zamolodtchikov2024transfer}]\label{lem.pseudo.moment}
Let $\Q \in \mM$ with density $q$ and $\rho, \eps > 0.$ If $X \sim \Q$ has a finite generalized moment of order $\rho + \eps,$ then $\gamma^*(\Q, \Q) \geq \rho/(\rho+d).$
\end{lemma}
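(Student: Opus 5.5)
Throughout, I read ``finite generalized moment of order $\rho+\eps$'' as $\E[(1+\|X-x_0\|)^{\rho+\eps}]<\infty$ for some fixed centre $x_0\in\RR^d$. This is the reading I commit to; the paper does not define the term. By the triangle inequality, this condition does not depend on $x_0$. It is equivalent to $\E[\|X\|^{\rho+\eps}]<\infty$, but I will only use the weighted form. Without loss of generality $x_0=0$. Set $w(x):=(1+\|x\|)^{\rho+\eps}$, so the hypothesis is $\int q\,w\,dx<\infty$.

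The plan is to bound the transfer function directly by Hölder's inequality. For $s\in(0,1)$ we have
\begin{align*}
\mT(\Q,\Q,s)=\E_{X\sim\Q}[q(X)^{-s}]=\int_{\{q>0\}} q(x)^{1-s}\,dx .
\end{align*}
I write $q^{1-s}=(q\,w)^{1-s}\,w^{-(1-s)}$ and apply Hölder with the conjugate exponents $1/(1-s)$ and $1/s$. This gives
\begin{align*}
\mT(\Q,\Q,s)\leq \Big(\int q(x)\,w(x)\,dx\Big)^{1-s}\Big(\int_{\RR^d}(1+\|x\|)^{-(\rho+\eps)(1-s)/s}\,dx\Big)^{s}.
\end{align*}
The first factor is finite by hypothesis.

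The second factor is the only real step. In polar coordinates it is finite if and only if $(\rho+\eps)(1-s)/s>d$, which is equivalent to $s<(\rho+\eps)/(\rho+\eps+d)$. The map $t\mapsto t/(t+d)$ is strictly increasing, so $\rho/(\rho+d)<(\rho+\eps)/(\rho+\eps+d)$. Hence every $s\in[0,\rho/(\rho+d)]$ gives $\mT(\Q,\Q,s)<\infty$; the case $s=0$ is trivial, since $\mT(\Q,\Q,0)=1$ by Lemma \ref{lem.ratio.bound}. Taking the supremum in the definition of the integrability index yields $\gamma^*(\Q,\Q)\geq(\rho+\eps)/(\rho+\eps+d)\geq\rho/(\rho+d)$, which is the claim.

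The only delicate point is the strict $\eps$-margin. With $\eps=0$ the borderline exponent $s=\rho/(\rho+d)$ would give exactly $(\rho)(1-s)/s=d$, and the weight integral would diverge logarithmically. The $\eps$ keeps us strictly inside the integrable range, so no uniform or tail estimate on $q$ is needed. The argument is unchanged for any other weight $w\geq 1$ with $\int w^{-(1-s)/s}\,dx<\infty$. So if ``generalized moment'' is meant with a different norm or a radial function comparable to $\|x\|^{\rho+\eps}$, only this last integrability check needs to be redone.
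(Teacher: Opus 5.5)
Your proof is correct and is essentially the paper's argument: the paper makes the same H\"older split $q^{\tau}=(wq)^{\tau}w^{-\tau}$, with weight $1+\|x\|^{\rho+\eps}$ (comparable to your $(1+\|x\|)^{\rho+\eps}$), at the single exponent $\tau=d/(\rho+d)$, i.e.\ $s=\rho/(\rho+d)$, and uses $\eps$ to make the weight integral converge there. Running it for general $s$, as you do, gives the slightly sharper bound $\gamma^*(\Q,\Q)\geq(\rho+\eps)/(\rho+\eps+d)$, and it shows that $\eps$ is needed only for $\mT(\Q,\Q,\cdot)$ to be finite at the endpoint $\rho/(\rho+d)$ itself; since $\gamma^*$ is a supremum, the stated bound would already follow with $\eps=0$.
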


\begin{proof}
Let $\tau = d/(\rho + d).$ By H\"{o}lder's inequality, we have
\begin{align*}
& \int  q(x)^{\tau} \, dx 
   = \int (1 + \|x\|^{\rho + \eps})^{\tau} q(x)^{\tau} (1 + \|x\|^{\rho + \eps})^{-\tau} \, dx\\
&\leq \bigg( \int \bigl( (1 + \|x\|^{\rho + \eps})^{\tau} q(x)^{\tau} \bigr)^{1/\tau} \, dx \bigg)^{\tau} \cdot
           \biggl( \int (1 + \|x\|^{\rho + \eps})^{- \tau / (1 - \tau)} \, dx \biggr)^{1-\tau}
\\
& \leq (1 + M)^{\tau} \biggl( \int (1 + \|x\|^{\rho + \eps})^{- \tau / (1 - \tau)} \, dx \biggr)^{1-\tau},
\end{align*}
where $M := \int\|x\|^{\rho + \eps}\, \Q(dx).$ Since $\tau/(1 - \tau) = d/\rho,$ we have $\tau(\rho + \eps) / (1 - \tau) > d$ and consequently
\begin{align*}
\int (1 + \|x\|^{\rho + \eps})^{- \tau / (1 - \tau)} \, dx < \infty,
\end{align*}
which proves the claim.
\end{proof}

\begin{proof}[\textit{\textbf{Proof of Lemma \ref{lem.renyi.to.gamma}}}]
    Let $r' = (r - 1)/r$ be the conjugate of $r,$ and $0 \leq s < \gamma^*(\P, \P).$ For $\gamma \geq 0,$ it follows from H\"older's inequality that
    \begin{align*}
        \int \frac{q(x)}{p(x)^{\gamma}}\, dx &= \int \frac{q(x)}{p(x)}p(x)^{1 - \gamma}\,dx\\
        &\leq \bigg(\int \Big(\frac{q(x)}{p(x)}\Big)^{r}\,d\P(x)\bigg)^{1/r}\bigg(\int p(x)^{-\gamma r'}\, d\P(x)\bigg)^{1/r'}\\
        &= \exp\Big(\frac{r-1}{r}D_r(\Q \Vert \P)\Big)\bigg(\int p(x)^{1-\frac{\gamma r}{r-1}}\, dx\bigg)^{(r-1)/r},
    \end{align*}
    and the latter is finite for any $\gamma < \gamma^*(\P, \P)(r-1)/r,$ which shows that $\gamma^*(\P, \Q) \geq \gamma^*(\P, \P)(r-1)/r.$
\end{proof}
\end{document}